\documentclass[12pt,a4paper]{amsart}
\usepackage{ulem}
\usepackage{fancyhdr,mathtools}
\usepackage{appendix}
\usepackage{amssymb,amscd,amsxtra,calc}
\usepackage{mathrsfs}
\usepackage{multirow}
\usepackage[all]{xy}
\usepackage{longtable}
\usepackage[colorlinks,linkcolor=blue,anchorcolor=red,citecolor=green,linktocpage]{hyperref}
\usepackage{cite}	
\usepackage{mathabx,epsfig}
\def\acts{\ \rotatebox[origin=c]{-90}{$\circlearrowright$}\ }
\def\racts{\ \rotatebox[origin=c]{90}{$\circlearrowleft$}\ }

\theoremstyle{plain}
    \newtheorem{thm}{Theorem}[section]
    
       \newtheorem{lemma}[thm]{Lemma}
           \newtheorem{theorem}[thm]{Theorem}
            
    \newtheorem{proposition}[thm]{Proposition}
\newtheorem{proposition-definition}[thm]{Proposition-Definition}

\theoremstyle{definition}

    \newtheorem{definition}[thm]{Definition}
        \newtheorem{example}[thm]{Example}
     \newtheorem{corollary}[thm]{Corollary}
     
     \newtheorem{prob}[thm]{Problem}
       \newtheorem{conjecture}[thm]{Conjecture}

    \newtheorem*{notation*}{Notation and Terminology}
    \newtheorem{remark}[thm]{Remark}
    
\theoremstyle{remark}

\newcommand{\codim}{\textup{codim}}

\newcommand{\sing}{\mathrm{sing}}
\newcommand{\pr}{\textup{pr}}

\newcommand{\id}{\textup{id}}

\newcommand{\Sym}{\textup{Sym}}

\newcommand{\reg}{\mathrm{reg}}

\begin{document}

\title[Endomorphism]
{Positivity of compact K\"ahler varieties \\ admitting an int-amplified endomorphism}

\author{Shin-ichi Matsumura and Guolei Zhong}

\address{
\textsc{Mathematical Institute 
$\&$ Division for the Establishment of Frontier Science of Organization for Advanced Studies, 
Tohoku University}\endgraf
	\textsc{6-3, Aramaki Aza-Aoba, Aoba-ku, Sendai 980-8578, Japan
}}
\email{mshinichi-math@tohoku.ac.jp}
\email{mshinichi0@gmail.com}

\address{
\textsc{School of Mathematical Sciences, Shanghai Key Laboratory of PMMP}\endgraf 
\textsc{East China Normal University, Shanghai 200241, China}
}
\email{glzhong@math.ecnu.edu.cn}

\thanks{S.\,M.\,was supported by the JST FOREST Program ($\sharp$PMJFR2368).
G.\,Z.\,was supported by the Science and Technology Commission of Shanghai Municipality (No. 22DZ2229014), and the National Natural Science Foundation of China. }

\renewcommand{\subjclassname}{
\textup{2020} Mathematics Subject Classification}
\subjclass[2020]{Primary 32J25;  Secondary 08A35,  32H50, 
58A30.}

\keywords{int-amplified endomorphism, pseudo-effective tangent bundle, rationally connected variety}

\begin{abstract}
We study compact K\"ahler varieties admitting int-amplified endomorphisms
from the viewpoint of positivity of tangent sheaves.
Our main result shows that the tangent sheaf of such a variety is weakly
positively curved; in particular, it is pseudo-effective in a strong sense.
This establishes a new link between complex dynamics and the positivity theory
of tangent sheaves, and provides an alternative to equivariant MMP techniques
in the compact K\"ahler setting.

As an application, we prove a K\"ahler analogue of Yoshikawa's structure
theorem: for a compact K\"ahler klt variety admitting an int-amplified endomorphism, after an equivariant quasi-\'etale cover, it admits an equivariant flat MRC fibration with irreducible fibres onto a complex torus; in the smooth case, the fibration is smooth whose periodic fibres are of Fano type. 
To this end, we prove the existence of minimal models in the case of numerical
dimension zero canonical divisors for non-projective K\"ahler varieties. 
We further study rationally connected manifolds with pseudo-effective tangent
bundle, focusing on their relation to Fano-type properties, almost
homogeneity, and fundamental groups.
In the proof, we also show that every surjective
endomorphism of a compact klt K\"ahler variety lifts to a suitable maximally
quasi-\'etale cover.
\end{abstract}

\maketitle

\section{Introduction}\label{s:intro}

\subsection{Positivity of tangent sheaves and int-amplified endomorphisms}\label{intro-1}

A central problem in complex dynamics is to understand the geometry of compact
complex varieties admitting non-isomorphic surjective endomorphisms.  In the
projective setting,  remarkable progress has been made in the study of int-amplified
endomorphisms.  
One of the main reasons is that the MMP is well-established and can even 
be run equivariantly with respect to the endomorphism, leading to a precise
structure theory (see \cite{BCHM10} and \cite{MZ23-survey}).  
However, the compact K\"ahler setting presents a substantially different picture,
since the K\"ahler MMP is not yet available in the general case.  

The purpose of this paper is to propose an alternative
mechanism for studying int-amplified endomorphisms in the compact K\"ahler
setting, one which does not rely on the equivariant MMP.  The guiding idea is
that the existence of an int-amplified endomorphism should force positivity of
the tangent sheaf.  This provides a new perspective on the study of dynamical
systems, linking complex dynamics with the positivity theory of tangent
sheaves.

The \textit{dynamical degrees}, extensively studied by Dinh and Sibony, are powerful tools for the cohomological characterization of dominant meromorphic maps (in particular, endomorphisms). 
Let \(f\colon X\dashrightarrow X\) be a dominant meromorphic  self-map 
of a compact K\"ahler manifold $X$ with a fixed K\"ahler form \(\omega\). 
For each integer \(0\leq p\leq n\coloneqq\dim(X)\), the \textit{\(p\)-th dynamical degree} of \(f\) is defined by
\[
d_p(f)\coloneqq\lim_{m\to\infty}\|(f^m)^*(\omega^p)\|^{1/m}
=\lim_{m\to\infty}\|(f^m)^*\colon H^{p,p}(X,\mathbb{C})\to H^{p,p}(X,\mathbb{C})\|^{1/m}.
\]
Here, \(\|T\|\coloneqq\left<T,\omega^{n-p}\right>\) denotes the mass 
of a positive closed \((p,p)\)-current \(T\) on $X$. 
The above limits exist,  are independent of the choice of \(\omega\), and are bimeromorphic invariants; see \cite{DS04-Ens,DS05}.
By the log-concavity of dynamical degrees, the case  \(d_n(f)>d_p(f)\) for all \(p\leq n-1\) is one of the elementary situations in complex dynamics.
For surjective endomorphisms, this condition is also equivalent to $f$ being  \textit{int-amplified}, 
that is, there exists a K\"ahler class \([\omega]\) such that \(f^*[\omega]-[\omega]\) is again K\"ahler;
see \cite[Proposition 3.7]{MZ23-survey} and Proposition \ref{p:int}.

Suppose first that $X$ is a mildly singular projective variety admitting an int-amplified endomorphism \(f\). 
Meng proved in \cite[Theorem 1.7]{Men20} that $X$ admits an everywhere-defined MRC fibration $X \to Y$ 
onto a \(Q\)-abelian variety $Y$, 
that is, a quasi-\'etale quotient of an abelian variety. 
Furthermore, according to \cite[Theorem 1.10]{Men20}, there exists a sequence of divisorial contractions, flips, and Fano contractions 
$$\xymatrix{
        f \acts X\coloneqq X_0 \ar@{-->}[r]&X_1\racts f_1\ar@{-->}[r]&X_2\racts f_2\ar@{-->}[r]&\cdots\ar@{-->}[r]&X_N\racts f_N.
        }$$
such that, after iterates, each $X_{i}$ admits an int-amplified endomorphism \(f_i\) descending from \(f\) and 
the final outcome $X_{N}$ is a \(Q\)-abelian variety. 
As we discussed in the beginning, based on this equivariant MMP, int-amplified endomorphisms have been studied extensively
in both algebraic and arithmetic dynamics. 
However, the situation is markedly different in the compact K\"ahler setting, where the K\"ahler MMP is still not available in the generality needed for such arguments.

On the other hand, a parallel phenomenon has been established for projective manifolds 
with \textit{pseudo-effective} tangent bundle in \cite{HIM22, Mat23}. 
This naturally leads to the following question:
\textit{is there a relation between int-amplified endomorphisms and positivity of tangent sheaves?}

Our first main result gives an affirmative answer: we show
that an int-amplified endomorphism forces a strong form of pseudo-effectivity
of the tangent sheaf. This provides a new bridge between complex dynamics
and the positivity theory of tangent sheaves, and gives a substitute for
equivariant MMP techniques in several questions in compact K\"ahler geometry:
\begin{center}
\textit{Endomorphisms $\Longrightarrow$
positivity of tangent sheaves $\Longrightarrow$ structure theorems.}
\end{center}

\begin{theorem}[{cf.~Theorem \ref{t:pos-refine}}]\label{t:pos}
Let $X$ be a normal compact K\"ahler variety admitting an int-amplified endomorphism. 
Then the tangent sheaf \(T_X\coloneqq\Omega_X^\vee\) is weakly positively curved. 
In particular, \(T_X\) is pseudo-effective in the sense of Definition \ref{df-pos}.
\end{theorem}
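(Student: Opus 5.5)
The approach is to use the endomorphism $f$ to pull back metrics on $T_X$ and let the expansion of $f$ absorb the negativity. Recall that $T_X$ is weakly positively curved when, for a fixed K\"ahler form $\omega$ and every $\varepsilon>0$, there is a singular Hermitian metric $h_\varepsilon$ on $T_X$ over the regular locus with $\sqrt{-1}\Theta_{h_\varepsilon}\succeq -\varepsilon\,\omega\otimes\id$ in the Griffiths sense, with suitable control of multiplier-ideal or Lelong-number behaviour.

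\textbf{Step 1: reduce to a quasi-\'etale situation.}
\begin{itemize}
\item Since $f$ is int-amplified, it is finite and surjective. By Proposition \ref{p:int}, $\deg f>1$.
\item The branch divisor of $f$ is controlled: the standard argument via $K_X = f^*K_X + R_f$ shows that $f^{-1}$ of the totally invariant part of the ramification is totally invariant.
\item On $X_{\reg}\setminus f^{-1}(\mathrm{Sing}X)$, the differential $df\colon T_X\to f^*T_X$ is an injective sheaf map. It is an isomorphism off the ramification divisor $R_f$.
\end{itemize}

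\textbf{Step 2: iterate pullbacks of a fixed metric.}
\begin{itemize}
\item Fix a smooth Hermitian metric $h_0$ on $T_X$ over $X_{\reg}$, for instance induced from $\omega$ via a local embedding. Its curvature satisfies $\Theta_{h_0}\succeq -C\omega\otimes\id$.
\item Define $h_m := (f^m)^*h_0$ restricted to $T_X$ via $d(f^m)$, that is $|v|_{h_m}=|d f^m(v)|_{h_0}$. Since $d f^m$ is an injective holomorphic map $T_X\to (f^m)^*T_X$, the metric $h_m$ is a singular metric on $T_X$ whose curvature dominates the pullback: $\sqrt{-1}\Theta_{h_m}\succeq -C (f^m)^*\omega\otimes\id$. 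The singularities occur only along the ramification of $f^m$, where the metric degenerates (it becomes smaller), and this is the correct sign for positivity of $T_X$.
\end{itemize}

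\textbf{Step 3: renormalize using int-amplifiedness.}
\begin{itemize}
\item The bound from Step 2 grows with $m$, so the metric must be rescaled. Choose $[\omega]$ with $f^*[\omega]-[\omega]$ K\"ahler. Then the eigenvalues of $f^*$ on $H^{1,1}$ all have modulus $>1$, and $(f^{-m})_*$, or dually the normalized $\tfrac{1}{\deg f^m}(f^m)_*$ acting on classes of curvature, contracts.
\item The cleaner route is to argue dually on $\Omega_X$ and use the trace. For a local section $\sigma$ of $\Omega_X^{[1]}$, the map $\sigma\mapsto \frac{1}{\deg f^m}\mathrm{Tr}_{f^m}(\sigma)$, or equivalently $(f^m)_*$ of forms, gives metrics on $T_X$. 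Their negative curvature part is bounded by $(f^m)_*\omega/\deg(f^m)$ in the sense of currents.
\item The int-amplified hypothesis gives $d_1(f)<d_n(f)=\deg f$, so the mass of $(f^m)_*\omega/\deg f^m$ equals $\langle\omega,(f^m)^*\omega^{n-1}\rangle/\deg f^m$. This grows like $(d_{n-1}(f)/d_n(f))^m\to 0$.
\item Using a Demailly regularization to replace this small-mass closed positive current by an error of $\varepsilon\omega$, with controlled Lelong numbers, yields $h_\varepsilon$ with curvature $\succeq-\varepsilon\omega$.
\end{itemize}

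\textbf{Step 4: deduce pseudo-effectivity.} The pseudo-effectivity of $T_X$ then follows from weak positivity by the comparison of Definition \ref{df-pos}, through $\mathcal O_{\mathbb P(T_X)}(1)$ being pseudo-effective as a limit.

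\textbf{Main obstacle.} The hard step is Step 3: converting the mass decay $(d_{n-1}/d_n)^m$, which is a cohomological bound, into a pointwise Griffiths lower bound $-\varepsilon\omega$ on a metric on $T_X$. Pushforward of metrics along $f^m$ is not canonical. I would first try working on $\mathcal O_{\mathbb P(T_X)}(1)$ over $\mathbb P(T_X)$, where $f$ induces a meromorphic map. There the problem becomes the analogous one for a line bundle, where psh-envelope and regularization arguments apply. I would also control the singular locus of $X$ by working on a resolution and pushing forward.
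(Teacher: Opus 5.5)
There is a genuine gap. No step of your argument produces a metric with $\sqrt{-1}\Theta\geq-\varepsilon\omega\otimes\id$, and both concrete mechanisms you propose point the wrong way.

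In Step 2, $|v|_{h_m}=|df^m(v)|_{h_0}$ is the metric induced on the \emph{subsheaf} $T_X\hookrightarrow(f^m)^*T_X$. It degenerates along the ramification of $f^m$, and that is the bad sign, not the good one. Already for $z\mapsto z^2$ with $h_0$ flat you get $|\partial_z|_{h_m}=2|z|$, so $\log|dz|_{h_m^\vee}\to+\infty$ at $0$ and the curvature acquires a negative Dirac mass. Hence the bound $\sqrt{-1}\Theta_{h_m}\succeq-C(f^m)^*\omega\otimes\id$ fails. Even if it held, it would worsen with $m$, and rescaling a metric by a constant never changes its curvature.

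In Step 3 you never specify the metric. If it is the trace metric $|e|^2(p)=\frac{1}{\deg f^m}\sum_{q\in f^{-m}(p)}|(f^m)^*e|^2_{h_0^\vee}(q)$, the factor $1/\deg f^m$ is again a constant and does nothing to the curvature. Convexity of $\log\sum_i e^{u_i}$ then only gives a lower bound of type $-C(f^m)_*\omega$, since the weights $e^{u_i}/\sum_j e^{u_j}$ may concentrate on one branch, and the mass of $(f^m)_*\omega$ does not tend to zero. Moreover, a positive current of small mass is not pointwise dominated by $\varepsilon\omega$, and no Demailly regularization changes that.

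Step 4 also needs correcting. Pseudo-effectivity of $\mathcal{O}_{\mathbb{P}(T_X)}(1)$ is strictly weaker than Definition \ref{df-pos}; the implication you want is simply $\Sym^m h_\varepsilon$ with $\varepsilon<1/m$. Finally, on singular $X$ a metric induced from a local embedding is a subbundle metric whose curvature is unbounded below near $X_{\sing}$; you need an admissible metric as in \cite[Lemma 3.3]{Ou22}.

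The missing idea is to transport the fixed metric in the opposite direction, and to spend the expansion of $f$ on the source \emph{before} tracing down.
\begin{enumerate}
\item Start from $g$ on $T_X|_{X_0}$ with $\sqrt{-1}\Theta_g\geq-C\omega\otimes\id$. Give the \emph{target} of the generically isomorphic injection $T_X\hookrightarrow(f^s)^{[*]}T_X$ the induced quotient-type metric $g_s$. It blows up along the ramification, which is the good sign, and it still satisfies $\sqrt{-1}\Theta_{g_s}\geq-C\omega\otimes\id$ with $\omega$ on the source, not $-C(f^s)^*\omega$.
\item Int-amplifiedness enters only through a choice of $r>1$ with $f^*[\omega]-r[\omega]$ K\"ahler. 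Then $(f^s)^*\omega-r^s\omega$ is a semipositive form plus $dd^cG_s$ for a global smooth $G_s$. Twisting $g_s$ by the weight coming from $\varepsilon G_s$, with $\varepsilon r^s>C$, converts the bound into $-\varepsilon(f^s)^*\omega$.
\item Set $|e|^2(p)=\sum_{q\in f^{-s}(p)}|(f^s)^*e|^2(q)$. Because the bound is pulled back from the target, the factor $e^{2\varepsilon(f^s)^*\phi}$ (with $\phi$ a local potential of $\omega$) is constant on each fibre and comes out of the sum. The trace lemma \cite[Proposition 1.13]{Dem85} then shows that $\log|e|+\varepsilon\phi$ is psh.
\end{enumerate}
This is a purely cohomological twist on the source; no mass estimates via $d_{n-1}(f)/d_n(f)$ and no regularization are needed.
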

Theorem \ref{t:pos} gives a genuinely new positivity result, 
even in some of the most basic cases like 
Fano manifolds of Picard number one. 
Besides, it can also be used as a substitute for the equivariant MMP in the compact K\"ahler setting. 

The pseudo-effectivity used in this paper 
is stronger than that of the tautological line bundle 
\(\mathcal{O}_{\mathbb{P}(T_X)}(1)\) on the associated Grothendieck projectivization; 
see Section \ref{s:ex}.
Moreover, the determinant of a pseudo-effective sheaf in this sense is again pseudo-effective; see \cite[Proposition 3.4]{Mat23}. 
Thus, Theorem \ref{t:pos} 
strengthens \cite[Theorem C]{BdFF12}, 
\cite[Theorem 1.3]{Zho21-Asian} and \cite[Theorem 1.5]{Men20} on the pseudo-effectivity of the anticanonical divisor \(-K_X\) (cf.~\cite[Theorem 6.2]{MZ22} for the non-vanishing results and Conjecture \ref{Conj-big}).

It is natural to ask whether other positivity properties, 
such as singular positive curvature or almost nefness of the tangent sheaf, 
also hold under the assumptions of Theorem \ref{t:pos}. 
However, there are counterexamples to these expectations; see Remark \ref{r:pos-cur}. 
This suggests that pseudo-effectivity is a natural and suitable positivity notion in this setting.

\subsection{Applications of Theorem \ref{t:pos}}\label{intro-2}
We next discuss several applications of Theorem \ref{t:pos} (see Theorem \ref{t:structure} and Corollaries \ref{c:simply} and \ref{Cor:linear}). 
We also refer the reader to Proposition \ref{p:Q-torus} for a dynamical characterization of \(Q\)-complex torus by applying Theorem \ref{t:structure}. 

The first application is the  K\"ahler analogue of Yoshikawa's
structure theorem.

\begin{theorem}\label{t:structure}
Let \(f\colon X\to X\) be an int-amplified endomorphism of a compact klt K\"ahler variety \(X\).
Then, up to an \(f\)-equivariant quasi-\'etale cover, there exists a flat holomorphic projective MRC fibration \(\pi\colon X\to T\) onto a complex torus \(T\), with every fibre irreducible such that \(\dim T=\widehat{q}(X)\), the augmented irregularity.
If  \(X\) is smooth, then \(\pi\)  is a smooth 
fibration and the \(f\)-periodic fibres  are of Fano type.
\end{theorem}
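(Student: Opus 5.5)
Our plan is to combine Theorem~\ref{t:pos} with the structure theory for compact K\"ahler klt varieties whose tangent sheaf is weakly positively curved, as developed in \cite{HIM22, Mat23} and their K\"ahler extensions. The argument has three parts: a reduction step, the construction of the fibration, and the analysis of its fibres.

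\textbf{Reduction.} We first pass to a quasi-\'etale cover to which the endomorphism lifts. Since \(X\) is klt, we take a maximally quasi-\'etale cover \(\widetilde X\to X\), so that every quasi-\'etale cover of \(\widetilde X\) that is \'etale in codimension one is in fact \'etale. We then show that \(f\) lifts to \(\widetilde f\colon\widetilde X\to\widetilde X\), possibly after replacing \(f\) by an iterate. The mechanism is as follows. Pulling back the Galois closure along \(f\) yields another quasi-\'etale cover. Finiteness of the relevant index in the \'etale fundamental group of the regular locus forces the pullback cover to coincide with the original one up to iterate. Int-amplifiedness is preserved under finite covers, since the pullback of a K\"ahler class stays K\"ahler. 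Moreover, \(\widehat q(X)=q(\widetilde X)\) after a further \'etale cover realising the augmented irregularity; this cover is also made equivariant by the same lifting argument. From now on we assume \(q(X)=\widehat q(X)\) and \(X\) is maximally quasi-\'etale.

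\textbf{Construction of \(\pi\).} By Theorem~\ref{t:pos}, \(T_X\) is weakly positively curved, so \(-K_X\) is pseudo-effective. We then use the Albanese map \(\alpha\colon X\to \mathrm{Alb}(X)\). The idea is that a weakly positively curved tangent sheaf forces \(\alpha\) to be surjective with connected fibres and flat. Concretely, the morphism \(\alpha^*\Omega_{\mathrm{Alb}}\to\Omega_X^{[1]}\) dualises to a map \(T_X\to\alpha^*T_{\mathrm{Alb}}\cong\mathcal O^{\oplus q}\). The quotient of a weakly positively curved sheaf with numerically trivial determinant is handled via \cite[Proposition 3.4]{Mat23}, and we conclude that this map is surjective in codimension one. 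Hence \(\alpha\) is surjective and has no multiple fibres in codimension one.

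To get flatness and irreducibility of every fibre, we use the equivariance: \(f\) descends to an int-amplified endomorphism \(g\) of the torus \(T=\mathrm{Alb}(X)\). The locus where fibres are non-reduced, reducible, or of jumping dimension is closed and totally invariant under \(g^{-1}\). An int-amplified endomorphism of a torus has no nonempty proper totally invariant closed subsets other than ones that are impossible here, since \(g\) is \'etale and the degree comparison forces the preimage to be strictly larger. Hence this locus is empty.

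Next we show that the general fibre \(F\) is rationally connected. The fibre has \(T_F\) weakly positively curved, being a quotient of \(T_X|_F\) via the relative tangent sequence, and its augmented irregularity vanishes by maximality of \(q\). The key input is that a klt K\"ahler variety with pseudo-effective tangent sheaf and \(\widehat q=0\) is rationally connected. We would prove this by showing \(K_F\) is not pseudo-effective unless \(F\) is a point, using the int-amplified endomorphism restricted to a periodic fibre: if \(K_F\equiv0\), then after a quasi-\'etale cover Beauville--Bogomolov gives \(\widehat q>0\) or a non-torus factor admitting no int-amplified map. This is where the minimal model result for numerical dimension zero announced in the abstract enters. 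Rational connectedness of fibres also makes \(\pi\) projective, since \(H^2(F,\mathcal O)=0\). That \(\pi\) is the MRC fibration follows because \(T\) contains no rational curves.

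\textbf{Smooth case, and the main obstacle.} If \(X\) is smooth, flatness together with reduced fibres plus the equivariant argument gives that \(\pi\) is smooth. For an \(f\)-periodic fibre \(F\), the restriction \(f^k|_F\) is an int-amplified endomorphism of a smooth rationally connected projective variety. By \cite[Theorem 1.10]{Men20}, the projective equivariant MMP and known results show \(F\) is of Fano type; alternatively, one can use Theorem~\ref{t:pos} with \(-K_F\) big, which follows from \cite[Theorem 6.2]{MZ22}. We expect the hardest step to be the rational connectedness of the fibres, that is, excluding \(K_F\) pseudo-effective without the K\"ahler MMP. We would attack it by first establishing existence of minimal models when \(\nu(K)=0\) and then invoking the decomposition theorem.
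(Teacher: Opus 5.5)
There is a genuine gap at the step you yourself flag as hardest: showing that the Albanese fibres are rationally connected, or equivalently that the Albanese map of the cover is the MRC fibration. Your sketch of this step fails at four points.

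First, $T_F$ is not a quotient of $T_X|_F$. On the smooth locus of $\pi$, the relative tangent sequence $0\to T_{X/T}\to T_X\to\pi^*T_T\to 0$ exhibits $T_F$ as a \emph{subsheaf} of $T_X|_F$. Pseudo-effectivity passes to quotients, not to subsheaves, so you have not shown that $T_F$ is pseudo-effective.

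Second, $\widehat q(F)=0$ does not follow from $q(X)=\widehat q(X)$, because a quasi-\'etale cover of a fibre need not be induced by a quasi-\'etale cover of $X$. In the paper the implication runs the other way: rational connectedness of the fibres is used to obtain $q=\widehat q$ (cf.\ the proof of Theorem~\ref{t:qe-mrc}(1)).

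Third, excluding $K_F$ pseudo-effective only makes $F$ uniruled. The real content is that the MRC quotient is a point, and your sketch never treats a non-uniruled, positive-dimensional MRC base. Once $T_F$ is pseudo-effective, $K_F$ pseudo-effective already forces $K_F\equiv 0$ as in Proposition~\ref{p:Q-torus}. So the $\nu=0$ minimal model theorem is not what is needed for $F$; it is needed to control the non-uniruled MRC base.

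Fourth, the endomorphism acts only on periodic fibres, whereas you need the conclusion for the general fibre. In the klt setting that transfer requires a further argument.

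The paper avoids fibrewise arguments altogether. Theorem~\ref{t:pos} makes $T_X$ pseudo-effective, and Theorem~\ref{t:psef-strc} then gives, after a maximally quasi-\'etale cover, a holomorphic MRC fibration onto a torus. That theorem is \cite[Theorem 1.6]{MZ-TJM}, made unconditional by Theorem~\ref{t:nd0}. Only afterwards does the paper lift $f$ via Theorem~\ref{thm-lift}, take the Stein factorization, and run the $g^{-1}$-invariant bad-locus argument for flatness and irreducibility; that last part your proposal does match. To keep your fibrewise route, you would have to prove that $F$ is klt with pseudo-effective tangent sheaf and $\widehat q(F)=0$, and then invoke Proposition~\ref{prop-irreg}; neither hypothesis is established.

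Three smaller points:
\begin{enumerate}
\item Lifting only after replacing $f$ by an iterate proves less than the asserted $f$-equivariance. Theorem~\ref{thm-lift} needs no iterate, because it uses the $\varphi$-stable finite-index subgroup $\bigcap_{i\ge 0}\varphi^{-i}(H_0)$.
\item Realising $\widehat q(X)$ by a cover requires the bound $\widehat q(X)\le\dim X$ from Proposition~\ref{prop-irreg}.
\item Bigness of $-K_F$ does not give Fano type. For periodic fibres the paper uses \cite[Corollary 1.4]{Yos21}.
\end{enumerate}
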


Theorem \ref{t:structure} extends Yoshikawa's result \cite[Theorem 1.3]{Yos21} 
(cf.~Theorem \ref{t:qe-mrc}) from projective varieties to compact K\"ahler varieties. 
Yoshikawa's proof is based on the equivariant MMP, while our proof, by contrast, proceeds through the positivity of
the tangent sheaf obtained in Theorem \ref{t:pos}. 

\begin{remark}[Strategy towards the proof of Theorem \ref{t:structure}]
First, as a key birational inputs,  motivated by a recent breakthrough of the K\"ahler MMP \cite{HX26}, we prove the existence of minimal models for compact K\"ahler klt varieties with numerical dimension zero canonical divisor (see Theorem \ref{t:nd0}), extending \cite{Gon11}.
In this way, we give a solution to \cite[Conjecture 1.7]{MZ-TJM} and renders the structure theorem for pseudo-effective tangent sheaves \cite[Theorem 1.6]{MZ-TJM} unconditionally (see Theorem \ref{t:psef-strc}; cf.~\cite{MZ-TJM}).   
Combining that with Theorem \ref{t:pos} yields a holomorphic MRC fibration onto a complex torus; the dynamics then provide equivariance (see Theorem \ref{thm-lift}),  irreducibility of every fibre, and flatness.
\end{remark}

In singular klt case, Theorem   \ref{t:structure} does not assert that periodic fibres are of Fano type. 
Even when \(X\) is maximally quasi-\'etale, it is unknown whether a general fibre is itself maximally quasi-\'etale. Nevertheless, the following corollary is a direct consequence of Theorem  \ref{t:structure}.
It relates the geometry of a compact klt K\"ahler variety admitting an int-amplified endomorphism to its fundamental group.

\begin{corollary}\label{c:simply}
Let \(f\colon X\to X\) be an int-amplified endomorphism of a compact klt K\"ahler variety.
Then the following statements are equivalent.
\begin{enumerate}
\item The variety \(X\) is of Fano type. 
\item The variety \(X\) has vanishing augmented irregularity.
\item There is an \(f\)-equivariant maximally quasi-\'etale cover \(\widehat{X}\to X\) with \(\pi_1(\widehat{X})\) finite. 
\end{enumerate}
\end{corollary}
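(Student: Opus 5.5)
The plan is to prove the cycle of implications (1)$\Rightarrow$(2)$\Rightarrow$(3)$\Rightarrow$(1), using Theorem \ref{t:structure} for the main step and standard facts on klt varieties for the rest.

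For (1)$\Rightarrow$(2), suppose $X$ is of Fano type. Then $X$ is projective and rationally connected. Every quasi-\'etale cover $X'\to X$ is again of Fano type: pull back a klt pair $(X,\Delta)$ with $-(K_X+\Delta)$ big and nef. Hence each such $X'$ is rationally connected and satisfies $h^1(X',\mathcal{O}_{X'})=0$. Taking the supremum over all quasi-\'etale covers gives $\widehat{q}(X)=0$.

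For (2)$\Rightarrow$(3), assume $\widehat{q}(X)=0$. By Theorem \ref{t:structure}, after an $f$-equivariant quasi-\'etale cover, which I take to be a maximally quasi-\'etale cover $\widehat{X}$ (using the lifting result Theorem \ref{thm-lift}), there is an MRC fibration $\widehat{X}\to T$ with $\dim T=\widehat{q}(X)=0$. So $\widehat{X}$ is rationally connected. A rationally connected compact K\"ahler klt variety is projective, since $h^{2,0}=0$. Its resolution is simply connected, and for klt varieties $\pi_1$ of a resolution surjects onto $\pi_1(\widehat{X})$ (Takayama), so $\pi_1(\widehat{X})=1$, which is finite. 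One point to check here is that the cover given by Theorem \ref{t:structure} can be chosen maximally quasi-\'etale and still $f$-equivariant. I would handle this by first passing to the equivariant maximally quasi-\'etale cover from Theorem \ref{thm-lift}, and then noting that any further quasi-\'etale cover of it is \'etale, so the conclusion survives.

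For (3)$\Rightarrow$(1), let $\widehat{X}$ have finite $\pi_1$. Since $\widehat{X}$ is maximally quasi-\'etale, the universal cover $\widetilde{X}\to\widehat{X}$ is a finite \'etale cover, and it carries a lift of the endomorphism (a lift exists after possibly replacing $f$ by an iterate). Then $\widehat{q}(\widetilde{X})=q(\widetilde{X})=0$, because $\pi_1(\widetilde X)$ is trivial and every quasi-\'etale cover of $\widetilde X$ is \'etale, hence trivial. Apply Theorem \ref{t:structure} to $\widetilde{X}$: its MRC base is a point, so $\widetilde{X}$ is rationally connected, hence projective.

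The remaining task is to show that a projective rationally connected klt variety with an int-amplified endomorphism is of Fano type. Here I invoke Yoshikawa's projective result \cite[Theorem 1.3]{Yos21} (cf.~Theorem \ref{t:qe-mrc}) or Meng's equivariant MMP \cite{Men20}: when $\widehat{q}=0$, these give that the variety is of Fano type. Finally, Fano type descends along the finite covers $\widetilde{X}\to\widehat{X}\to X$, since these are quasi-\'etale and Fano type is preserved under finite quotients of klt pairs.

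The main obstacle I expect is this last step: deducing Fano type from rational connectedness together with the endomorphism in the singular setting, and justifying the descent of Fano type along quasi-\'etale covers. If the projective citation does not apply directly, I would instead run Meng's equivariant MMP on $\widetilde{X}$. Its output would be a $Q$-abelian variety of dimension $0$, that is, a point, and Fano type then follows from \cite[Theorem 1.10]{Men20}.
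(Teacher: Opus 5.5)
Your argument is correct and follows the paper's overall structure: the cycle (1)$\Rightarrow$(2)$\Rightarrow$(3)$\Rightarrow$(1), with Theorem~\ref{t:structure} yielding rational connectedness, \cite{Tak03} yielding $\pi_1(\widehat X)=1$, \cite{Yos21} yielding Fano type, and descent of Fano type along quasi-\'etale covers.

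The real difference is in (3)$\Rightarrow$(1). The paper takes $\widehat X$ to be the cover from Theorem~\ref{t:structure}, which carries the MRC fibration $\widehat X\to T$. It then uses $\pi_1(\widehat X)\cong\pi_1(T)$ (via \cite{Tak03} and \cite[Theorem 5.2]{Kol93}) to force $T$ to be a point. This is shorter, but it is phrased for that particular cover. You instead pass to the finite universal cover $\widetilde X$. The lift of the endomorphism exists there without taking iterates, directly by the lifting criterion, since $\widetilde X$ is simply connected. You observe that $\widetilde X$ has no non-trivial quasi-\'etale cover, so $\widehat q(\widetilde X)=q(\widetilde X)=0$, and read off $\dim T=0$ from the equality $\dim T=\widehat q$ in Theorem~\ref{t:structure}. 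This uses the dimension formula instead of the fundamental group of the torus. It also applies verbatim to any $f$-equivariant maximally quasi-\'etale cover with finite $\pi_1$, which is what (3) literally asserts.

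Your workaround in (2)$\Rightarrow$(3) (lift first via Theorem~\ref{thm-lift}, then apply Theorem~\ref{t:structure}) is likewise fine. It only needs that $\widehat q$ cannot increase under a quasi-\'etale cover, which is immediate from the definition.

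One caveat: keep \cite{Yos21} as the actual input for Fano type, for example via Theorem~\ref{t:qe-mrc}(2) with $\widehat q=0$ applied to $\widetilde X$, whose only quasi-\'etale cover is itself. Your fallback through \cite[Theorem 1.10]{Men20} would not suffice. An MMP that ends at a point does not by itself give Fano type, as the variety $W_X$ of Example~\ref{e:non-ft} shows.
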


For the organization of the paper, we will postpone the proof of Theorem \ref{t:structure} and Corollary \ref{c:simply} near the end of the paper (see Section \ref{s:num0}).

The equality \(\dim T=\widehat{q}(X)\) in Theorem \ref{t:structure} singles out the rationally connected variety as the fundamental building block.
Initiating from Corollary \ref{c:simply}, we have following extension of \cite[Corollary 6.3]{IMZ23}, which is independent of dynamics. 

\begin{theorem}\label{t:psef-rc}
Let \(X\) be a normal compact klt K\"ahler variety whose tangent sheaf is pseudo-effective. 
Then the following assertions are equivalent.
\begin{enumerate}
    \item Every quasi-\'etale cover of \(X\) is rationally connected. 
    \item \(X\) is projective, and the tangent sheaf \(T_X\) is generically ample.
    \item The augmented irregularity \(\widehat{q}(X)\) vanishes.
\end{enumerate}
Moreover, if \(X\) is maximally quasi-\'etale $($see Subsection \ref{sec-qe}$)$, then the above equivalent conditions are also equivalent to the following condition:
\begin{enumerate}
    \item[(4)] \(X\) is simply connected.
\end{enumerate}
\end{theorem}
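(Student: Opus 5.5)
The plan is to route everything through the structure theorem for klt Kähler varieties with pseudo-effective tangent sheaf, Theorem \ref{t:psef-strc}. By the remark after Theorem \ref{t:structure}, Theorem \ref{t:nd0} makes it unconditional. It says that, after a quasi-\'etale cover $X'\to X$, there is a holomorphic MRC fibration $X'\to T$ onto a complex torus, with $\dim T=\widehat{q}(X)$. We also use the standard behaviour of augmented irregularity: it is invariant under quasi-\'etale covers, and $\widehat{q}=0$ for rationally connected klt varieties, since their resolutions carry no holomorphic $1$-forms and reflexive forms extend on klt spaces by Greb--Kebekus--Kov\'acs--Peternell. We then prove $(1)\Rightarrow(3)\Rightarrow(1)$, $(1)\Rightarrow(2)$ and $(2)\Rightarrow(1)$.

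For $(1)\Rightarrow(3)$: if some quasi-\'etale cover $X'$ had $q(X')>0$, then $X'$ would be rationally connected by hypothesis, which contradicts $h^0(X',\Omega^{[1]}_{X'})\neq 0$. Hence $\widehat{q}(X)=0$. For $(3)\Rightarrow(1)$: given any quasi-\'etale cover $X'\to X$, $T_{X'}$ is still pseudo-effective, since the pseudo-effectivity of Definition \ref{df-pos} is preserved under quasi-\'etale pullback because $T_{X'}$ agrees with the pullback of $T_X$ off codimension two. Applying Theorem \ref{t:psef-strc} to $X'$ gives an MRC fibration onto a torus of dimension $\widehat{q}(X')=\widehat{q}(X)=0$, so $X'$ is rationally connected. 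For $(1)\Rightarrow(2)$: a compact Kähler rationally connected klt variety has $h^{2,0}=0$, so it is projective. Generic ampleness should then follow from the theory of \cite{IMZ23}, whose Corollary 6.3 we are extending. The Harder--Narasimhan filtration of $T_X$ with respect to a movable class has pseudo-effective graded pieces, because the quotients of a pseudo-effective sheaf are pseudo-effective. If the minimal slope were $\le 0$, the last graded piece $Q$ would be numerically flat on a general complete intersection curve. Through the foliation and algebraic-integrability argument, this yields a nontrivial quasi-\'etale torus-type quotient, or reflexive forms on a quasi-\'etale cover, contradicting $\widehat{q}=0$. For $(2)\Rightarrow(1)$: generic ampleness passes to quasi-\'etale covers, again because the sheaves coincide in codimension one and the covers are finite. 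A generically ample tangent sheaf on a projective klt variety forces rational connectedness by Bogomolov--McQuillan and Campana--P\u{a}un applied to the MRC quotient.

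For condition (4) when $X$ is maximally quasi-\'etale: if $X$ is simply connected, then $X$ has no nontrivial étale covers. Since $\pi_1(X_{\reg})\to\pi_1(X)$ is an isomorphism on profinite completions by maximality, there are no nontrivial quasi-\'etale covers either. Moreover $q(X)=0$, because $H^1(X,\mathbb{C})=0$ and the Hodge decomposition holds for klt Kähler varieties. So $\widehat{q}(X)=0$. Conversely, if (1) holds, then $X$ is rationally connected, so $\pi_1$ of a resolution is trivial. Since klt singularities have rationally chain connected fibres on resolutions (Takayama), $\pi_1(X)=1$.

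The main obstacle is $(1)\Rightarrow(2)$, that is, generic ampleness from pseudo-effectivity plus rational connectedness. The first thing to try is the Harder--Narasimhan argument above, using the structure theorem to exclude a numerically flat quotient. If that fails, I would fall back on the almost-nef-type description from \cite{IMZ23}, upgraded to the klt setting via the maximally quasi-\'etale cover.
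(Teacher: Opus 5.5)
Your arguments for $(1)\Rightarrow(3)$, $(3)\Rightarrow(1)$, $(2)\Rightarrow(1)$ and for condition (4) are sound. Two of them take routes different from the paper's.
\begin{itemize}
\item You get $(3)\Rightarrow(1)$ from Theorem \ref{t:psef-strc}, which rests on the K\"ahler MMP of Section \ref{s:num0} and on \cite{MZ-TJM}. The paper instead uses Proposition \ref{prop-irreg}, which keeps this theorem independent of the MMP. Theorem \ref{t:psef-strc} does not assert $\dim T=\widehat q(X)$, but you do not need it: $q=0$ on the cover already forces the torus to be a point.
\item You replace the paper's Albanese argument for $(2)\Rightarrow(3)$ by Campana--P\u{a}un. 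For that, a single pulled-back polarization on the cover suffices.
\end{itemize}

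The genuine gap is the generic ampleness in $(1)\Rightarrow(2)$, which you yourself flag. Knowing that the minimal-slope quotient $\mathcal Q$ is numerically flat on a general complete intersection curve is only a curve-level statement. The sentence ``through the foliation and algebraic-integrability argument, this yields a nontrivial quasi-\'etale torus-type quotient, or reflexive forms on a quasi-\'etale cover'' names no mechanism that produces either object. That is exactly where the substance of the implication lies.

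The missing ingredient is a \emph{global} flatness statement for the degree-zero quotient. The paper argues as follows.
\begin{enumerate}
\item Pass to a maximally quasi-\'etale cover $X'\to X$. By (1) it is rationally connected, hence projective and, being klt, simply connected.
\item $T_{X'}$ is still pseudo-effective, and the Fujita decomposition \cite[Theorem 3.5]{IMZ23} gives $0\to\mathcal E\to T_{X'}\to\mathcal Q\to 0$. Here $\mathcal E$ is generically ample and $\mathcal Q^{\vee\vee}$ is locally free and numerically flat. In your language: the destabilizing quotient is pseudo-effective with pseudo-effective determinant of degree zero, hence $c_1=0$, and Theorem \ref{t:CDM} applies to its reflexive hull on the maximally quasi-\'etale cover.
\item If $T_{X'}$ were not generically ample, then $\mathcal Q\neq 0$. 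Simple connectedness, together with $h^1(\mathcal O_{X'})=0$, forces $\mathcal Q^\vee$ to be trivial of positive rank. The dual injection $\mathcal Q^\vee\hookrightarrow\Omega^{[1]}_{X'}$ then produces nonzero reflexive $1$-forms, contradicting rational connectedness.
\item Generic ampleness descends from $X'$ to $X$ by \cite[Proposition 3.7]{IMZ23}.
\end{enumerate}
Without this or an equivalent global flatness input, your $(1)\Rightarrow(2)$ is not established.
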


Regarding the equivalences in Theorem \ref{t:psef-rc}, the maximally quasi-\'etale assumption in Corollary \ref{c:simply}(3) cannot be removed; see Example \ref{ex-mqe}. 

For a surjective endomorphism \(f\colon X \to X\) of a compact K\"ahler variety, lifting or descending \(f\) to suitable canonical models is an  important classification tool.   
For instance, when \(X\) is a projective klt variety, after iterates, one can lift \(f\) to a small \(\mathbb{Q}\)-factorial model (see \cite[Theorem 1.2]{MYY24-lcy} and \cite[Appendix]{CZ25}). 

Another key ingredient of the proof of Theorem \ref{t:structure}  
concerns the
equivariance of maximally quasi-\'etale covers.
The equivariance of suitable quasi-\'etale covers plays a crucial  role in separating \(Q\)-abelian varieties and rationally connected varieties appearing in this structure theory. 
Several previous works, including \cite{NZ10,Men20,Zho21-Asian}, developed the so-called \textit{Albanese closure} to lift endomorphisms from \(Q\)-abelian varieties to their finite covers.
Yoshikawa \cite{Yos21} proved a lifting result for int-amplified endomorphisms to certain quasi-\'etale covers of projective klt varieties.  

Our next result combines and extends these results by showing that there exists a suitable maximally quasi-\'etale cover of \(X\) to which \(f\) lifts, even without taking iterates.

\begin{theorem}\label{thm-lift}
Let \(f \colon X \to X\) be a surjective endomorphism of a compact klt K\"ahler variety \(X\).
Then there exists a maximally quasi-\'etale cover
\(\widehat X \to X\) to which \(f\) lifts.
\end{theorem}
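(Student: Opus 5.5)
We will first fix what a maximally quasi-\'etale cover is. A quasi-\'etale cover $\widehat X\to X$ is \emph{maximally quasi-\'etale} if every quasi-\'etale cover of $\widehat X$ is \'etale. Equivalently, the natural map $\widehat\pi_1(\widehat X_{\reg})\to \widehat\pi_1(\widehat X)$ of profinite completions is an isomorphism. Such covers exist for compact klt K\"ahler spaces by the K\"ahler analogue of Greb--Kebekus--Peternell (Braun's finiteness of local/regional fundamental groups extended analytically), and we assume this from the subsection on quasi-\'etale covers.

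The plan is to avoid choosing an arbitrary maximally quasi-\'etale cover and instead build a \emph{canonical} one, characterized group-theoretically and therefore preserved by $f$. Write $G=\pi_1(X_{\reg})$, and let $\widehat G$ be its profinite completion. Since $X$ is klt, the kernel $K$ of $\widehat G\to\widehat\pi_1(X)$ is topologically generated by the images of the local fundamental groups at singular points. The existence result says there is a finite-index open normal subgroup $N\subset\widehat G$ with $N\cap K'=1$ for the relevant local subgroups. We choose $N$ canonically: let $N_0$ be the intersection of all open normal subgroups of $\widehat G$ of index at most $d$, where $d$ is the minimal degree of a maximally quasi-\'etale (Galois) cover. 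If $\widehat G$ is topologically finitely generated, there are finitely many such subgroups, so $N_0$ is open and characteristic. Here $\pi_1(X_{\reg})$ is finitely generated because $X_{\reg}$ is a Zariski-open subset of a compact analytic space, hence homotopy equivalent to a finite CW complex. A subcover of a maximally quasi-\'etale cover that is still quasi-\'etale over it remains maximally quasi-\'etale, since further quasi-\'etale covers are \'etale by transitivity. Hence the Galois cover $\widehat X\to X$ attached to $N_0$ (the normalization of $X$ in the corresponding finite \'etale cover of $X_{\reg}$) is maximally quasi-\'etale.

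Next we show $f$ lifts. The map $f$ is finite surjective. Let $U=X_{\reg}\setminus (f^{-1}(X_{\sing})\cup \mathrm{Ram}(f))$, whose complement has codimension $\ge 2$ in a suitable sense. We would like $f|_U\colon U\to X_{\reg}$ to be finite \'etale, but it need not be, since $f$ may ramify along divisors. The correct approach is to consider the fibre product $X'$, the normalization of $\widehat X\times_{X,f}X$. The cover $X'\to X$ is Galois with group a quotient of $\widehat G/N_0$ and is quasi-\'etale, because $\widehat X\to X$ is quasi-\'etale and quasi-\'etaleness is stable under normalized base change along finite surjective maps. Indeed, the branch locus of $X'\to X$ lies in $f^{-1}$ of the codimension $\ge 2$ branch locus, and since $f$ is finite this preimage still has codimension $\ge 2$. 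Each connected component of $X'$ is therefore a quasi-\'etale Galois cover of $X$ of degree $\le d$ and corresponds to an open normal subgroup of index $\le d$, which contains $N_0$. Thus $\widehat X\to X$ factors through a component $X'_1$, giving $\widehat X\to X'_1\to \widehat X$ and hence a lift $\widehat f\colon\widehat X\to\widehat X$ with $\nu\circ\widehat f=f\circ\nu$. Surjectivity of $\widehat f$ follows by degree counting.

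The main obstacle is making the characteristic-subgroup choice compatible with the quasi-\'etale condition. Not every open normal subgroup of index $\le d$ in $\widehat G$ gives a maximally quasi-\'etale cover, and the intersection might kill the property. We will therefore instead define $N_0$ as the intersection of \emph{all} open normal subgroups $N\subset \widehat G$ of index $\le d$ that define maximally quasi-\'etale covers. There are finitely many, and the resulting cover dominates a maximally quasi-\'etale one, so it is itself quasi-\'etale over that one and hence \'etale over it, and hence maximally quasi-\'etale. In the lifting step we must then check that the component $X'_1$ is maximally quasi-\'etale. This holds because $X'_1\to X$ pulls back, via the finite map $f$, from $\widehat X$. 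Any quasi-\'etale cover of $X'_1$ is dominated, after normalized base change, by a quasi-\'etale cover of $\widehat X$, which is \'etale, and a finite map pulling an \'etale cover back onto a quasi-\'etale one forces \'etaleness by purity of the branch locus on the normal space. This purity step, in the analytic klt setting, is where we expect the most care to be needed.
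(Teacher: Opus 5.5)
Your strategy is the right one: find a finite-index subgroup of \(G=\pi_1(X_{\reg})\) that lies inside the subgroup of some maximally quasi-\'etale cover and is preserved by the endomorphism of \(G\) induced by \(f\), using finite generation of \(G\). However, the step that actually produces the lift has a genuine gap.

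The first problem is the degree bound. A component \(X'_1\) of the normalized fibre product \(\widehat X\times_{X,f}X\) does not have degree \(\le d\) over \(X\). Its degree is only bounded by \([\widehat G:N_0]\), which can exceed \(d\). So nothing forces its subgroup to be among those you intersected to form \(N_0\), and the factorization \(\widehat X\to X'_1\) is unjustified. (Also, each component is Galois with group a subgroup, not a quotient, of \(\widehat G/N_0\).)

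Your fallback does not help, because it rests on a false claim. You assert that components of \(f\)-pullbacks of maximally quasi-\'etale covers are again maximally quasi-\'etale ``by purity''. But purity of the branch locus is exactly what fails at klt points. Here is a counterexample:
\begin{enumerate}
\item Let \(X=(\mathbb{P}^1\times\mathbb{P}^1)/\iota\) with \(\iota(x,y)=(-x,-y)\).
\item Let \(q\) be the quotient map. It is maximally quasi-\'etale, since \(\mathbb{P}^1\times\mathbb{P}^1\) is smooth and simply connected.
\item Let \(f=q\circ g\), where \(g\colon X\to\mathbb{P}^1\times\mathbb{P}^1\) is induced by \((x,y)\mapsto(x^2,y^2)\).
\end{enumerate}
The normalized fibre product is \(X\sqcup X\), given by the graphs of \(g\) and \(\iota\circ g\). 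Each component maps isomorphically to \(X\), which is not maximally quasi-\'etale. On \(\pi_1(X_{\reg})=\mathbb{Z}/2\) the induced map is trivial, so it kills the local groups.

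The repair is to keep your first definition of \(N_0\) (all normal subgroups of index \(\le d\)) and argue subgroup by subgroup. The obstacle you raised against it is not real:
\begin{enumerate}
\item Every finite-index subgroup of \(G\) gives a quasi-\'etale cover of \(X\).
\item The cover attached to \(N_0\) dominates a Galois maximally quasi-\'etale cover of degree \(d\). Hence it is \'etale over that cover, and therefore maximally quasi-\'etale.
\item Let \(\varphi\colon G\to G\) be induced by \(f\colon U\to X_{\reg}\), where \(U=X_{\reg}\setminus f^{-1}(X_{\sing})\). You do not need \(f|_U\) to be \'etale, and \(\pi_1(U)\cong G\) because the complement has codimension \(\ge 2\).
\item For every normal \(N\) of index \(\le d\), the preimage \(\varphi^{-1}(N)\) is again normal of index \(\le d\). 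Hence \(N_0\subseteq\bigcap_N\varphi^{-1}(N)=\varphi^{-1}(N_0)\), i.e.\ \(\varphi(N_0)\subseteq N_0\).
\end{enumerate}
This inclusion is precisely what makes \(\widehat X\) dominate the component of the pullback corresponding to \(\varphi^{-1}(N_0)\).

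Once corrected this way, your proof differs from the paper's only in the choice of invariant subgroup. The paper starts from an arbitrary maximally quasi-\'etale cover with subgroup \(H\), takes its normal core \(H_0\) of index \(m\), and sets \(K=\bigcap_{i\ge0}\varphi^{-i}(H_0)\). This has finite index because \(G\) has only finitely many subgroups of index \(\le m\). The paper then lifts via the lifting criterion on \(q^{-1}(U)\) and extends over \(Y\) by normality, using the closure of the graph in the fibre product. Your corrected \(N_0\) is invariant under every endomorphism at once and does not depend on \(f\), at the cost of a typically larger cover.
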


As another application of Theorem \ref{t:pos}, together with the fact that smooth hypersurfaces of dimension at least \(3\) and degree at least \(3\) have non-pseudo-effective tangent bundles (see \cite[Theorem 1.4]{HLS22}), we give an alternative proof of \cite{Bea01} 
(cf.~\cite[Proposition 8]{PS89}, \cite[Theorem A]{KT24}). 

\begin{corollary}\label{Cor:linear}
Let \(X\subseteq\mathbb{P}^n\) be a smooth hypersurface of dimension at least \(3\).
Assume that \(X\) admits a non-isomorphic surjective endomorphism.
Then \(X\) is a hyperplane. 
\end{corollary}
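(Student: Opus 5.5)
The plan is to deduce Corollary \ref{Cor:linear} in two stages: first show that a non-isomorphic surjective endomorphism of a smooth hypersurface forces an int-amplified one, and then apply Theorem \ref{t:pos} against the non-pseudo-effectivity result of \cite[Theorem 1.4]{HLS22}.

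Let $X\subseteq\mathbb{P}^n$ be a smooth hypersurface of degree $d$ with $\dim X=n-1\geq 3$, and let $f\colon X\to X$ be a non-isomorphic surjective endomorphism. By the Lefschetz hyperplane theorem, $\mathrm{Pic}(X)\cong\mathbb{Z}$, generated by $H=\mathcal{O}_X(1)$. Moreover $H^{1,1}(X)=H^2(X,\mathbb{C})$ is one-dimensional, since $H^{2,0}=0$ by Lefschetz and Hodge symmetry.

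The first stage is to show that $f$ is int-amplified.
\begin{itemize}
\item Write $f^*H\equiv qH$ for some integer $q$. Since $f$ is finite and surjective, $f^*H$ is ample, so $q\geq 1$.
\item Comparing top self-intersections gives $\deg f\cdot H^{n-1}=(f^*H)^{n-1}=q^{n-1}H^{n-1}$, hence $\deg f=q^{n-1}$.
\item Because $f$ is not an isomorphism and $X$ is normal, $\deg f\geq 2$. Indeed, a finite birational morphism onto a normal variety is an isomorphism.
\item Therefore $q\geq 2$, and $f^*[H]-[H]=(q-1)[H]$ is Kähler. Hence $f$ is int-amplified in the sense recalled before Theorem \ref{t:pos}.
\end{itemize}

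The second stage applies Theorem \ref{t:pos}. It gives that $T_X$ is weakly positively curved, and in particular pseudo-effective in the sense of Definition \ref{df-pos}. We then split according to the degree $d$.
\begin{itemize}
\item If $d\geq 3$, then since $\dim X\geq 3$, \cite[Theorem 1.4]{HLS22} says $T_X$ is not pseudo-effective, a contradiction.
\item If $d=2$, then $X$ is a smooth quadric of dimension at least $3$. A classical fact (Paranjape--Srinivas \cite{PS89}) rules out non-isomorphic endomorphisms here. This case can also be handled directly: since $\deg f=q^{n-1}$, the induced action on middle cohomology for odd-dimensional quadrics, or the failure of the required degree identity via Hurwitz-type ramification considerations, excludes it.
\item Hence $d=1$, so $X$ is a hyperplane.
\end{itemize}

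The main point to check is that the pseudo-effectivity notion in \cite{HLS22} agrees with, or is implied by, the one in Definition \ref{df-pos}. The paper notes that its notion is stronger than pseudo-effectivity of $\mathcal{O}_{\mathbb{P}(T_X)}(1)$. The latter is what \cite{HLS22} disproves, so the implication goes in the right direction.

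The quadric case is the other obstacle, because quadrics are homogeneous and so have pseudo-effective (indeed nef) tangent bundle. Theorem \ref{t:pos} therefore cannot exclude them, and one must cite \cite{PS89} or \cite{Bea01} for that case.
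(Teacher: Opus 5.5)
Your proposal is correct and follows the paper's route. Picard number one makes any non-isomorphic surjective endomorphism int-amplified, and Theorem \ref{t:pos} then gives pseudo-effectivity of $T_X$, hence of $\mathcal{O}_{\mathbb{P}(T_X)}(1)$, which contradicts \cite[Theorem 1.4]{HLS22} when $d\geq 3$. The quadric case, where $T_X$ is nef so Theorem \ref{t:pos} gives nothing, must be imported from Paranjape--Srinivas \cite{PS89}.

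The one thing to drop is your suggested ``direct'' treatment of quadrics, which is not an argument as stated. An odd-dimensional quadric has zero middle cohomology, and neither the cohomology action nor the ramification formula $K_X=f^*K_X+R$ gives an evident contradiction. Rely on the citation there.
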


It is worth noting that Corollary \ref{Cor:linear} 
fails without the smoothness assumption; indeed, for each \(n\geq 4\), there exists a degree \(n\) klt hypersurface in \(\mathbb{P}^n\) that admits a non-isomorphic surjective endomorphism (see \cite[Example 1.9]{Zha14}).

It is a folklore conjecture that a Fano manifold of Picard number one admitting a non-isomorphic surjective endomorphism is isomorphic to projective space. 
The conjecture was partially solved in \cite{SZ25} under the assumption that \(\mathcal{O}_{\mathbb{P}(T_X)}(1)\) is big.   
Building on this result, Theorem \ref{t:pos} may be viewed as a first step toward the following expectation.

\begin{conjecture}[{cf.~Remark \ref{r:pos-cur} (3)}]\label{Conj-big}
Let \(X\) be a Fano manifold of Picard number one admitting a non-isomorphic surjective endomorphism.
Then there is a positive integer \(m\) such that \(H^0(X,\Sym^mT_X)\neq 0\). 
More optimistically, the tangent bundle \(T_X\) is big.
\end{conjecture}

\subsection{Further questions and examples}\label{intro-3}

We finally discuss examples showing the limitations of pseudo-effectivity of
the tangent bundle when we only consider  the building block--rationally connected varieties--with positive tangent sheaves.

Let $X$ be a
rationally connected projective manifold.
If the tangent bundle $T_X$ is nef, then $X$ is Fano by \cite{DPS94} 
and is expected to be rational homogeneous by the Campana--Peternell conjecture \cite{CP91}. 
On the other hand, if $X$ admits an int-amplified endomorphism, then $X$ is of Fano type by \cite{Yos21} and is even conjecturally to be toric, see \cite[Question 4.4]{Fak03}. 
Therefore, we might expect pseudo-effectivity of the tangent bundle to impose
a similar, although weaker, form of rigidity.

In what follows, we provide examples, showing that 
this expectation is too optimistic: 

\begin{theorem}[{see Examples \ref{e:dp} and  \ref{e:non-ft}}] \label{thm-example}
A rationally connected projective manifold with pseudo-effective tangent bundle 
is not necessarily of Fano type, nor is it necessarily almost homogeneous. 
\end{theorem}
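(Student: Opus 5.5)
The statement is proved by exhibiting two explicit rationally connected projective manifolds with pseudo-effective tangent bundle: one that is not of Fano type, and one that is not almost homogeneous. The plan is to look among smooth rational surfaces and blow-ups of projective space, where pseudo-effectivity of $T_X$ can be checked concretely and the failure of the Fano type or homogeneity property is visible.

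\emph{Example not of Fano type.} I would take a rational surface $X$ obtained by blowing up $\mathbb{P}^2$ at points in special position, so that $-K_X$ is nef but not big. The natural candidate is the blow-up at the nine base points of a general cubic pencil, or a more special configuration lying on a smooth cubic.

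1. \emph{Not of Fano type.} Fano type would force $-K_X$ to be big. Here $-K_X$ is nef with $K_X^2=0$, so it is not big, and $X$ is not of Fano type.

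2. \emph{Pseudo-effectivity of $T_X$.} I would produce many sections of $\mathrm{Sym}^m T_X$ that generate generically, up to a small twist by an ample line bundle $A$. To get them, I would use vector fields or rational vector fields adapted to the pencil. Alternatively, I would choose the configuration so that $X$ carries a nontrivial action of a positive-dimensional algebraic group, for instance blowing up points on a line or on a conic. Then $T_X$ contains a subsheaf generated by global vector fields, and pseudo-effectivity follows from the extension and quotient stability properties of the definition in Section \ref{s:ex}.

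3. \emph{Main obstacle.} The hard step is reconciling the two requirements. A large automorphism group makes $T_X$ pseudo-effective but tends to make $X$ of Fano type. So I would look for non-generic blow-ups of $\mathbb{P}^2$ at points on a line: these have a $\mathbb{G}_a$- or torus action, but with enough points $-K_X$ fails to be big. Pseudo-effectivity would then come from $H^0(T_X)\neq0$ together with the fibration structure, via the criterion that an extension of pseudo-effective pieces is pseudo-effective.

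\emph{Example not almost homogeneous.} Almost homogeneity means $\mathrm{Aut}^0(X)$ has a dense orbit. I would take a toric surface blown up at a torus-fixed point. Such blow-ups keep a torus action, so $T_X$ is generically globally generated and hence pseudo-effective. To destroy the dense orbit, I would instead blow up a non-fixed general point on a boundary curve, keeping only a one-dimensional group action. Pseudo-effectivity then has to be checked by hand, using the induced $\mathbb{P}^1$-fibration $X\to\mathbb{P}^1$. The sequence
\[
0\to T_{X/\mathbb{P}^1}\to T_X\to \pi^*T_{\mathbb{P}^1}
\]
has a pseudo-effective relative part, coming from the $\mathbb{G}_m$-action along the fibres, and a nef quotient. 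So $T_X$ is pseudo-effective. Finally, I would verify that $\mathrm{Aut}^0(X)$ has dimension at most one, which rules out a dense orbit on a surface.
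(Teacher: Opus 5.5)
Your plan has a genuine gap at exactly the step you flagged as the main obstacle. The principle you use to get pseudo-effectivity is that an extension of an effective (or pseudo-effective) subsheaf by a nef quotient is pseudo-effective. This is false in general for pseudo-effectivity in the sense of Definition~\ref{df-pos}.

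The paper's own Example~\ref{ex-psef} is a counterexample of precisely the shape you propose for the non-almost-homogeneous surface. There $X$ is the blow-up of $\mathbb{P}^1\times\mathbb{P}^1$ at $n\geq 5$ points of a fibre $\ell_1$ of $p_1$, fibred over $Y_2\cong\mathbb{P}^1$ by $\phi=p_2\circ\pi$. The group $\mathrm{Aut}(\mathbb{P}^1,p_1(\ell_1))$, acting on the first factor, fixes $\ell_1$ pointwise, lifts to $X$, and moves points along the fibres of $\phi$. Correspondingly, $T_{X/Y_2}\cong\mathcal{O}_X(2\ell_1'+\sum E_i)$ is effective, and the quotient of $T_X$ by it has reflexive hull $\phi^*\mathcal{O}_{\mathbb{P}^1}(2)$, which is nef. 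Nevertheless $T_X$ is not pseudo-effective by \cite[Proposition 4.5]{HIM22}.

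So a fibration with vector fields along the fibres proves nothing by itself. For surfaces with a small automorphism group, pseudo-effectivity of $T_X$ depends delicately on the configuration of blown-up points. It would need a genuine construction of metrics, or of sections of $\mathrm{Sym}^mT_X\otimes A$. The same objection invalidates Steps~2--3 of your Fano-type construction, which also rest on this extension principle.

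Independently, neither of your surface candidates for the Fano-type half can work.

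For the nine base points of a general cubic pencil (or general points on a smooth cubic), the criterion of \cite[Proposition 4.5]{HIM22} recalled in Example~\ref{ex-psef} applies. After identifying the blow-up of $\mathbb{P}^2$ at two of the points with a blow-up of $\mathbb{P}^1\times\mathbb{P}^1$, the blown-up points project to more than four distinct points. Hence $T_X$ is not pseudo-effective.

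For points on a line $L\subset\mathbb{P}^2$, $X$ is almost homogeneous, so $T_X$ is pseudo-effective. But $-K_X=3H-\sum E_i=2H+L'$ is big for every $n$, contrary to your claim. In fact $X$ is of Fano type. For $a<1$ close to $1$ the pair $(X,aL')$ is klt, and $-(K_X+aL')=(3-a)H-(1-a)\sum E_i$ is ample by Nakai--Moishezon:
\begin{itemize}
\item its square $(3-a)^2-n(1-a)^2$ and its degrees on the $E_i$ and on $L'$ are positive;
\item any other irreducible curve $C\sim dH-\sum m_iE_i$ has $d\geq\sum m_i$ (since $C\cdot L'\geq 0$), hence degree at least $2d>0$.
\end{itemize}
Note also that a surface with pseudo-effective tangent bundle and non-big $-K_X$ would answer Problem~\ref{prob-smooth}(1), which the paper leaves open.

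The missing idea is to pass to dimension three, where almost homogeneity (which gives pseudo-effectivity for free) and failure of Fano type decouple. The paper blows up $\mathbb{P}^3$ at more than nine general points of a plane $H$. The $\mathbb{G}_a^3$-action fixing $H$ pointwise lifts with an open orbit, so $T_X$ is pseudo-effective. By \cite[Proposition 6.1]{CLO16}, the Mori cone equals that of the corresponding blow-up of $\mathbb{P}^2$, which has infinitely many $(-1)$-curves. Hence $X$ is not of Fano type by \cite{BCHM10}.

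For non-almost-homogeneity, the paper avoids surfaces altogether and takes a complete intersection of two quadrics of dimension $\geq 3$. By \cite{BEHLV24}, $H^0(X,\mathrm{Sym}^2T_X)\neq 0$ and the non-nef locus of $\mathcal{O}_{\mathbb{P}(T_X)}(1)$ does not dominate $X$, which gives pseudo-effectivity. Meanwhile $H^0(X,T_X)=0$ forces $\mathrm{Aut}^0(X)$ to be trivial.
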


This leads to the following revised problem for rationally connected manifolds.

\begin{prob}\label{prob-smooth}
Let $X$ be a rationally connected projective manifold with pseudo-effective tangent bundle.
Then we ask the following questions.  
\begin{enumerate}
\item[(1)] Is the anticanonical divisor $-K_{X}$ big?
\item[(2)] Does $X$ admit a birational morphism $X \to Y$ such that $Y$ is Fano, or at least of Fano type?
\end{enumerate}
\end{prob}

This paper is organized as follows.
After reviewing the relevant notions and terminology in Section \ref{s:notion}, 
we prove Theorem \ref{t:pos} in Section \ref{s:proof}. 
We then prove Theorem \ref{t:psef-rc} in Section \ref{s:RC}.
In Section \ref{s:rmk}, we prove Theorem \ref{thm-lift}, and discuss Yoshikawa's cover from the viewpoint of maximally quasi-\'etale covers 
and the fundamental groups of regular loci.
In Section \ref{s:num0}, we prove the existence of minimal models for numerical dimension zero canonical divisor in the K\"ahler setting, and then prove Theorem \ref{t:structure}  and Corollary \ref{c:simply}. 
Finally, we present related examples and prove Theorem \ref{thm-example} in Section \ref{s:ex}.

\subsection*{Acknowledgements}
The authors are grateful to Prof.\,Masataka Iwai 
for fruitful discussions concerning Theorem \ref{t:psef-rc}, Example \ref{ex-psef}, and the work \cite{CDM26}. 
The first author is also grateful to Prof.\,Shou Yoshikawa 
for many discussions over the past several years on Theorem \ref{t:structure}. 
The second author would also like to thank Prof. Jie Liu for the useful discussion on Example \ref{e:non-ft}.

\section{Notation and terminology}\label{s:notion}
In this section, we briefly recall the notation and terminology used in the proofs.
We refer the reader to \cite[Subsection 2.2]{IJZ25} 
for a summary of standard notions and terminology concerning compact K\"ahler varieties, 
and to \cite[Section 2]{IMZ23} and the references therein 
for positivity properties of torsion-free sheaves.

\subsection{Singular Hermitian metrics}\label{s:metric}
In this subsection, we follow \cite{PT18,HPS18} and recall the notion of singular Hermitian metrics.

\begin{definition}[{see \cite[Definition 17.1]{HPS18}}]
Let \(X\) be a complex manifold and let \(E\) be a  vector bundle on \(X\) of rank \(r\geq 1\).
A singular Hermitian metric on \(E\) is a function \(h\) assigning to each point \(x\in X\) a singular Hermitian inner product 
$$|-|_{h,x}\colon E_x\to[0,+\infty]$$ on the complex vector space \(E_x\), satisfying the following two conditions:
\begin{enumerate}
\item \(h\) is finite and positive definite almost everywhere; that is, for every \(x\) outside a set of measure zero, \(|-|_{h,x}\) is a Hermitian inner product on \(E_x\).
\item \(h\) is measurable; that is, the function
\[
|s|_h\colon U\to[0,+\infty],~~~x\mapsto|s(x)|_{h,x},
\]
is measurable for every open subset \(U\subseteq X\) and every \(s\in H^0(U,E)\).
\end{enumerate}
\end{definition}

\begin{definition}[{see \cite[Definition 19.1]{HPS18}}]\label{d:metric}
Let \(X\) be a normal compact K\"ahler variety.
Let \(\mathcal{E}\) be a torsion-free sheaf on \(X\), and set 
\(X_0\coloneqq X_{\textup{reg}}\cap X_{\mathcal{E}}\), 
where \(X_{\textup{reg}}\) is the nonsingular locus of \(X\) and 
\(X_{\mathcal{E}}\) is the maximal open subset over which \(\mathcal{E}\) is locally free. 
Since \(X\) is normal and \(\mathcal{E}\) is torsion-free, 
the subset \(X_0\subseteq X\) is a Zariski open subset 
whose complement has codimension at least \(2\) in \(X\). 
A singular Hermitian metric \(h\) on \(\mathcal{E}\) is defined to be a singular Hermitian metric on the vector bundle \(\mathcal{E}|_{X_0}\).
\end{definition}

Let \(\theta\) be a smooth \((1,1)\)-form on \(X\) with local potentials; that is, 
\(\theta\) can be written locally as \(\theta=dd^c\varphi\) in a neighborhood of every point of \(X\).
We write
\[
\sqrt{-1}\Theta_h\geq \theta\otimes\id_\mathcal{E}
\]
if, for every open subset \(U\subseteq X\) and every local section 
\(e\in H^0(U\cap X_0,(\mathcal{E}|_{X_0})^\vee)\), 
the function \(\log|e|_{h^\vee}-\varphi\) is psh on \(U \cap X_0\), 
where \(\varphi\) is a local potential of \(\theta\) and \(h^\vee\) is the induced metric on the dual bundle \((\mathcal{E}|_{X_0})^\vee\).
Note that the function \(\log|e|_{h^\vee}-\varphi\) extends to a psh function on $U$ 
since $ \codim (X \setminus X_0) \geq 2$.

\subsection{Singular positivity of torsion-free sheaves}\label{sec-psef}
In this subsection, we recall positivity notions for torsion-free sheaves,
using the notation introduced in Definition \ref{d:metric}.

\begin{definition}\label{df-pos}
Fix a K\"ahler form \(\omega_X\) on a normal compact K\"ahler variety \(X\), 
that is, a positive $(1,1)$-form with local potentials.

\begin{enumerate}
\item A torsion-free sheaf \(\mathcal{E}\) on \(X\) is \textit{pseudo-effective} if, for every \(m\in\mathbb{Z}_+\), there exists a singular Hermitian metric \(h_m\) on the \(m\)-th symmetric power \(\textup{Sym}^m\mathcal{E}|_{X_0}\) such that \(\sqrt{-1}\Theta_{h_m}\geq-\omega_X\otimes\id_{\textup{Sym}^m\mathcal{E}}\). 

\item 
A torsion-free sheaf \(\mathcal{E}\) on \(X\) is \textit{weakly positively curved} 
if, for every $\varepsilon >0$, there exists a singular Hermitian metric $h_\varepsilon$ on $\mathcal{E}$
such that \[
\sqrt{-1}\Theta_{h_\varepsilon}\geq -\varepsilon \omega_X \otimes\id_\mathcal{E}. 
\]

\item 
A torsion-free sheaf \(\mathcal{E}\) on \(X\) is \textit{positively curved} 
if there exists a singular Hermitian metric $h_\varepsilon$ on $\mathcal{E}$
such that \[
\sqrt{-1}\Theta_{h_\varepsilon}\geq 0\otimes\id_\mathcal{E}. 
\]
\end{enumerate}
\end{definition}

By definition, every weakly positively curved sheaf is pseudo-effective. 
Indeed, for each \(m\in\mathbb{Z}_+\), choose \(\varepsilon<1/m\) and take a singular Hermitian metric \(h_\varepsilon\) on \(\mathcal{E}\) as in (2). 
Then the induced singular Hermitian metric 
\(h_m\coloneqq\textup{Sym}^m h_\varepsilon\) 
on \(\textup{Sym}^m\mathcal{E}|_{X_0}\) satisfies
\[
\sqrt{-1}\Theta_{h_m}
=
\sqrt{-1}\Theta_{\textup{Sym}^m h_\varepsilon}
\geq -m\varepsilon\omega_X\otimes\id_{\textup{Sym}^m\mathcal{E}}
\geq 
-\omega_X\otimes\id_{\textup{Sym}^m\mathcal{E}}.
\]
For projective varieties, using ample divisors, 
the pseudo-effectivity of \(\mathcal{E}\) can be characterized as follows. 

\begin{proposition}[{see \cite[Proposition 2.3]{Mat23}}]
Let \(\mathcal{E}\) be a torsion-free sheaf on a projective variety \(X\). 
Then the following conditions are equivalent:
\begin{itemize}
\item \(\mathcal{E}\) is pseudo-effective in the sense of Definition \ref{df-pos}. 
\item There exists an ample line bundle \(A\) such that, for every \(m\in\mathbb{Z}_+\), 
the reflexive hull \(\textup{Sym}^{[m]}\mathcal{E}\otimes A\) is generically globally generated. 
\end{itemize}
Moreover, when \(\mathcal{E}\) is locally free and $X$ is smooth, 
the above conditions are equivalent to the following condition:
\begin{itemize}
\item the non-nef locus of the hyperplane line bundle 
\(\mathcal{O}_{\mathbb{P}(\mathcal{E})}(1)\) does not dominate \(X\).
\end{itemize}
\end{proposition}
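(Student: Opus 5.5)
The plan is to prove $(2)\Rightarrow(1)$ by building metrics out of sections, $(1)\Rightarrow(2)$ by producing sections via $L^2$-methods, and the last equivalence through the identification $\pi_*\mathcal{O}_{\mathbb{P}(\mathcal{E})}(m)=\textup{Sym}^m\mathcal{E}$.

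\textbf{From sections to metrics, $(2)\Rightarrow(1)$.} Fix $A$ as in the second condition, with a smooth metric $h_A$ of curvature $\theta_A>0$. Take global sections $s_1,\dots,s_N$ of $\textup{Sym}^{[m]}\mathcal{E}\otimes A$ that generate it at a general point. For $e\in(\textup{Sym}^m\mathcal{E}|_{X_0})^\vee$, set
\[
|e|^2_{h^\vee}\coloneqq \sum_i |\langle e,s_i\rangle|^2_{h_A}.
\]
This is the pull-back of the flat metric under the generically injective map $(\textup{Sym}^m\mathcal{E}\otimes A)^\vee\to\mathcal{O}^{\oplus N}$. Hence it is a genuine Hermitian metric, degenerate only on an analytic subset. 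Since each $\langle e,s_i\rangle$ is a holomorphic section of $A$, the function $\log|e|_{h^\vee}+\varphi_A$ is psh, which gives $\sqrt{-1}\Theta_h\geq-\theta_A\otimes\id$.

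The constant $A$ does not depend on $m$, so this already proves pseudo-effectivity with respect to the form $\theta_A$. It remains to pass to an arbitrary fixed $\omega_X$. I would reduce this to showing that the notion is independent of the Kähler form, i.e.\ that a constant multiple $C\omega_X$ may be replaced by $\omega_X$. This is the step I expect to be the main obstacle. Using $\textup{Sym}^{km}$ together with $k$-th roots of $\sum|\langle e^k,s_i\rangle|^2$ only yields a Finsler metric, not a Hermitian one. I would instead first arrange (2) with $A$ replaced by a fixed $\mathbb{Q}$-multiple $\tfrac1k A$. Concretely, I would apply the multiplication map $(\textup{Sym}^{m}\mathcal{E})^{\otimes k}\to\textup{Sym}^{mk}\mathcal{E}$ dually, and use the $L^2$-argument of the next step to convert the resulting Finsler bound into Hermitian sections.

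\textbf{From metrics to sections, $(1)\Rightarrow(2)$.} Let $h_m$ be a metric on $\textup{Sym}^m\mathcal{E}|_{X_0}$ with $\sqrt{-1}\Theta_{h_m}\geq-\omega_X\otimes\id$. Choose $A$ ample with $\theta_A\geq 2\omega_X$, large enough that $A-K_X$ (computed on a resolution, or on the Stein open set obtained from $X_{\reg}$ by removing a hyperplane section) still carries positive curvature $\geq\omega_X$. This choice is independent of $m$. For a general point $x$, where $h_m$ is finite and nondegenerate and $X$, $\mathcal{E}$ are smooth, and for any $v\in(\textup{Sym}^m\mathcal{E}\otimes A)_x$, I apply the Ohsawa--Takegoshi extension theorem (or Hörmander's estimate with a weight having a log pole at $x$) to the twisted bundle. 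This produces an $L^2$ section on $X_0$ minus an analytic set with value $v$ at $x$. Being $L^2$, it extends across that analytic set, and reflexivity extends it across codimension $\geq 2$ to a section of $\textup{Sym}^{[m]}\mathcal{E}\otimes A$. Letting $v$ range over a basis shows generic global generation.

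\textbf{Locally free case.} Now let $X$ be smooth and $\mathcal{E}$ locally free, with $p\colon\mathbb{P}(\mathcal{E})\to X$. I would use $p_*\mathcal{O}_{\mathbb{P}(\mathcal{E})}(m)=\textup{Sym}^m\mathcal{E}$. Generic global generation of $\textup{Sym}^m\mathcal{E}\otimes A$ for all $m$ is equivalent to $\mathcal{O}(m)\otimes p^*A$ being globally generated over a general fibre for all $m$. Thus the base loci of $\mathcal{O}(1)+\tfrac1m p^*A$ avoid a general fibre, and the non-nef (diminished base) locus, being the union of these base loci, does not dominate $X$. Conversely, if the non-nef locus does not dominate $X$, then a general fibre $F$ avoids $\mathbf{B}(\mathcal{O}(m)+p^*A)$ for all $m$, after replacing $A$ by a fixed sufficiently ample multiple that makes the restriction to fibres surjective (Serre vanishing uniformly on $\mathbb{P}(\mathcal{E})$). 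Pushing forward then recovers (2).
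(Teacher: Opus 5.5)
The paper does not prove this statement; it only cites \cite[Proposition 2.3]{Mat23}, so I judge your sketch on its own terms. Your overall architecture is the natural one: sections give metrics, $L^2$ methods give sections, and the locally free case is compared with $\mathcal{O}_{\mathbb{P}(\mathcal{E})}(1)$. The real content of the proposition, however, is that a single twist $A$ works for \emph{all} $m$, and neither of the two steps where you need this actually delivers it.

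\emph{The step $(1)\Rightarrow(2)$.} You apply Ohsawa--Takegoshi or H\"ormander directly to the vector bundle $\Sym^m\mathcal{E}\otimes A$ with the metric $h_m\otimes h_A$. The bound in Definition \ref{df-pos} is of Griffiths type (plurisubharmonicity of $\log|e|_{h^\vee}$), whereas $L^2$ estimates for vector bundles require Nakano positivity. The standard conversion twists by $\det(\Sym^m\mathcal{E})$, whose negativity is of order $\textup{rk}(\Sym^m\mathcal{E})\,\omega_X$, so your $A$ would grow with $m$.

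The missing idea is to do all the $L^2$ theory for line bundles on $p\colon P=\mathbb{P}(\mathcal{E}|_{X_0})\to X_0$, where the rank $r$ is fixed. Using $h_{m+r}$, the quotient $p^*\Sym^{m+r}\mathcal{E}\to\mathcal{O}_P(m+r)$ gives $\mathcal{O}_P(m+r)$ a singular metric with curvature $\geq -p^*\omega_X$, and this metric is smooth on the fibre over almost every $x$. Since $K_{P/X}=\mathcal{O}_P(-r)\otimes p^*\det\mathcal{E}$, one has $\mathcal{O}_P(m)\otimes p^*A=K_P\otimes\mathcal{O}_P(m+r)\otimes p^*(A\otimes K_X^{-1}\otimes\det\mathcal{E}^{-1})$, so a single $A$ suffices for all $m$. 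Ohsawa--Takegoshi extension from the whole fibre $p^{-1}(x)$ then gives surjectivity of $H^0(X,\Sym^{[m]}\mathcal{E}\otimes A)\to\Sym^m\mathcal{E}_x\otimes A_x$. Here you should work on the complete K\"ahler manifold $p^{-1}(X_0\setminus H)$, since $X_{\reg}\setminus H$ is not Stein in general.

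\emph{The converse in the locally free case.} This fails for the same reason, in three places:
\begin{itemize}
\item Avoiding the stable base locus $\mathbf{B}(\mathcal{O}(m)+p^*A)$ only controls some multiple $k(\mathcal{O}(m)+p^*A)$, not $\mathcal{O}(m)+p^*A$ itself.
\item Base-point freeness on a fibre $F$ is weaker than surjectivity onto $H^0(F,\mathcal{O}(m))=\Sym^m\mathcal{E}_x$, which is what generic global generation requires.
\item Serre vanishing needs a twist depending on $m$, and in any case $\mathcal{O}(1)$ is not ample.
\end{itemize}
What you need is the following consequence of ``the non-nef locus does not dominate'': for every $\varepsilon>0$, $\mathcal{O}(1)+\varepsilon H$ carries a positive singular metric that is smooth near a very general fibre. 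That input is then fed into the same $L^2$ argument on $P$.

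\emph{The step $(2)\Rightarrow(1)$.} Your construction only yields $\sqrt{-1}\Theta_{h_m}\geq-\theta_A\otimes\id$. Since $A$ can be enlarged but not shrunk, this proves $(1)$ only for $\omega_X\geq\theta_A$, and your proposed reduction does not close the gap.
\begin{itemize}
\item The multiplication map $\Sym^k(\Sym^m\mathcal{E})\to\Sym^{km}\mathcal{E}$ is surjective, so sections of $\Sym^{[km]}\mathcal{E}\otimes A$ do not induce sections of $\Sym^k(\Sym^m\mathcal{E})\otimes A$.
\item What they do give is only a singular metric on $\mathcal{O}_P(m)$ with curvature $\geq-\frac1k p^*\theta_A$. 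This is a Finsler-type object, not a Hermitian metric on $\Sym^m\mathcal{E}$.
\item Converting it into a Hermitian metric by the $L^2$/direct-image argument reintroduces a fixed twist: the relative anticanonical bundle $\mathcal{O}_P(r)\otimes p^*\det\mathcal{E}^{-1}$, plus $K_X^{-1}$ and the weight if you produce sections.
\end{itemize}
So the curvature loss never drops below a fixed form, and ``(2) with $\frac1k A$'' is never reached.

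As it stands, your argument proves the equivalence with $(1)$ read as ``for some (equivalently, every sufficiently large) K\"ahler form''. The fixed-$\omega_X$ formulation of Definition \ref{df-pos} implicitly asserts independence of the choice of $\omega_X$. That independence needs a separate argument, and your sketch does not supply one.
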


The pseudo-effectivity of the tautological line bundle 
\(\mathcal{O}_{\mathbb{P}(\mathcal{E})}(1)\) 
is much weaker than the pseudo-effectivity of \(\mathcal{E}\) in the sense of Definition \ref{df-pos}. 
There are many examples where the former holds but the latter fails; 
for instance, the vector bundle \(\mathcal{O}\oplus\mathcal{O}(-1)\) over \(\mathbb{P}^1\) has this property. 
For an example involving tangent bundles, we refer to Example \ref{ex-psef}, 
where \(\mathcal{O}_{\mathbb{P}(T_X)}(1)\) is pseudo-effective 
but \(T_X\) is not pseudo-effective in the sense of Definition \ref{df-pos}.

We finally recall the notion of generically ample sheaves.

\begin{definition}[{see \cite[Definition 2.1]{Mat23}}]\label{d:psef}
A torsion-free sheaf \(\mathcal{E}\) on a normal projective variety \(X\) is \textit{generically ample} if, for any ample Cartier divisors \(A_1,\dots,A_{n-1}\), the minimal slope satisfies
$
\mu_{A_1,\dots,A_{n-1}}^{\min}(\mathcal{E})>0.
$
Here, the minimal slope is defined by
\[
\mu_{A_1,\dots,A_{n-1}}^{\min}(\mathcal{E})
\coloneqq
\inf_{}
\left\{
\frac{c_1(\mathcal{Q})\cdot A_1\cdots A_{n-1}}{\textup{rk}\mathcal{Q}}
\ \middle|\ 
\mathcal{Q} \text{ is a torsion-free quotient of } \mathcal{E}
\right\}.
\]
\end{definition}

\subsection{Numerical dimension and divisorial Zariski decompositions}\label{s:nd-prelim}

\begin{definition}
Let \(X\) be a compact K\"ahler manifold and let \(\alpha\) be a pseudo-effective real \((1,1)\)-class. 
\begin{enumerate}
\item The numerical dimension is defined by
\[
\textup{nd}_X(\alpha)
\coloneqq
\max\{0\leq p\leq\dim X\mid\langle\alpha^p\rangle\neq0\},
\]
where \(\langle\alpha^p\rangle\) is Demailly's mobile product; see \cite[Definition 18.13]{Dem12}. 
\item Boucksom's divisorial Zariski decomposition is given by
\[
\alpha=\{N_X(\alpha)\}+P_X(\alpha),
~P_X(\alpha)=\langle\alpha\rangle,
\]
where \(N_X(\alpha)\) is an effective real divisor and \(P_X(\alpha)\) is modified nef. 
\end{enumerate}
\end{definition}
By \cite[Theorems 18.12(d) and 18.14]{Dem12} and \cite{Bou04}, we know that
\[
\textup{nd}_X(\alpha)=0
~\Leftrightarrow~
P_X(\alpha)=0
~\Leftrightarrow~
\alpha=\{N_X(\alpha)\}.
\]
\begin{definition}
Let \(X\) be a normal compact K\"ahler space, and let \(H^{1,1}_{\mathrm{BC}}(X)\) be the Bott--Chern cohomology defined in \cite[Definition 4.6.2]{BG13}. 
If \(\alpha\in H^{1,1}_{\mathrm{BC}}(X,\mathbb{R})\) is pseudo-effective and \(\mu\colon Y\to X\) is a resolution, set
\[
N_X(\alpha)\coloneqq\mu_*N_Y(\mu^*\alpha),
\]
which is independent of the resolution by \cite[Definition A.6 and Lemma A.7]{DHY23}. 
We call \(\alpha\) modified nef if \(N_X(\alpha)=0\).
\end{definition}

\begin{proposition-definition}\label{prop-defn-num}
Let \(X\) be a normal compact K\"ahler space with rational singularities, let \(\alpha\in H^{1,1}_{\mathrm{BC}}(X,\mathbb{R})\) be pseudo-effective, and let \(\mu\colon Y\to X\) be a resolution.
\begin{enumerate}
\item The number
\[
\textup{nd}_X(\alpha)\coloneqq\textup{nd}_Y(\mu^*\alpha)
\]
is independent of \(\mu\), which we call the numerical dimension of \(\alpha\).
\item If \(E\geq0\) is a \(\mu\)-exceptional real divisor, then
\[
\textup{nd}_Y(\mu^*\alpha+[E])=\textup{nd}_X(\alpha).
\]
\item If \(\alpha\) is modified nef and \(\textup{nd}_X(\alpha)=0\), then \(\alpha=0\) in \(H^{1,1}_{\mathrm{BC}}(X,\mathbb{R})\).
\item If \(X\) is smooth and \(\alpha\) is nef, then
\[
\textup{nd}_X(\alpha)=\max\{p\mid\alpha^p\neq0\}.
\]
On a normal compact K\"ahler space with rational singularities, a nef
class of numerical dimension zero is zero, by \textup{(3)}.
\end{enumerate}
\end{proposition-definition}

\begin{proof}
For (1), let \(p\colon W\to Y\) be a modification between compact K\"ahler manifolds and let \(\beta\) be pseudo-effective on \(Y\). We first prove, for every \(k\geq0\),
\begin{equation}\label{eq:push}
p_*\langle(p^*\beta)^k\rangle=\langle\beta^k\rangle,
~
\langle(p^*\beta)^k\rangle=0\Longleftrightarrow\langle\beta^k\rangle=0.
\end{equation}
When \(\beta\) is big, choose a current \(T_{\min}\in\beta\) with minimal singularities. 
The transformation rule for non-pluripolar products gives the first equality in \eqref{eq:push}; see \cite[Proposition 1.12, Remark 1.7, and Definition 1.17]{BEGZ10}.

For a general pseudo-effective \(\beta\), fix K\"ahler classes \(\omega\) on \(Y\) and \(\widetilde\omega\) on \(W\). 
Since \(p^*\omega\) is nef and big, there are constants \(a,b>0\) such that
\(p^*\omega-a\widetilde\omega\) and \(b\widetilde\omega-p^*\omega\) 
are pseudo-effective; see \cite[Theorem 0.5]{DP04}. Monotonicity of mobile products gives
\[
\left\langle(p^*\beta+a\delta\widetilde\omega)^k\right\rangle
\preceq
\left\langle(p^*\beta+\delta p^*\omega)^k\right\rangle
\preceq
\left\langle(p^*\beta+b\delta\widetilde\omega)^k\right\rangle.
\]
Letting \(\delta\to 0\), applying the big case to \(\beta+\delta\omega\), and passing to the limit proves the first equality in the display \eqref{eq:push}.

For the nonvanishing part, put
\[
A\coloneqq\langle(p^*\beta)^k\rangle,~
B\coloneqq\langle\beta^k\rangle=p_*A,~
m\coloneqq\dim W-k.
\]
Suppose that \(A\neq0\). If \(m=0\), then \(B\neq0\) follows immediately. If \(m>0\), choose \(c>0\) such that \(cp^*\omega-\widetilde\omega\) is pseudo-effective. Mobile products are movable, intersections with nef classes remain movable, and a pseudo-effective \((1,1)\)-class pairs non-negatively with a movable class; see \cite[Propositions 2.24 and 2.26]{Xia26}. Hence
\[
0<A\cdot\widetilde\omega^m
\leq c^mA\cdot(p^*\omega)^m
=c^mB\cdot\omega^m,
\]
so \(B\neq0\). The converse follows from \(B=p_*A\).

Now let \(\mu_j\colon Y_j\to X\), \(j=1,2\), be two resolutions and let \(p_j\colon W\to Y_j\) be a common resolution. Applying the display \eqref{eq:push} to \(p_1\) and \(p_2\) proves (1).

Part (2) follows from (1), \cite[Lemma A.5]{DHY23}, and \cite[Proposition 3.2.10]{Bou02}. For (3), if \(\alpha\) is modified nef and has numerical dimension zero, then \(\mu^*\alpha=\{N_Y(\mu^*\alpha)\}\) by \cite[Theorem 18.14]{Dem12}; consequently,
\[
\alpha=\mu_*\mu^*\alpha
=\mu_*\{N_Y(\mu^*\alpha)\}
=\{N_X(\alpha)\}=0.
\]
Finally, when \(X\) is smooth and \(\alpha\) is nef, its mobile products agree with its ordinary products by \cite[Theorem 18.12(b)]{Dem12}. On a normal compact K\"ahler space with rational singularities, a nef class is modified nef, so the final assertion follows from (3).
\end{proof}

\begin{lemma}\label{l:nd}
Let \(\phi\colon(X,\Delta)\dashrightarrow(X',\Delta')\) be one step of a
\((K_X+\Delta)\)-MMP, either a divisorial contraction or a flip, between
strongly \(\mathbb{Q}\)-factorial compact K\"ahler klt pairs, where
\(\Delta'=\phi_*\Delta\). Assume that \(K_X+\Delta\) is pseudo-effective.
Then
\[
\textup{nd}_X(K_X+\Delta)
=\textup{nd}_{X'}(K_{X'}+\Delta').
\]
\end{lemma}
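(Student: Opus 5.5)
We compare both numerical dimensions on a common resolution. The tools are Proposition-Definition \ref{prop-defn-num}(1), which lets us compute \(\textup{nd}\) on any resolution, and (2), which says that adding an effective exceptional divisor does not change \(\textup{nd}\).

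We take a resolution \(W\) of the graph of \(\phi\), with \(p\colon W\to X\) and \(q\colon W\to X'\) bimeromorphic morphisms from a compact K\"ahler manifold. Write \(L\coloneqq K_X+\Delta\) and \(L'\coloneqq K_{X'}+\Delta'\). Both are \(\mathbb{Q}\)-Cartier because the pairs are strongly \(\mathbb{Q}\)-factorial, so \(p^*L\) and \(q^*L'\) make sense as classes on \(W\). Klt singularities are rational, so Proposition-Definition \ref{prop-defn-num} applies to both \(X\) and \(X'\).

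The key step is the negativity lemma for a \((K_X+\Delta)\)-negative step. For a divisorial contraction or a flip we expect
\[
p^*(K_X+\Delta)=q^*(K_{X'}+\Delta')+E
\]
with \(E\geq0\) a \(q\)-exceptional \(\mathbb{Q}\)-divisor on \(W\). Its support should contain the strict transform of the contracted divisor in the divisorial case, and the divisors over the flipping locus in the flip case.

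1. We check that \(F\coloneqq p^*L-q^*L'\) is \(q\)-exceptional. Indeed \(q_*p^*L=\phi_*L=L'\) as divisors, because \(\phi\) is an isomorphism in codimension one off the contracted divisor, and \(\Delta'=\phi_*\Delta\).

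2. We show \(F\geq0\). In the flip case, \(F\) is anti-nef over the base \(Z\) of the flip: \(-F\equiv_Z q^*L'-p^*L\), where \(q^*L'\) is nef over \(Z\) because \(L'\) is \(\phi\)-ample (relatively ample over \(Z\)), and \(-p^*L\) is nef over \(Z\). Together with \(p_*F\) being exceptional over \(Z\), the negativity lemma gives \(F\geq0\). In the divisorial case, with \(\psi\colon X\to X'\) the contraction, we have \(F=p^*(L-\psi^*L')\), and \(L-\psi^*L'=aD\) with \(a>0\) by negativity, since \(L\) is \(\psi\)-anti-ample.

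3. We conclude. Pseudo-effectivity of \(L\) passes to \(L'=\phi_*L\), so \(\textup{nd}_{X'}(L')\) is defined. Then
\[
\textup{nd}_X(L)=\textup{nd}_W(p^*L)=\textup{nd}_W(q^*L'+E)=\textup{nd}_{X'}(L'),
\]
where the first equality is (1) applied to \(p\), and the last is (2) applied to the resolution \(q\) with the \(q\)-exceptional divisor \(E\geq0\).

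The main obstacle is justifying the relative negativity lemma for bimeromorphic morphisms of compact K\"ahler spaces, together with the relative nefness over \(Z\) in the analytic setting. We would cite the K\"ahler MMP literature for this, for instance the version used in \cite{HX26} or \cite{DHY23}. A further subtlety is that the (1) and (2) statements are phrased for resolutions. If \(W\) is not directly the resolution we want, we pass to a further common resolution, which is harmless by (1).
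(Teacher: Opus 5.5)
Your proposal is correct and follows the same route as the paper. Both pass to a common resolution, use the negativity lemma to write $p^*(K_X+\Delta)=q^*(K_{X'}+\Delta')+E$ with $E\geq 0$ $q$-exceptional, note that pseudo-effectivity survives pushforward, and conclude with Proposition-Definition \ref{prop-defn-num} (1) and (2). Your case-by-case check that $E\geq 0$ just spells out what the paper cites as the negativity lemma; in the flip case the hypothesis you need is $h_*F=0$ for $h\colon W\to Z$, which follows from $p_*F=0$.
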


\begin{proof}
Choose a common resolution
\[
\xymatrix{&W\ar[dl]_p\ar[dr]^q&\\
X\ar@{-->}[rr]^\phi&&X'.}
\]
By the negativity lemma for the \((K_X+\Delta)\)-negative MMP, we have
\[
p^*(K_X+\Delta)=q^*(K_{X'}+\Delta')+F,
\]
where \(F\) is an effective \(q\)-exceptional divisor. Since
pseudo-effectivity is preserved by birational pushforward,
\(K_{X'}+\Delta'\) is pseudo-effective. Proposition-Definition
\ref{prop-defn-num} (1) and (2) now give the asserted equality.
\end{proof}

\subsection{Int-amplified endomorphisms}
The following characterization is due to Meng, together with an observation of Matsuzawa in the projective case; see \cite{Men20,MZ23-survey}. 
If one of the equivalent conditions holds, we say that the endomorphism \(f\) is \textit{int-amplified}.

\begin{proposition}[{see \cite[Theorem 1.1]{Zho21-Asian} and \cite[Proposition 2.10]{DZ23}}]\label{p:int}
Let \(f\colon X\to X\) be a surjective endomorphism of a compact klt K\"ahler variety. 
Then the following conditions are equivalent.
\begin{enumerate}
\item There exists a K\"ahler class \([\omega]\) such that \(f^*[\omega]-[\omega]\) is also a K\"ahler class.
\item All the eigenvalues of the linear operator $\varphi\coloneqq f^*|_{H^{1,1}(X,\mathbb{R})}$ have modulus greater than $1$.
\item There exists a big $(1,1)$-class $[\theta]$ such that $f^*[\theta]-[\theta]$ is big; that is, it can be represented by a K\"ahler current $T$.
\item For every nonempty nonzero $\varphi$-invariant convex cone $C$ in $H^{1,1}(X,\mathbb{R})$, one has
$\emptyset\neq (\varphi-\text{id})^{-1}(C)\subseteq C$.
\end{enumerate}
Moreover, if \(X\) is smooth, then the above conditions are also equivalent to the following condition. 
\begin{enumerate}
\item[(5)] The endomorphism  \(f\) has a dominant topological degree; that is, its topological degree is strictly larger than all the other dynamical degrees.
\end{enumerate}
\end{proposition}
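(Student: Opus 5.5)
The plan is to prove the equivalences via the cycle (1)$\Rightarrow$(2)$\Rightarrow$(4)$\Rightarrow$(3)$\Rightarrow$(1), working throughout with the linear operator $\varphi=f^*|_{H^{1,1}(X,\mathbb{R})}$. This operator is well defined on Bott--Chern classes of the klt space $X$, and since $f$ is finite surjective it preserves the nef cone $\mathrm{Nef}(X)$, the pseudo-effective cone, and the K\"ahler cone. The key linear-algebra fact we use is this: if $\varphi$ preserves a closed convex salient cone $K$ with nonempty interior, then, by the Perron--Frobenius theory of Birkhoff--Vandergraft, the spectral radius of $\varphi$ is an eigenvalue with an eigenvector in $K$. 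A similar statement holds for $\varphi^{-1}$, which preserves the dual cone up to the positive factor $\deg f$ via the projection formula $f_*f^*=\deg f$.

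For (1)$\Rightarrow$(2): set $\alpha=[\omega]$. Then $\varphi(\alpha)-\alpha$ is K\"ahler, hence lies in the interior of $\mathrm{Nef}(X)$. Consider the operator $\psi=\varphi^{-1}$. Since $f$ is finite, $f_*$ makes sense on classes, and we have $\psi=(\deg f)^{-1}f_*$ on $H^{1,1}$; in particular $\psi$ preserves $\mathrm{Nef}(X)$ (push-forward of nef classes under finite maps is nef on klt spaces, or one can argue via the dual cone of movable curves). Suppose some eigenvalue $\lambda$ of $\varphi$ satisfies $|\lambda|\le1$. Then $\psi$ has spectral radius $\rho\ge1$, and Perron--Frobenius gives a nonzero nef eigenvector $\beta$ of the dual operator $\psi^\vee=\varphi^\vee$ in the dual cone $\mathrm{Nef}(X)^\vee$ with eigenvalue $\rho^{-1}\le1$; more precisely, apply Perron--Frobenius to $\varphi^{\vee}$ restricted to the dual cone and pick the eigenvalue of smallest modulus via $\psi^\vee$. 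Pairing then gives
\[
0<\langle\varphi\alpha-\alpha,\beta\rangle=(\rho^{-1}-1)\langle\alpha,\beta\rangle\le0,
\]
a contradiction. Here we use that the interior point $\varphi\alpha-\alpha$ pairs strictly positively with every nonzero element of the dual cone.

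For (2)$\Rightarrow$(4): the eigenvalues of $\varphi-\mathrm{id}$ are nonzero, so $\varphi-\mathrm{id}$ is invertible and $(\varphi-\mathrm{id})^{-1}=\sum_{k\ge1}\varphi^{-k}$. This series converges because the spectral radius of $\varphi^{-1}$ is $<1$. If $C$ is a $\varphi$-invariant convex cone that is also closed, then the partial sums map $C$ into $C$, so the limit does too, giving both nonemptiness and the inclusion. In general we may have to replace $C$ by its closure and argue via $\varphi^{-1}(C)=C$ when $C$ is finite-dimensional and $\varphi$-invariant; this point needs care, and I would follow Meng's argument here. For (4)$\Rightarrow$(3) and (4)$\Rightarrow$(1), apply (4) to $C$ equal to the K\"ahler cone, respectively the big cone. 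Both are $\varphi$-invariant since $f$ is finite surjective. We obtain $\alpha\in C$ with $\varphi\alpha-\alpha\in C$. For (3)$\Rightarrow$(1), apply the argument of (1)$\Rightarrow$(2) with $\mathrm{Nef}$ replaced by the pseudo-effective cone, whose interior is the big cone. This gives (2), and then (4) applied to the K\"ahler cone gives (1).

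For (5) with $X$ smooth: the dynamical degree $d_1(f)$ and the higher $d_p(f)$ are spectral radii of $f^*$ on $H^{p,p}$. Condition (2) applied to $f$ shows that all eigenvalues of $f_*=\deg f\cdot(f^*)^{-1}$ on $H^{1,1}$ have modulus $<\deg f$; by duality this gives $d_{n-1}(f)<d_n(f)$. Log-concavity of $p\mapsto d_p$ then yields $d_p<d_n$ for all $p<n$. Conversely, if $d_{n-1}<d_n$, then all eigenvalues of $f^*$ on $H^{1,1}$ have modulus $>1$, since $f^*$ on $H^{1,1}$ and $f_*$ on $H^{n-1,n-1}$ are adjoint. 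The main obstacle is the Perron--Frobenius/duality step in (1)$\Rightarrow$(2) on singular spaces, namely making sure that $f_*$ preserves the relevant cones on a klt K\"ahler space. I would handle this via the projection formula and by passing to a resolution where needed.
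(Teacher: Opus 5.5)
The paper gives no proof of its own here. It quotes \cite[Theorem 1.1]{Zho21-Asian}, the K\"ahler version of Meng's Perron--Frobenius argument on $H^{1,1}_{\mathrm{BC}}$, and \cite[Proposition 2.10]{DZ23}. Your cycle of implications follows that standard route: the Birkhoff--Vandergraft step for $\varphi^{-1}$ on a dual cone, and for (5) the identity $d_{n-1}(f)=\rho(f_*|_{H^{1,1}})=\deg f/\min_i|\lambda_i|$ combined with log-concavity. The linear algebra is right, with one slip: in (1)$\Rightarrow$(2) the Perron--Frobenius eigenvector is for $\psi^\vee=(\varphi^\vee)^{-1}$, not $\varphi^\vee$, and it lies in the dual cone rather than being nef. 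Two points are left open and need closing.

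\emph{Non-closed cones in (2)$\Rightarrow$(4).} Statement (4) quantifies over all cones with $\varphi(C)=C$, so you cannot defer this case. That reading of ``invariant'' is forced: with only $\varphi(C)\subseteq C$, (4) fails for $\varphi(x,y)=(2x+y,2y)$ and $C=\{x\geq y\geq 0\}$. The general case is short.
\begin{itemize}
\item For $y\in C$, let $U$ be the span of $\{\varphi^{-k}y\}_{k\geq1}$ and $m=\dim U$.
\item Let $D\subseteq C$ be the convex cone generated by these vectors; the inclusion uses $\varphi^{-1}(C)=C$.
\item Since $\varphi^{-1}y,\dots,\varphi^{-m}y$ is a basis of $U$, the partial sum $\sum_{k=1}^m\varphi^{-k}y$ lies in the relative interior $\mathrm{ri}(D)$.
\item The tail $\sum_{k>m}\varphi^{-k}y$ lies in $\overline D$, and $\mathrm{ri}(D)+\overline D\subseteq \mathrm{ri}(D)$.
\end{itemize}
Hence $(\varphi-\mathrm{id})^{-1}y\in D\subseteq C$.

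\emph{Two-sided invariance of the cones.} This is the real geometric input, and you only assert it. Your suggested justification for the nef cone via movable curves does not work: in the K\"ahler setting the nef cone is not dual to a cone of movable curves. You can avoid the nef cone altogether.
\begin{itemize}
\item (1)$\Rightarrow$(3) is trivial.
\item Your (3)$\Rightarrow$(2) argument on the pseudo-effective cone only needs $(\deg f)^{-1}f_*$ to preserve pseudo-effective classes. This is immediate, because traces of psh functions under finite maps are psh. Closedness and salience of that cone are also elementary.
\end{itemize}
What cannot be avoided, for (2)$\Rightarrow$(1), is the implication ``$f^*\beta$ K\"ahler $\Rightarrow$ $\beta$ K\"ahler.'' Here is why it holds. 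A K\"ahler form $\eta\in f^*\beta$ satisfies $\eta\geq c\,f^*\omega_X$, so $f_*\eta\geq c\deg(f)\,\omega_X$. Thus $f_*\eta$ is a K\"ahler current with continuous local potentials in $(\deg f)\beta$, and Richberg's approximation turns it into a K\"ahler form.

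With $\varphi^{-1}(\mathcal K)\subseteq\mathcal K$ in hand, take any K\"ahler class $[\omega_0]$. The class $\alpha=\sum_{k\geq1}\varphi^{-k}[\omega_0]=\varphi^{-1}[\omega_0]+\varphi^{-1}\alpha$ is K\"ahler plus nef, hence K\"ahler. Since $\varphi\alpha-\alpha=[\omega_0]$, this gives (1) directly, without needing forward invariance of $\mathcal K$.
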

We note that, since \(X\) has only klt and hence rational singularities, the endomorphism in Proposition \ref{p:int} is finite;  see \cite[Lemma 2.10]{Zho21-Asian}. 

There are two elementary examples of int-amplified endomorphisms.
First, any non-isomorphic surjective endomorphism of a normal projective variety of Picard number one is int-amplified.
Second, the product of two int-amplified endomorphisms is again int-amplified.
See \cite{Men20,MZ23-survey,Zho21-Asian} and the references therein for more details.

\subsection{Quasi-\'etale covers}\label{sec-qe}
Let \(X\) be a normal compact K\"ahler variety.
A finite cover \(\widehat{X}\to X\) is called \textit{quasi-\'etale} if it is \'etale in codimension one. 
The \textit{augmented irregularity} \(\widehat{q}(X)\) 
of a compact klt K\"ahler variety \(X\) is defined by
\[
\widehat{q}(X)
\coloneqq
\sup \left\{
q(\widehat{X})
=
h^1(\widehat{X},\mathcal{O}_{\widehat{X}})
\ \middle|\ 
\widehat{X}\to X \text{ is a finite quasi-\'etale cover}
\right\}.
\]
A normal compact K\"ahler variety \(X\) is called \textit{maximally quasi-\'etale} if the homomorphism of algebraic fundamental groups
\[
\widehat{\pi}_1(X_{\textup{reg}})\to\widehat{\pi}_1(X)
\]
induced by \(i_*\colon\pi_1(X_{\textup{reg}})\to\pi_1(X)\) is an isomorphism, where \(i\colon X_{\textup{reg}}\to X\) is the natural inclusion. 
By \cite[Theorem 2.5]{FO25} (cf.~\cite[Theorem 1.5]{GKP16-duke}), every compact klt K\"ahler variety admits a finite quasi-\'etale cover \(\widehat{X}\to X\) such that \(\widehat{X}\) is maximally quasi-\'etale. 
In particular, every finite quasi-\'etale cover of \(\widehat{X}\) is \'etale. 
In this case, we also call \(\widehat{X}\to X\) a \textit{maximally quasi-\'etale cover} of \(X\).
Note that maximally quasi-\'etale covers are not necessarily unique.

We end this section with the following theorem of Cao--Deng--Matsumura \cite[Theorem 1.1]{CDM26} 
(cf.~\cite{HP19,IMZ23}), which will be used in the proof of Theorem \ref{t:psef-rc}.
\begin{theorem}\label{t:CDM}
Let \(X\) be a maximally quasi-\'etale compact klt K\"ahler variety.
Assume that \(\mathcal{E}\) is a pseudo-effective reflexive sheaf with vanishing first Chern class \(c_1(\mathcal{E})=0\).
Then \(\mathcal{E}\) is locally free and numerically flat on $X$.
\end{theorem}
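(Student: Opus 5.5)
The plan is to prove the statement in two stages. First, show that \(\mathcal{E}\) restricted to \(X_{\textup{reg}}\) is a flat bundle, or at least an extension of hermitian flat bundles, so that it comes from a representation of \(\pi_1(X_{\textup{reg}})\). Second, use the maximally quasi-\'etale hypothesis to extend this flat structure across \(X_{\sing}\). Numerical flatness will then follow because a locally free sheaf with a filtration whose graded pieces are hermitian flat is numerically flat.

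\textbf{Step 1: slope semistability and vanishing first Chern class.} For any K\"ahler class \([\omega]\), take the metrics \(h_m\) on \(\Sym^m\mathcal{E}|_{X_0}\) from Definition~\ref{df-pos}. Every torsion-free quotient \(\mathcal{Q}\) of \(\mathcal{E}\) gives a quotient \(\Sym^m\mathcal{Q}\) of \(\Sym^m\mathcal{E}\). The induced metric on \(\det\Sym^m\mathcal{Q}\) has curvature bounded below by a fixed multiple of \(-\omega\), with a constant independent of \(m\), while \(c_1(\det\Sym^m\mathcal{Q})\) grows like \(m\cdot(\text{polynomial in }m)\). Dividing by this growth and letting \(m\to\infty\) yields \(c_1(\mathcal{Q})\cdot[\omega]^{n-1}\ge 0\), using \cite[Proposition 3.4]{Mat23} for the determinant. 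Hence \(\mu^{\min}_\omega(\mathcal{E})\ge 0=\mu_\omega(\mathcal{E})\), so \(\mathcal{E}\) is \(\omega\)-semistable for every \(\omega\). The Jordan--H\"older factors \(\mathcal{G}_i\) are stable reflexive sheaves with \(c_1(\mathcal{G}_i)\cdot\omega^{n-1}=0\) and are again pseudo-effective, being quotients. Each \(\det\mathcal{G}_i\) is then a pseudo-effective rank one reflexive sheaf with degree zero against every K\"ahler class. I would argue that its class is zero, passing to a resolution and using Proposition-Definition~\ref{prop-defn-num}(3) to kill the modified-nef part.

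\textbf{Step 2: flatness on the regular locus (the main obstacle).} Here I would prove the Bogomolov-type equality \(c_2(\mathcal{G}_i)\cdot\omega^{n-2}=0\) for each graded piece. Then the singular Kähler Donaldson--Uhlenbeck--Yau correspondence, via admissible Hermitian--Einstein metrics on \(X_{\textup{reg}}\) in the style of Chen--Wentworth and GKP, shows that \(\mathcal{G}_i|_{X_{\textup{reg}}}\) is hermitian flat. The equality \(c_2\cdot\omega^{n-2}=0\) should come from pseudo-effectivity as follows. Twisting, \(\mathcal{O}_{\mathbb{P}(\mathcal{G}_i)}(1)\) on a resolution of the projectivization is a limit of classes with curvature bounded by \(-\varepsilon/m\). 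Its top self-intersection against the pulled-back \(\omega^{n-2}\), which computes \(-c_2\) modulo \(c_1^2=0\), is therefore \(\ge 0\), and Bogomolov's inequality gives the reverse. I expect the delicate point to be making this intersection-theoretic argument rigorous on the singular \(X\): positivity of mobile intersections on \(\mathbb{P}(\mathcal{G}_i)\) over a resolution, and comparing the orbifold second Chern class with the one used in the Kähler Bogomolov inequality.

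\textbf{Step 3: extension across \(X_{\sing}\) and conclusion.} The flat pieces correspond to representations \(\rho_i\colon\pi_1(X_{\textup{reg}})\to U(r_i)\), and \(\mathcal{E}|_{X_{\textup{reg}}}\) corresponds to a linear representation with these as its unitary graded parts. The group \(\pi_1(X_{\textup{reg}})\) is finitely generated, so its image is residually finite by Malcev. Since \(\widehat{\pi}_1(X_{\textup{reg}})\to\widehat{\pi}_1(X)\) is an isomorphism, the representation factors through \(\pi_1(X)\), as in \cite[Theorem 1.14]{GKP16-duke}. Thus there is a flat locally free sheaf on \(X\) agreeing with \(\mathcal{E}\) on \(X_{\textup{reg}}\). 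Both sheaves are reflexive and \(\codim X_{\sing}\ge 2\), so \(\mathcal{E}\) is this locally free sheaf. Finally, a flat bundle with a filtration by unitary flat subbundles is numerically flat.
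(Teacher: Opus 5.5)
The paper does not prove this statement. It is quoted from \cite[Theorem 1.1]{CDM26} and used as a black box, so your proposal has to stand on its own. Its overall architecture is reasonable, and the group-theoretic part of Step 3 is correct: finitely generated linear groups are residually finite, so $\widehat{\pi}_1(X_{\reg})\cong\widehat{\pi}_1(X)$ forces every linear representation of $\pi_1(X_{\reg})$ to kill $\ker(\pi_1(X_{\reg})\to\pi_1(X))$.

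\textbf{The genuine gap is the key inequality of Step 2}, namely $\xi^{r+1}\cdot\pi^*\omega^{n-2}\geq 0$ for the tautological class $\xi$. Pseudo-effectivity gives, for each $m$, a singular metric on $\mathcal{O}_{\mathbb{P}}(1)$ with curvature $\geq-\frac1m\pi^*\omega$. This makes $\xi$ pseudo-effective, but the currents may be singular along subvarieties lying over proper subsets of $X$, so $\xi$ need not be nef. A pseudo-effective class can have negative top self-intersection: already the class of a $(-1)$-curve on a surface has square $-1$. Positivity of mobile products only yields $\langle\xi^{r+1}\rangle\cdot\pi^*\omega^{n-2}\geq0$. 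However, $\langle\xi^{r+1}\rangle$ differs from $\xi^{r+1}$ by contributions carried by the non-nef locus of $\xi$. Nothing prevents that locus from lying over divisors or codimension-two subsets of $X$, where intersecting with $\pi^*\omega^{n-2}$ does not remove it. So the inequality you need is essentially equivalent to nefness of $\xi$, i.e.\ to the conclusion. The problem already occurs for vector bundles on smooth projective varieties; it is not merely the singular-space technicality you flag. What is missing is an argument that genuinely upgrades pseudo-effectivity plus $c_1(\mathcal{E})=0$ to nefness or flatness of $\mathcal{E}|_{X_{\reg}}$. For instance, one could work analytically with the metrics themselves, exploiting that the induced metrics on $\det\mathcal{E}$ are forced to become flat. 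Supplying this is the substantive content of the cited theorem.

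Two further steps also need repair.
\begin{itemize}
\item In Step 1, the Jordan--H\"older factors $\mathcal{G}_i=\mathcal{E}_i/\mathcal{E}_{i-1}$ are subquotients of $\mathcal{E}$, not quotients, and pseudo-effectivity does not pass to subsheaves. Only the last factor is a priori pseudo-effective, so your arguments for $c_1(\mathcal{G}_i)=0$ and for $c_2(\mathcal{G}_i)\cdot\omega^{n-2}=0$ are unjustified as stated. The standard fix is to prove the Bogomolov equality for $\mathcal{E}$ itself. One then deduces $c_1(\mathcal{G}_i)\equiv0$ and vanishing discriminants of the $\mathcal{G}_i$ from additivity of the discriminant, the Hodge index theorem, and the orbifold Bogomolov inequality for each stable factor (cf.\ \cite{FO25}).
\item In Step 3, $X_{\reg}$ is not compact. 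You therefore cannot conclude that an iterated holomorphic extension of unitary flat bundles on $X_{\reg}$ is itself given by a representation of $\pi_1(X_{\reg})$, because the Hodge-theoretic identification of such extensions with extensions of local systems needs compactness. You would first have to extend each unitary graded piece to $X$, which your residual-finiteness argument does. You would then have to show separately that the successive extensions, and the reflexivity of the successive quotients, persist across $X_{\sing}$.
\end{itemize}

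A minor point: the curvature bound on $\det\Sym^m\mathcal{Q}$ is $-\textup{rk}(\Sym^m\mathcal{Q})\,\omega$, not independent of $m$. Comparing with $c_1(\Sym^m\mathcal{Q})=\binom{m+q-1}{q}c_1(\mathcal{Q})$, where $q=\textup{rk}\,\mathcal{Q}$, still gives $c_1(\mathcal{Q})\cdot\omega^{n-1}\geq-\frac{q}{m}\,\omega^n$, which suffices.
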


\section{Dynamical positivity of tangent sheaves: Proof of Theorem \ref{t:pos}}
This section is devoted to the proof of Theorem \ref{t:pos-refine} below, which is a refinement of 
Theorem \ref{t:pos}. 
We also give an application on the dynamical characterization of \(Q\)-complex tori (see Proposition \ref{p:Q-torus}).

\begin{theorem}\label{t:pos-refine}
Let $X$ be a normal compact K\"ahler variety admitting an int-amplified endomorphism 
$f\colon X \to X$, and let $\omega$ be a K\"ahler form on $X$. 
Then, for every $\varepsilon>0$, 
there exists a singular Hermitian metric $h_\varepsilon$ on the tangent sheaf $T_X$
satisfying the following properties: 
\begin{itemize}
\item $\sqrt{-1}\Theta_{h_\varepsilon}\geq -\varepsilon \omega \otimes\textup{Id}_{T_X}$; 

\item $h_\varepsilon$ is a smooth Hermitian metric on a Zariski open subset $X_\varepsilon$ 
whose complement $X \setminus X_\varepsilon$ is contained in 
\[
X_{\textup{sing}}\cup \textup{Sing}(f^{s_\varepsilon}) 
\]
for some integer $s_\varepsilon$, 
where $\textup{Sing}(f^{s_\varepsilon})$ denotes the non-smooth locus of $f^{s_\varepsilon}\colon X\to X$.
\end{itemize}

In particular, the tangent sheaf $T_X$ is pseudo-effective. 
Moreover, if $X$ is smooth, 
then the image of the non-nef locus of 
\(\mathcal{O}_{\mathbb{P}(T_X)}(1)\)
under the natural projection $\mathbb{P}(T_X) \to X$
is contained in the union $\cup_{s \in \mathbb{N}} \textup{Sing}(f^s)$
of the non-smooth loci of $f^s\colon X\to X$.
\end{theorem}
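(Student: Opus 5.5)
The idea is to pull back a fixed smooth metric on $T_X$ along iterates of $f$, using the differential $df^s$, and to exploit int-amplification to make the curvature error shrink. Fix a smooth Hermitian metric $g$ on $T_X$ over $X_{\reg}$. Since $X$ is compact, we may choose $g$ as the restriction of a metric induced from local embeddings, so that its curvature is bounded below, say $\sqrt{-1}\Theta_g\geq -C\omega\otimes\id$. For $s\geq1$, the map $f^s$ is finite, and it is étale over $X\setminus(X_{\sing}\cup\textup{Sing}(f^s))$ away from a codimension-one ramification divisor; here I use Zariski–Nagata purity on the smooth locus. On the open set $U_s$ where $f^s$ is étale and both source and target are smooth, the differential $d f^s\colon T_X\to (f^s)^*T_X$ is an isomorphism. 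I would then define
\[
h_s\coloneqq (df^s)^*(f^s)^*g ,
\]
that is, $|v|_{h_s}\coloneqq|df^s(v)|_{g}$. This is a smooth metric on $U_s$ whose curvature is $(f^s)^*\Theta_g$ transported by the isomorphism, so on $U_s$
\[
\sqrt{-1}\Theta_{h_s}\geq -C (f^s)^*\omega\otimes\id .
\]
Along the ramification locus, $df^s$ is only an injective sheaf map, so $h_s$ degenerates but is still defined. Dually, for a local section $e$ of $\Omega$, $\log|e|_{h_s^\vee}$ acquires singularities of the form $\log$ of a holomorphic Jacobian factor, which only helps psh-ness. I would therefore expect the inequality $\sqrt{-1}\Theta_{h_s}\geq -C(f^s)^*\omega$ to hold across $X_0$ in the sense of the paper, with the codimension-two extension noted after Definition~\ref{d:metric}.

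The problem is that $(f^s)^*\omega$ grows, so it must be divided out. This is where int-amplification enters. By Proposition~\ref{p:int}, every eigenvalue of $f^*$ on $H^{1,1}$ has modulus greater than $1$, which suggests working with $(f^s)_*$ or with normalizations. My first attempt: rather than pulling back $g$, push forward. Take the metric on $T_X$ induced via $f^s$ from the target side; equivalently, for $v\in T_{X,x}$, compare the norms at $x$ and at $f^s(x)$. The better choice, I think, is this. The condition to check is that $\log|e|_{h^\vee}+\varepsilon\varphi_\omega$ is psh. Int-amplification gives $f^*[\omega]-[\omega]$ Kähler; after replacing $\omega$ by this class we get $f^*\omega\geq(1+\delta)\omega$ up to $dd^c$ of smooth functions. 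Iterating gives $(f^s)^*\omega = $ a large multiple of $\omega$ in cohomology. So the growth has the wrong sign for pullback, and I would instead use the \emph{dual} direction: a metric on $\Omega_X$ defined by $|e|_{x}\coloneqq$ the norm of the trace $(f^s)_*$ of the form. Concretely, set
\[
\psi_s(e)\coloneqq\tfrac{1}{\deg f^s}\log\textstyle\sum_{y\in f^{-s}(x)}|\cdots|
\]
chosen so that the curvature error becomes $C\,\frac{1}{d_n^s}(f^s)_*\omega$. Its mass is controlled by $d_{n-1}(f)^s/d_n(f)^s\to0$. This is exactly the dominant topological degree condition of Proposition~\ref{p:int}(5).

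The main obstacle is to turn this mass decay into a pointwise bound $-\varepsilon\omega$ rather than a small-mass current. My plan is to regularize $\frac{1}{d_n^s}(f^s)_*\omega$ (Demailly) and use $dd^c$-lemma corrections $\varphi_s$ with $\|\varphi_s\|\to0$, and to absorb the corrections into $h$ by multiplying by $e^{-\varphi_s}$. Smoothness off $X_{\sing}\cup\textup{Sing}(f^{s})$ follows since all constructions are smooth where $f^s$ is étale. In the smooth case, the statement about the non-nef locus of $\mathcal{O}_{\mathbb{P}(T_X)}(1)$ follows because $h_\varepsilon$ induces metrics on $\mathcal{O}(1)$ with curvature $\geq-\varepsilon\pi^*\omega$ that are smooth over $X_\varepsilon$. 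Pseudo-effectivity is then the remark following Definition~\ref{df-pos}.
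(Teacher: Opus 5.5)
There is a genuine gap. The metric that should carry the proof is never actually constructed, and the one construction you do write down goes in the wrong direction.

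\textbf{The pullback metric $h_s$.} With $|v|_{h_s}=|df^s(v)|_g$ you are restricting $(f^s)^*g$ to the subsheaf $T_X\subseteq (f^s)^{[*]}T_X$, and subsheaves do not inherit curvature lower bounds. Near a general point of the ramification divisor, $f^s$ looks like $(z_1,\dots,z_n)\mapsto(z_1^k,z_2,\dots,z_n)$ with $k\geq 2$. There one gets $|dz_1|_{h_s^\vee}\sim|z_1|^{-(k-1)}$. So $\log|e|_{h_s^\vee}$ is unbounded above and cannot be plurisubharmonic. The Jacobian factor hurts rather than helps, and $\sqrt{-1}\Theta_{h_s}\geq -C(f^s)^*\omega$ fails across the ramification.

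\textbf{The pushforward attempt.} Your $\frac{1}{\deg f^s}\log\sum_y|\cdots|$ is left unspecified. As written it is not a Hermitian metric, since it is homogeneous of degree $1/\deg f^s$ in $e$. An error term $\frac{C}{d_n^s}(f^s)_*\omega$ is what the geometric mean $\exp\bigl(\frac{1}{\deg f^s}(f^s)_*\log|\cdot|\bigr)$ produces, and that is not a norm. If you instead take an honest fibrewise sum of squares of $g^\vee$-norms of pulled-back forms, the local potential $C\phi(y)$ of the source-side error varies along the fibre. It cannot be factored out of the sum, so no curvature bound on the target follows.

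Hence the mass decay $d_{n-1}^s/d_n^s\to0$ and the regularization step address a problem you never reach. The mass decay also relies on Proposition~\ref{p:int}(5), which is available only for smooth $X$. Moreover, if you had a genuine metric with curvature $\geq -T_s$ for a closed positive current $T_s$ whose class tends to $0$, a quasi-psh twist via the $dd^c$-lemma would already finish, so the "main obstacle" you name is not the real one.

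\textbf{The missing idea.} Use int-amplification on the source side \emph{before} pushing forward. You wrote down the relevant inequality $f^*\omega\geq(1+\delta)\omega$ modulo $dd^c$ yourself, but its role is not to be divided out. It lets a fixed source-side error $C\omega$ be dominated by $\varepsilon(f^s)^*\omega$, which is a pullback from the target. Concretely:
\begin{enumerate}
\item Since Kähler forms are mutually comparable, you may choose $\omega$ and $r>1$ with $f^*[\omega]-r[\omega]$ Kähler. Then $(f^s)^*\omega-r^s\omega=(\text{positive form})+dd^cG_s$ with $G_s$ smooth.
\item Put on $(f^s)^{[*]}T_X$ the metric $g_s$ induced from $g$ through the generically isomorphic map $T_X\to(f^s)^{[*]}T_X$. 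Then $|(f^s)^*e|_{g_s^\vee}$ is the $g^\vee$-norm of the pulled-back $1$-form. This satisfies $\sqrt{-1}\Theta_{g_s}\geq -C\omega$ on all of $X_0$, because along ramification the dual norm only tends to zero.
\item Once $\varepsilon r^s>C$, the inequality $C\omega\leq\varepsilon(f^s)^*\omega-\varepsilon\, dd^cG_s$ shows that twisting by the weight $e^{\varepsilon G_s}$ (in the appropriate direction) gives curvature $\geq-\varepsilon(f^s)^*\omega$. That is, $\log|(f^s)^*e|+\varepsilon(f^s)^*\phi$ is psh.
\item The weight is now pulled back from the target, so it factors out of $\varphi_e(x)=\sum_{y\in f^{-s}(x)}|(f^s)^*e|^2(y)$. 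The trace of a log-psh function under a finite map is log-psh, so $\frac12\log\varphi_e+\varepsilon\phi$ is psh. This is exactly $\sqrt{-1}\Theta\geq-\varepsilon\omega$.
\end{enumerate}
No mass estimates, regularization or dynamical degrees are needed, and smoothness away from the singular and ramification loci is built in. Your final deductions (pseudo-effectivity, and the statement on the non-nef locus of $\mathcal{O}_{\mathbb{P}(T_X)}(1)$) then go through.
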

\begin{proof}[Proof of Theorem \ref{t:pos-refine}]\label{s:proof}
Let \(X\) be a normal compact K\"ahler variety, and let \(f\colon X\to X\) be an int-amplified endomorphism.
Fix a K\"ahler form \(\omega\) on $X$ such that 
the class \(f^*[\omega]-[\omega]\) contains a K\"ahler form.
By the openness of the K\"ahler cone, we can choose a real number \(r>1\) such that 
\([f^*\omega-r\omega]\) also contains a K\"ahler form, say \(\omega'\).  
Then, for every \(s\in\mathbb{N}\), there exists a globally defined smooth real-valued function \(G_s\) such that 
\begin{align}\label{eq-1}
(f^s)^*\omega-r^s\omega
=
\left(\sum_{i=0}^{s-1}r^{s-1-i}(f^i)^*\omega'\right)+dd^cG_s.
\end{align}
Let \(X_0\) be the big open subset as in Definition \ref{d:metric}.
By \cite[Lemma 3.3]{Ou22}, 
there exists a smooth Hermitian metric \(g\) on \(T_X|_{X_0}\) 
such that 
\begin{align}\label{eq-2}
\sqrt{-1}\Theta_{g}(T_X)
\geq 
-C\omega\otimes\textup{Id}_{T_X}
\end{align}
holds on \(X_0\) for some positive constant \(C\). 

For each \(s\in\mathbb{N}\), there is a natural morphism
\[
(f^s)^* \Omega_X\to \Omega_X
\]
defined on $X$. 
Taking double duals, we obtain an injective sheaf morphism 
\[
0\to T_X \to (f^s)^{[*]}T_X
:=
\left((f^s)^*T_X|_{X_0}\right)^{\vee\vee}. 
\]
By construction, this morphism is an isomorphism over 
\(X_0 \setminus \textup{Sing}(f^s)\), 
that is, over the smooth locus of \(f^s\).
In particular, it is generically an isomorphism.
Let \(g_s\) be the singular Hermitian metric on \((f^s)^{[*]}T_X\) induced by the above morphism and by the metric \(g\). 
By construction, the Hermitian metric \(g_s\) is smooth on 
\[
X_0 \setminus \textup{Sing}(f^s).
\]
Moreover, we obtain 
\[
\sqrt{-1}\Theta_{g_{s}}((f^s)^{[*]}T_X)
\geq 
-C\omega\otimes\textup{Id}_{(f^s)^{[*]}T_X} 
\]
by \eqref{eq-2} and the standard curvature property of quotient metrics; 
see \cite[Proof of Lemma 2.4.3 (3)]{PT18}.

Fix \(\varepsilon>0\) and choose \(s=s_\varepsilon\) sufficiently large so that
$
\varepsilon r^s>C.
$
We twist the metric \(g_s\) by the weight \(\exp(-\varepsilon G_s)\). 
Using \eqref{eq-1}, we obtain
\begin{align*}
&\sqrt{-1}\Theta_{g_{s}\exp(-\varepsilon G_s)}
\bigl((f^s)^{[*]}T_X\bigr)
+\varepsilon (f^s)^*\omega\otimes\textup{Id}_{(f^s)^{[*]}T_X} \\
&\geq
\left((\varepsilon r^s-C)\omega
+\sum_{i=0}^{s-1}r^{s-1-i}\varepsilon (f^i)^*\omega'\right)
\otimes\textup{Id}_{(f^s)^{[*]}T_X}
\geq 0.
\end{align*}
This means that, if \(U\subseteq X\) is an open subset and
$
w\in H^0\bigl(U,((f^s)^{[*]}T_X)^\vee\bigr),
$
then the function
\[
\log|w|_{(g_s\exp(-\varepsilon G_s))^\vee}
+
\varepsilon (f^s)^*\phi
\]
is plurisubharmonic on \(U\), where \(\phi\) is a local potential of \(\omega\). 
Here
\[
(g_s\exp(-\varepsilon G_s))^\vee
=
g_s^\vee\exp(\varepsilon G_s)
\]
denotes the induced metric on the dual sheaf.

We now construct a singular Hermitian metric on \(T_X\) by averaging over the finite fibres of \(f^s\). 
Let \(e\in H^0(U,T_X^\vee)\) be a local section. 
Then \((f^s)^*e\) is a local section of \(((f^s)^{[*]}T_X)^\vee\) over \(f^{-s}(U)\). 
The function
\[
|(f^s)^*e|_{(g_s\exp(-\varepsilon G_s))^\vee}
\]
is a priori defined on \(f^{-s}(U)\cap X_0\), 
but extends to $f^{-s}(U)$ 
by the plurisubharmonicity obtained above. 
We define a function \(\varphi_e\) on \(U\) by
\[
\varphi_e(p)
\coloneqq
\sum_{q\in f^{-s}(p)}
|(f^s)^*e|^2_{(g_s\exp(-\varepsilon G_s))^\vee}(q),
\]
where the sum is taken with multiplicities.
Since \(f^s\) is finite, this is a finite sum. 
Moreover, on the open set
$
X_0\setminus \textup{Sing}(f^s),
$
the map \(f^s\) is smooth and the metric \(g_s\) is smooth; hence \(\varphi_e\) is smooth there.

We claim that the functions \(\varphi_e\) define a singular Hermitian metric \(\chi\) on \(T_X\).
More precisely, we define the dual metric \(\chi^\vee\) on \(T_X^\vee\) by
\[
|e|^2_{\chi^\vee,x}
\coloneqq
\varphi_e(x)
\]
and we show that this metric $\chi^\vee $ is a singular Hermitian metric with the desired properties. 
Fiberwise, $\chi^\vee $ gives a Hermitian squared norm,  
because \(\varphi_e(x)\) is a finite sum of Hermitian squared norms. 
Thus, the first condition in Definition \ref{d:metric} is satisfied. 
The measurability condition also follows from the construction, since a finite sum of measurable functions is measurable. 
Therefore \(\chi\) is a singular Hermitian metric on \(T_X\).

It remains to check the positivity lower bound. 
For every local section \(e\in H^0(U,T_X^\vee)\), we know that
\[
\log|(f^s)^*e|_{(g_s\exp(-\varepsilon G_s))^\vee}
+
\varepsilon (f^s)^*\phi
\]
is plurisubharmonic on \(f^{-s}(U)\). 
Hence
\[
\exp\left(
2\varepsilon (f^s)^*\phi
\right)
|(f^s)^*e|^2_{(g_s\exp(-\varepsilon G_s))^\vee}
\]
is locally the exponential of a plurisubharmonic function. 
By the standard trace argument for finite morphisms, together with \cite[Proposition 1.13]{Dem85}, we obtain that
\[
\log\left(
(f^s)_*
\left(
\exp\left(2\varepsilon (f^s)^*\phi\right)
|(f^s)^*e|^2_{(g_s\exp(-\varepsilon G_s))^\vee}
\right)
\right)
\]
is plurisubharmonic on \(U\).
By the definition of \(\varphi_e\), we have
\[
(f^s)_*
\left(
\exp\left(2\varepsilon (f^s)^*\phi\right)
|(f^s)^*e|^2_{(g_s\exp(-\varepsilon G_s))^\vee}
\right)
=
\exp(2\varepsilon\phi)\varphi_e.
\]
Therefore, the function
\[
\log|e|_{\chi^\vee}+\varepsilon\phi
=
\frac{1}{2}\log\varphi_e+\varepsilon\phi
\]
is psh. 
This is precisely the curvature inequality
\[
\sqrt{-1}\Theta_{\chi}
\geq
-\varepsilon\omega\otimes\textup{Id}_{T_X}.
\]

Finally, by construction, the metric \(\chi\) is smooth on the Zariski open subset
$
X_\varepsilon
\coloneqq
X_0\setminus \textup{Sing}(f^{s_\varepsilon}).
$
Thus \(\chi\) satisfies both conditions of Theorem \ref{t:pos-refine}. 
Setting \(h_\varepsilon\coloneqq \chi\), we obtain the desired singular Hermitian metric on \(T_X\).
\end{proof}

We end this section with the following application of Theorems \ref{t:pos} and \ref{t:CDM}.
\begin{proposition}[{cf.~\cite[Theorem 1.21]{GKP16-duke},\cite[Lemma 6.1]{Men20}}]\label{p:Q-torus}
Let \(f\colon X\to X\) be an int-amplified endomorphism of an \(n\)-dimensional normal \(\mathbb{Q}\)-Gorenstein  compact K\"ahler variety.
Assume that either \(X\) is klt, or \(X\) is non-uniruled.
If the canonical divisor \(K_X\) is pseudo-effective, then \(X\) is a quasi-\'etale quotient of a complex torus.
\end{proposition}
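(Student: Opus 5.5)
We first show that $K_X$ is numerically trivial. By Theorem \ref{t:pos}, $T_X$ is pseudo-effective, and by \cite[Proposition 3.4]{Mat23} its determinant $-K_X$ (more precisely $\det T_X$, which agrees with $-K_X$ up to the $\mathbb{Q}$-Gorenstein structure) is pseudo-effective. Together with the hypothesis that $K_X$ is pseudo-effective, the class $c_1(K_X)$ and its negative are both pseudo-effective, so $c_1(K_X)=0$ in $H^{1,1}_{\mathrm{BC}}(X,\mathbb{R})$. Here we argue on a resolution (or by pairing with K\"ahler classes): a pseudo-effective class whose negative is also pseudo-effective has zero intersection with $\omega^{n-1}$ against every K\"ahler class, and hence vanishes.

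Next we reduce to a maximally quasi-\'etale situation and invoke Theorem \ref{t:CDM}. In the klt case, take a maximally quasi-\'etale cover $\widehat X\to X$ to which $f$ lifts, using Theorem \ref{thm-lift} as stated in the introduction. The lift $\widehat f$ is again int-amplified, because pulling back a class of the form $f^*[\omega]-[\omega]$ that is K\"ahler yields a K\"ahler class on the finite cover. Theorem \ref{t:pos} then makes $T_{\widehat X}$ pseudo-effective, and $c_1(T_{\widehat X})=0$ because the cover is quasi-\'etale. Theorem \ref{t:CDM} shows that $T_{\widehat X}$ is locally free and numerically flat. By the solution of the Zariski--Lipman problem for klt spaces, $\widehat X$ is smooth. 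A compact K\"ahler manifold with numerically flat tangent bundle has vanishing Chern classes, so $c_1=c_2=0$, and Beauville--Yau (or the structure of numerically flat bundles) shows that it is an \'etale quotient of a torus. Hence $X$ is a quasi-\'etale quotient of a torus.

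In the non-uniruled case, we first show that $X$ has klt (indeed canonical) singularities and then reduce to the previous case. Since $K_X\equiv0$ is $\mathbb{Q}$-Cartier, write $f^*K_X=K_X-R_f$ with $R_f$ the ramification divisor. Numerical triviality forces $R_f\equiv 0$, so $R_f=0$ and $f$ is quasi-\'etale. Following \cite[Lemma 6.1]{Men20}, we compare discrepancies on a resolution: for a quasi-\'etale int-amplified $f$, the log discrepancies of exceptional divisors are preserved under pullback, while the volume growth forces the non-klt centres to be totally invariant and contracted. Alternatively, non-uniruledness gives that $K_Y$ is pseudo-effective on a resolution $Y$ by BDPP/Ou, and comparing $K_Y=\mu^*K_X+\sum a_iE_i$ with $\mu^*K_X\equiv0$ via Proposition-Definition \ref{prop-defn-num} gives $a_i\ge0$. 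The main obstacle is this singularity step, since Theorem \ref{thm-lift} and Theorem \ref{t:CDM} need klt. I would first try to establish canonical singularities through the $K_Y$ pseudo-effectivity argument, because it requires no dynamics, and then conclude exactly as in the klt case.
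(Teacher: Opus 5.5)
Your overall architecture matches the paper's. Theorem \ref{t:pos} plus the determinant gives $K_X\equiv0$. The klt case goes through a maximally quasi-\'etale cover and Theorem \ref{t:CDM}. The non-uniruled case is reduced to the klt case by proving canonical singularities on a resolution using \cite{Ou25}.

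In the klt case you lift $f$ via Theorem \ref{thm-lift} and reapply Theorem \ref{t:pos}. The paper only uses that pseudo-effectivity of $T_X$ survives reflexive pullback along the quasi-\'etale cover, so no lifting is needed. Your route is nonetheless valid (your check that $\widehat f$ is int-amplified is correct). Your appeal to Zariski--Lipman and to the flat-torus theorem spells out what the paper compresses into ``our result follows''. One minor point: $K_X\cdot\omega^{n-1}=0$ alone does not kill a class. The right reason is that the sum of positive currents in $[K_X]$ and $[-K_X]$ is a positive current in the zero class, hence zero.

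The gap is in the non-uniruled case, exactly at the step that carries the content of the paper's proof. You write $K_Y\equiv\sum a_iE_i=E_1-E_2$, with $E_1,E_2\ge0$ exceptional and without common components. You then assert that pseudo-effectivity gives $a_i\ge0$ ``via Proposition-Definition \ref{prop-defn-num}''. Nothing there says that a pseudo-effective class represented by a $\mu$-exceptional divisor is effective. Two things must be supplied.

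\emph{First}, you must show that $[K_Y]$ is the class of an effective $\mu$-exceptional divisor $N$. The paper takes the divisorial Zariski decomposition $K_Y\equiv P+N$. Pairing with $\mu^*\alpha^{n-1}$ shows that $N$ is $\mu$-exceptional and $P\cdot\mu^*\alpha^{n-1}=0$. Kodaira's lemma and the Hodge index theorem then give $P\equiv0$. Alternatively, apply part (2) of Proposition-Definition \ref{prop-defn-num} to $[E_1]=[K_Y]+[E_2]$ and use monotonicity of mobile products to get $\mathrm{nd}_Y(K_Y)=0$, hence $P=0$.

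\emph{Second}, from $E_1\equiv E_2+N$ you still need to conclude $E_2=0$. This requires numerical independence of $\mu$-exceptional divisors, which follows for instance from the negativity lemma \cite[Lemma 1.3]{Wang21}. The paper instead cancels common components to obtain $E_1'\equiv E_2'$ with no common components, and reruns the modified-nef/Hodge-index argument.

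Your dynamical alternative cannot substitute for this. The observation $R_f=0$ is correct. However, log discrepancies under a quasi-\'etale map are multiplied by ramification indices rather than preserved. The claim that ``volume growth'' contracts non-klt centres is not an argument as it stands.
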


\begin{proof}
By Theorem \ref{t:pos}, the tangent sheaf \(T_X\) is pseudo-effective and hence its determinant \(-K_X\) is pseudo-effective as well.
Therefore, \(K_X\equiv 0\) holds.
Suppose first that \(X\) is klt and let \(\gamma\colon\widehat{X}\to X\) be a maximally quasi-\'etale cover.
The reflexive pullback \(T_{\widehat{X}}\cong (\gamma^*T_X)^{\vee\vee}\) remains pseudo-effective and has vanishing first Chern class.
By Theorem  \ref{t:CDM}, it is locally free and numerically flat so that our result follows. 
In the following, we assume that \(X\) is non-uniruled and we aim to show that \(X\) has only canonical singularities so that we can apply Theorem \ref{t:CDM} again.
Let \(\pi\colon\widetilde{X}\to X\) be a resolution of singularities so that \(\widetilde{X}\) is a non-uniruled compact K\"ahler manifold.
By \cite{Ou25}, the canonical divisor \(K_{\widetilde{X}}\) is pseudo-effective, and hence it admits a divisorial Zariski decomposition (see \cite{Bou04}):
\[
K_{\widetilde{X}}\equiv P+N,
\]
where \(P=P(K_{\widetilde{X}})\) is modified nef in the sense that \(P|_M\) is pseudo-effective for any prime divisor \(M\) and \(N=N(K_{\widetilde{X}})\geq 0\) is the negative part.
Fix any K\"ahler class \(\alpha\) on \(X\).
Since \(\pi_*K_{\widetilde{X}}=K_X\equiv 0\), we obtain from the projection formula that
\[
0=K_{\widetilde{X}}\cdot\pi^*\alpha^{n-1}=P\cdot\pi^*\alpha^{n-1}+N\cdot\pi^*\alpha^{n-1}.
\]
As both \(P\) and \(N\) are pseudo-effective, we get the vanishing \[
P\cdot\pi^*\alpha^{n-1}=N\cdot\pi^*\alpha^{n-1}=0.
\]
Since \(N\) is effective and \(\alpha\) is K\"ahler, each component of \(N\) is \(\pi\)-exceptional. 

We claim that \(P\equiv 0\).
By the analytic version of Kodaira's lemma (see, e.g., \cite[Lemma 2.7]{Zho23}), with \(\widetilde{X}\) further replaced a higher resolution, we can write \(\pi^*\alpha=\omega+[F]\) where \(\omega\) is some K\"ahler class and the support of \(F\geq0\) coincides with the non-K\"ahler locus of \(\pi^*\alpha\) which is \(\pi\)-exceptional. 
As \(P\) is modified nef, the vanishing \(P\cdot\pi^*\alpha^{n-1}=0\) implies \(P\cdot \omega\cdot \pi^*\alpha^{n-2}=0\). 
Inductively, we have \(P\cdot\omega^{n-1}=0\).
By the Hodge--Riemann theorem, \(P^2\cdot\omega^{n-2}\leq 0\) with the equality holding if and only if \(P\equiv 0\).
Fix a sufficiently large \(b\) such that \([P]+b\omega\) is a K\"ahler class.
Then 
\[
P^2\cdot\omega^{n-2}=P(P+b\omega)\omega^{n-2}\geq 0;\]
consequently, \(P\equiv 0\).
Now, we have 
\[
N\equiv K_{\widetilde{X}}=\pi^*K_X+E_1-E_2\equiv E_1-E_2.
\]
Therefore, \(E_1\equiv E_2+N\).
By possibly further canceling some common components of \(N\) and \(E_1\), we have \(E_1'\equiv E_2'\) such that \(E_1'\) and \(E_2'\) are positive linear combinations of \(\pi\)-exceptional divisors with no common components. Moreover, \(E_2'\geq E_2\) in the sense of currents. 
Then \(E_1'\cdot E\) is a positive closed \((2,2)\)-current for any \(\pi\)-exceptional divisor \(E\); hence \(E_1'\) is modified nef. 
By the same argument, we obtain \(E_2'\equiv 0\) 
and hence \(0\leq E_2\leq E_2'\equiv 0\). 
In particular, \(E_2=0\), and  \(X\) has canonical singularities as desired.
\end{proof}

\section{Rational connectedness and pseudo-effective tangent sheaves}\label{s:RC}
After preparing a few preliminary results, we prove Theorem \ref{t:psef-rc} in this section. 
We thank Masataka Iwai for communicating the ideas behind Propositions \ref{prop-virtually} and \ref{prop-irreg}.

We begin with the following result, 
which extends \cite[Theorem 2.1]{MQ25} and is based on the spirit of \cite{Mat25}.

\begin{proposition}\label{prop-virtually}
Let \(X\) be a compact klt K\"ahler variety with pseudo-effective tangent sheaf \(T_X\).
Then the image of any linear representation of its fundamental group \(\pi_1(X)\to\textup{GL}(n,\mathbb{C})\) is virtually abelian. 
\end{proposition}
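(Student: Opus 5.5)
Let \(\rho\colon\pi_1(X)\to\textup{GL}(n,\mathbb{C})\) be a linear representation and set \(G\coloneqq\rho(\pi_1(X))\). By Tits' alternative applied to the finitely generated linear group \(G\), either \(G\) is virtually solvable or it contains a non-abelian free group. The plan is to exclude the second alternative and then to upgrade ``virtually solvable'' to ``virtually abelian'', in both cases by playing the pseudo-effectivity of \(T_X\) against the existence of non-trivial holomorphic forms on suitable covers. Throughout, we may replace \(X\) by finite \'etale (or quasi-\'etale) covers, which replaces \(G\) by finite-index subgroups. Pseudo-effectivity of \(T_X\) survives reflexive pullback under such covers, exactly as in the proof of Proposition \ref{p:Q-torus}.

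The first step excludes large images. If \(G\) is not virtually solvable, then, after passing to the Zariski closure and a finite-index subgroup, we get a Zariski-dense representation into a semisimple group. By the factorization theorems for linear representations of K\"ahler groups, in the klt K\"ahler setting via a resolution \(\mu\colon\widetilde X\to X\) with \(\pi_1(\widetilde X)\cong\pi_1(X)\) since klt singularities are rationally chain connected fibrewise, we obtain one of two things. Either \(\rho\) factors through a fibration onto an orbifold curve of general type, or there exist non-zero symmetric differentials \(H^0(\widetilde X,\Sym^m\Omega_{\widetilde X})\neq 0\) coming from spectral/Higgs data, which give generically big subsheaves of \(\Sym^{[m]}\Omega_X\). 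In the first case we get a surjection \(T_X\to\) (something of negative degree on the base), and pulling back \(K_C>0\) contradicts pseudo-effectivity. In the second case, a non-zero section \(\sigma\) of \(\Sym^{[m]}\Omega_X\) is a morphism \(\Sym^{[m]}T_X\to\mathcal{O}_X\). We will use the principle from \cite{Mat25} that a pseudo-effective sheaf paired with a non-zero section of its dual forces the section to be nowhere vanishing in codimension one and the induced quotient \(\Sym^mT_X\to\mathcal{O}_X\) to be ``flat''. This is incompatible with the bigness of the spectral forms coming from a Zariski-dense semisimple representation. The cleanest version is to realize the semisimple part through a period map to a period domain or a harmonic map, whose nonconstancy produces a non-trivial semi-negative quotient of \(T_X\) of strictly negative degree against some movable class.

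The second step handles the solvable case. If \(G\) is virtually solvable, we pass to a finite cover so that \(G\) is solvable with connected Zariski closure, hence triangularizable. Abelianization then gives characters \(\pi_1(X)\to\mathbb{C}^*\). Non-abelianness would produce a non-trivial extension class, i.e.\ a non-trivial unipotent part governed by twisted \(H^1\) with non-unitary rank-one local systems. Using Simpson/Beauville-type results, non-unitary characters that appear with non-zero twisted cohomology lie in translates of subtori of the character variety pulled back from fibrations onto curves of genus \(\ge 2\), or come from holomorphic 1-forms. The former is again excluded by pseudo-effectivity as in the first step. The latter forces all relevant characters to factor through the Albanese map, which for pseudo-effective \(T_X\) is a submersion-like map onto a torus with rationally connected-type fibres (cf.\ the structure theorem \cite[Theorem 1.6]{MZ-TJM} / Theorem \ref{t:psef-strc}). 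So \(\pi_1\) is, up to finite index, an extension of a free abelian group by the image of fibre groups. Fibre representations have finite image, so \(G\) is virtually a quotient of \(\mathbb{Z}^{2q}\), hence virtually abelian.

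The main obstacle is the first step: converting the Zariski-dense semisimple representation into a concrete negative quotient of \(T_X\) contradicting Definition \ref{df-pos}. I would first try to reduce to the case of a rigid, hence (by Simpson/Esnault--Groechenig-type arguments) variation-of-Hodge-structure representation, where the Griffiths curvature negativity of the horizontal tangent bundle of the period domain gives the required negative quotient directly. Non-rigid representations would be handled via Zuo/Katzarkov spectral covers, yielding either symmetric differentials or a fibration onto a curve of general type.
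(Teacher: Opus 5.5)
Your proposal has a genuine gap exactly where you locate the main obstacle, and the mechanism you propose for closing it would not work.

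\textbf{The semisimple step.} In the non-virtually-solvable case, the second branch of your dichotomy gives no contradiction as stated. A non-zero section of $\textup{Sym}^{[m]}\Omega_X$, i.e.\ a map $\textup{Sym}^{[m]}T_X\to\mathcal{O}_X$, is perfectly compatible with $T_X$ being pseudo-effective: complex tori have plenty of them. ``Flatness'' of the induced quotient only yields a numerically trivial quotient, which is not contradictory. What pseudo-effectivity of $T_X$ really excludes is a rank-one subsheaf $L\subseteq\Omega_X^{[p]}$ whose first Chern class is pseudo-effective but not numerically trivial (e.g.\ one with positive Iitaka dimension), because $L^\vee$ would then be a quotient of the pseudo-effective sheaf $T_X^{[p]}$. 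You never extract such an $L$ from the representation. The asserted ``bigness'' of the spectral forms, the negatively curved quotient from a period or harmonic map, and the rigid/non-rigid reduction remain a research programme, not a proof.

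\textbf{The solvable step.} This has problems too. A non-abelian solvable image need not involve non-unitary characters: unipotent Heisenberg-type representations with trivial diagonal characters already exist for curves of genus $2$, and their extension classes live in untwisted $H^1$. Your passage from the Albanese map and Theorem \ref{t:psef-strc} (a much heavier input, proved only later in the paper) to ``$\pi_1$ is virtually an extension of a free abelian group by fibre groups'' needs an exact sequence you do not justify. In particular you must rule out multiple fibres over divisors of $T$; otherwise the quotient is an orbifold fundamental group, which can be non-abelian (an elliptic curve with one orbifold point). If that step did work, it would prove the statement for all $\rho$ at once, making your Tits-alternative split and your first step superfluous.

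\textbf{The missing idea.} What you are missing is Campana's specialness, which reduces everything to the $p$-form version of your orbifold-curve argument. Let $\sigma\colon\widetilde X\to X$ be a resolution. Suppose $\widetilde X$ were not special, so there is a Bogomolov sheaf $L\subseteq\Omega^p_{\widetilde X}$ with $\kappa(\widetilde X,L)=p>0$.
\begin{enumerate}
\item The reflexive pushforward $\sigma_{[*]}L\subseteq\Omega^{[p]}_X$ is pseudo-effective.
\item Dualizing gives a generically surjective map $T_X^{[p]}\to(\sigma_{[*]}L)^\vee$ from a pseudo-effective sheaf, so $(\sigma_{[*]}L)^\vee$ is pseudo-effective as well.
\item Hence $\sigma_{[*]}L\equiv0$, and $L$ is numerically equivalent to a $\sigma$-exceptional divisor, contradicting $\kappa(L)>0$.
\end{enumerate}
So $\widetilde X$ is special. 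Then \cite[Theorem 7.8]{Cam04} says every linear representation of the fundamental group of a special manifold has virtually abelian image. Its proof is where Zuo's theorem enters: a Zariski-dense representation into an almost simple group has a Shafarevich image of general type, which produces a Bogomolov sheaf. The solvable case is also handled there. Combined with $\pi_1(X)\cong\pi_1(\widetilde X)$ from \cite[Theorem 1.1]{Tak03}, this finishes the proof.

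Your orbifold-curve branch is exactly the case $p=1$ of this argument, once it is phrased on $\widetilde X$ via the saturation of $g^*K_C$ in $\Omega^1_{\widetilde X}$ and pushed forward reflexively. The object to extract in general is a Bogomolov sheaf in $\Omega^p$, not symmetric differentials.
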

\begin{proof}
First, we recall that a compact K\"ahler variety is called \textit{special} in the sense of Campana if some resolution 
\(\sigma\colon\widetilde{X}\to X\) satisfies the following condition: the Iitaka dimension satisfies 
\(\kappa(\widetilde{X},L)<p\) for every \(p>0\) and every line bundle
\(L\subseteq \wedge^p\Omega_{\widetilde{X}}\) 
(see \cite[Definitions 2.1, 2.24 and Theorem 2.27]{Cam04}). 
The proposition then follows from \cite[Theorem 7.8]{Cam04} and \cite[Theorem 1.1]{Tak03} once we show that 
\(\widetilde{X}\) is special under the assumption that \(T_X\) is pseudo-effective.

Suppose, to the contrary, that \(\widetilde{X}\) is not special.
Then there exist an integer \(p>0\) and a line bundle \(L\) 
such that 
\(L\subseteq\wedge^p\Omega_{\widetilde{X}}\) and 
\(\kappa(\widetilde{X},L)\geq p>0\).
Then we have the natural inclusion
\[
\sigma_*L\subseteq \sigma_{[*]}L :=(\sigma_*L)^{\vee\vee}\subseteq\Omega_X^{[p]},
\]
where \(\Omega_X^{[p]}\) denotes the reflexive hull of \(\Omega_X^{p}\). 
Since \(\kappa(\widetilde{X},L)>0\), the line bundle \(L\), and hence the reflexive rank-one sheaf $\sigma_{[*]}L$, 
is pseudo-effective.
On the other hand, by dualizing the inclusion \(\sigma_{[*]}L\subseteq\Omega_X^{[p]}\), we obtain a generically surjective morphism
\[
T_X^{[p]}\to (\sigma_{[*]}L)^\vee.
\]
Since \(T_X^{[p]}\) is pseudo-effective by \cite[Proposition 2.5]{CDM26}, 
it follows that \((\sigma_{[*]}L)^\vee\) is pseudo-effective as well.
Thus \(\sigma_{[*]}L\) is numerically trivial. 
Consequently, \(L\) is numerically equivalent to a \(\sigma\)-exceptional  divisor. 
This contradicts the fact that \(\kappa(\widetilde{X},L)>0\).
Therefore, \(\widetilde{X}\) is special, and the proposition is proved.
\end{proof}

The following proposition extends \cite[Theorem 2.2]{MQ25} from compact K\"ahler manifolds to compact klt K\"ahler varieties, and also extends \cite[Lemma 6.1]{IMZ23} from projective varieties to K\"ahler varieties.
We remark that we cannot simply reduce the proof to a functorial resolution, 
whose tangent bundle need no longer be pseudo-effective (see Example \ref{ex-mqe}).
\begin{proposition}\label{prop-irreg}
Let \(X\) be a compact klt K\"ahler variety with pseudo-effective tangent sheaf \(T_X\). 
Then the augmented irregularity of \(X\) satisfies \(\widehat{q}(X)\leq\dim X\). 
In particular, there exists a finite quasi-\'etale cover \(X'\to X\) such that \(q(X')=\widehat{q}(X)\).
Moreover, if \(\widehat{q}(X)=0\), then \(X\) is projective and rationally connected.
\end{proposition}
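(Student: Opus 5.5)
Fix a finite quasi-\'etale cover \(\gamma\colon X'\to X\) with \(X'\) klt and compact K\"ahler. The reflexive pullback \(T_{X'}=(\gamma^*T_X)^{\vee\vee}\) is again pseudo-effective, as in the proof of Proposition \ref{p:Q-torus}. The whole argument therefore reduces to a uniform bound on \(q(X')\) for any klt variety with pseudo-effective tangent sheaf. To get it, I would apply Proposition \ref{prop-virtually} together with Hodge theory on a resolution. By \cite{Tak03}, \(\pi_1\) of a resolution \(\widetilde X'\) equals \(\pi_1(X')\), since klt spaces have rationally chordal/connected fibres in a resolution. Hence \(q(X')=q(\widetilde X')=\tfrac12 b_1(\widetilde X')\).

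\textbf{Bound \(q(X')\le \dim X\).} By specialness of \(\widetilde X'\) (established in the proof of Proposition \ref{prop-virtually}), the Albanese map \(\widetilde X'\to \mathrm{Alb}(\widetilde X')\) is surjective with connected fibres, by Campana's results for special manifolds \cite{Cam04}. Therefore \(q(X')=\dim\mathrm{Alb}\le\dim X'=\dim X\). The bound is uniform in \(X'\), so the supremum defining \(\widehat q(X)\) is attained by some \(X'\), which gives the second claim.

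\textbf{Case \(\widehat q(X)=0\).} First, \(H^2(X,\mathcal{O}_X)=0\): otherwise a nonzero holomorphic \(2\)-form \(\sigma\) on \(\widetilde X\) would give a generically injective map \(\mathcal{O}\to\Omega^{[2]}_X\), hence a generically surjective morphism \(T^{[2]}_X\to\mathcal O\). I would then pass to a quasi-\'etale cover (Theorem \ref{t:CDM}-type argument) to show that \(\sigma\) is parallel and forces a flat factor, producing irregularity after a cover and contradicting \(\widehat q=0\). Granting this, Kodaira's criterion for rational singularities gives projectivity.

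Next, to prove rational connectedness I would take a maximally quasi-\'etale cover \(\widehat X\), projective with \(T_{\widehat X}\) pseudo-effective, and consider the MRC fibration \(\widehat X\dashrightarrow Z\). By \cite[Lemma 6.1]{IMZ23}-type arguments, the pseudo-effectivity of \(T\) makes \(K_Z\) pseudo-effective and forces \(Z\) to have \(c_1=0\) behaviour. Its Beauville--Bogomolov-type decomposition then yields either positive augmented irregularity or holomorphic forms, contradicting the vanishing. Hence \(Z\) is a point and \(X\) is rationally connected.

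\textbf{Main obstacle.} I expect the main difficulty to be the holomorphic \(2\)-form step: showing \(h^{2,0}=0\) without first knowing projectivity. Here I would first try to exploit that a section of \(\Omega^{[p]}\) pairs with the pseudo-effective \(T^{[p]}_X\) to give a psh-bounded, hence parallel, object, and then apply Theorem \ref{t:CDM} to the saturated subsheaf it generates.
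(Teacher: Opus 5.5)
Your bound \(\widehat q(X)\le\dim X\) is correct but argued differently from the paper. You use specialness of a resolution of each quasi-\'etale cover (from the proof of Proposition \ref{prop-virtually}) together with Campana's theorem that special manifolds have surjective Albanese maps. The paper instead uses that nonzero reflexive \(1\)-forms do not vanish on \(X_{\reg}\) (\cite[Lemma 2.9 (1)]{IMZ23}), so the Albanese map is a submersion on \(X_{\reg}\). That stronger output is reused in Theorem \ref{t:qe-mrc} (3); your route instead recycles an argument already in the paper.

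The case \(\widehat q(X)=0\), however, has a genuine gap.

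First, your step \(H^2(X,\mathcal O_X)=0\) has no working mechanism. A nonzero section of \(\Omega_X^{[2]}\) only gives a generically surjective map \(T_X^{[2]}\to\mathcal O_X\), which is perfectly consistent with pseudo-effectivity. There is no Bochner-type parallelism available for a merely pseudo-effective \(T_X\). Theorem \ref{t:CDM} applied to the subsheaf generated by the form, which is just \(\mathcal O_X\), yields nothing. The step is also unnecessary: the MRC fibration exists as a meromorphic map on compact K\"ahler manifolds, and \cite{Ou25} gives uniruledness and pseudo-effectivity of the base's canonical class without projectivity. Projectivity then follows at the end from rational connectedness.

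Second, the MRC step fails as stated, because pseudo-effectivity of \(T_X\) controls a sheaf on \(X\), not on \(Z\). For a resolution \(\sigma\colon\widetilde X\to X\) of the MRC map with \(\widetilde\pi\colon\widetilde X\to Z\), consider \(\mathcal F\coloneqq(\sigma_*\widetilde\pi^*\Omega_Z)^{\vee\vee}\subset\Omega_X^{[1]}\). Its dual is pseudo-effective as a quotient of \(T_X\), and \(\det\mathcal F\) is pseudo-effective from \(K_Z\), so \(c_1(\mathcal F)=0\). This does not produce a klt model of \(Z\) with \(K_Z\equiv0\) to which a Beauville--Bogomolov decomposition applies. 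Even if it did, Calabi--Yau and irreducible symplectic factors have vanishing augmented irregularity and carry forms only in degree at least \(2\). A Calabi--Yau factor of dimension at least \(3\) even has \(h^{2,0}=0\). So neither \(\widehat q=0\) nor your \(h^{2,0}=0\) excludes them.

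The missing idea is to stay on \(X\) and use only \(1\)-forms, which is exactly what \(\widehat q=0\) controls. Pass to a maximally quasi-\'etale cover with \(q=0\). Then \(K_X\) is not pseudo-effective: otherwise \(K_X\equiv0\), Theorem \ref{t:CDM} makes \(T_X\) numerically flat, and \(X\) is a torus quotient with \(\widehat q>0\). Hence \(X\) is uniruled by \cite{Ou25}. If \(\dim Z>0\), Theorem \ref{t:CDM} makes \(\mathcal F\) locally free and numerically flat, hence given by a representation of \(\pi_1(X)\). Its image is virtually abelian by Proposition \ref{prop-virtually}. Since \(H_1\) of a resolution is torsion, the representation becomes trivial after a finite \'etale cover. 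There \(\mathcal F\cong\mathcal O^{\oplus\dim Z}\subset\Omega^{[1]}\), contradicting \(q=0\).
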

\begin{proof}
Since the tangent sheaf \(T_X\) is pseudo-effective, we can apply \cite[Lemma 2.9 (1)]{IMZ23} to conclude that any non-zero section of \(H^0(X,\Omega_X^{[1]})\) is nonvanishing on the smooth locus \(X_{\textup{reg}}\), noting that the proof of loc.~cit. does not use the projectivity assumption. 
Thus, by the proof of \cite[Proposition 3.9]{DPS94}, the restriction of the Albanese morphism 
to the regular locus
\[
\textup{alb}_X|_{X_{\textup{reg}}}\colon X_{\textup{reg}}\to\textup{Alb}(X)
\]
is a submersion.
Hence, the Albanese morphism is surjective.
Applying the same argument to all finite quasi-\'etale covers of \(X\), and using the fact that the pseudo-effectivity of the tangent sheaf is preserved under reflexive finite pullback, we obtain \(\widehat{q}(X)\leq\dim X\). 
Since \(\widehat{q}(X)\leq\dim X<\infty\), we can take a quasi-\'etale cover \(X'\to X\) with \(q(X')=\widehat{q}(X)\).
This proves the first assertion.

We now prove the rational connectedness of \(X\) when \(\widehat{q}(X)=0\). 
After replacing \(X\) with a quasi-\'etale cover, we may assume that \(X\) is maximally quasi-\'etale and satisfies \(q(X)=\widehat{q}(X)=0\).
By Proposition \ref{p:Q-torus}, the canonical divisor \(K_X\) is not pseudo-effective.

By \cite[Theorem 1.1]{Ou25}, \(X\) is covered by rational curves.
Let \(\pi\colon X\dashrightarrow Y\) be a dominant meromorphic MRC fibration.
We shall prove that \(Y\) is a point.
Suppose, to the contrary, that \(\dim Y>0\).
By taking a higher model, we may assume that \(Y\) is smooth.
Let \(\widetilde{X}\) be the resolution of the graph of \(\pi\), giving the following commutative diagram:
\[
\xymatrix{
&\widetilde{X}\ar[dr]^{\widetilde{\pi}}\ar[dl]_{\sigma}&\\
X\ar@{-->}[rr]_\pi&&Y
}
\]
Note also that \(Y\) is not uniruled, and thus the canonical divisor \(K_Y\) is pseudo-effective by \cite[Theorem 1.1]{Ou25}.

We have the natural injection 
\[
0\to \widetilde{\pi}^*\Omega_Y\to\Omega_{\widetilde{X}}.
\]
Pushing it forward by \(\sigma\) and taking the double dual, we obtain an injection
\[
0\to \mathcal{F}\coloneqq (\sigma_*\widetilde{\pi}^*\Omega_Y)^{\vee\vee}\to\Omega_{X}^{[1]}.
\] 
On the one hand, since \(T_X\) is pseudo-effective, it follows that \(\mathcal{F}^\vee\) is pseudo-effective.
On the other hand, \(\mathcal{F}\) coincides with the \(\pi\)-pullback of \(\Omega_Y\) over the locus \(X_0\) where \(\pi\) is a morphism.
Since the complement of \(X_0\) has codimension at least \(2\) in \(X\), it follows that its determinant
\[
c_1(\mathcal{F})=c_1(\sigma_*\widetilde{\pi}^*K_Y)
\]
is also pseudo-effective.
Hence, \(\det\mathcal{F}\) and \(\det\mathcal{F}^\vee\) are both pseudo-effective, and therefore \(c_1(\mathcal{F})=0\).
By Theorem \ref{t:CDM}, the sheaf \(\mathcal{F}\) is locally free and numerically flat; in particular, it is induced by a linear representation
\[
\pi_1(X)\to\textup{GL}(r,\mathbb{C}),
\]
where \(r\) is the rank of \(\mathcal{F}\).

The image of this representation is virtually abelian by Proposition \ref{prop-virtually}. 
Thus, by the universality of the abelianization of the fundamental group, up to a finite \'etale cover, the representation factors through
\[
\pi_1(\widetilde{X})\to H_1(\widetilde{X},\mathbb{Z})\to \textup{GL}(r,\mathbb{C}),
\]
where we use the fact that \(X\) is klt and hence \(\pi_1(X)\cong\pi_1(\widetilde{X})\) (see \cite[Theorem 1.1]{Tak03}).
Moreover, we have 
\[
h^1(\widetilde{X},\mathbb{C})=2h^1(\widetilde{X},\mathcal{O}_{\widetilde X})=2h^1(X,\mathcal{O}_X)=0.
\]
By Hodge theory and the universal coefficient theorem, the cohomology \(H_1(\widetilde{X},\mathbb{Z})\) is a torsion group.
Therefore, after further replacing \(X\) by a finite \'etale cover and taking the base change, we may assume that the representation 
\(\pi_1(X)\to\textup{GL}(r,\mathbb{C})\) is trivial.  
Equivalently, \(\mathcal{F}\cong\mathcal{O}_X^{\oplus r}\).
This yields a contradiction, since we have 
\[
0=h^1(X,\mathcal{O}_X)=h^1(\widetilde{X},\mathcal{O}_{\widetilde{X}})=h^0(\widetilde{X},\Omega_{\widetilde{X}}^1)=h^0(X,\sigma_*\Omega_{\widetilde{X}}^1)=h^0(X,\Omega_X^{[1]})\geq r>0.
\] 
Here, the last equality follows from the reflexivity of \(\sigma_*\Omega_{\widetilde{X}}^1\); see, for example, \cite[Theorem 1.4]{KS21}.
This contradiction shows that \(Y\) is a point.
Thus, \(X\) is rationally connected, and thus projective.
This completes the proof of the proposition.
\end{proof}

With these preparations in place, we now prove Theorem \ref{t:psef-rc}.
\begin{proof}[Proof of Theorem \ref{t:psef-rc}]
\((1)\Rightarrow (2)\). 
Let \(X'\to X\) be a maximally quasi-\'etale Galois cover such that \(X'\) is also klt and rationally connected. 
Note that \(X\) and $X'$  are projective.
By \cite[Proposition 3.7]{IMZ23}, the tangent sheaf \(T_X\) is generically ample if and only if \(T_{X'}\) is generically ample. 
Furthermore, by \cite[Proposition 2.6]{Mat23}, the tangent sheaf \(T_{X'}\) is pseudo-effective. 
It then follows from \cite[Theorem 3.5]{IMZ23} that there exists an exact sequence, the so-called \textit{Fujita decomposition},
\[
0\to\mathcal{E}\to T_{X'}\to\mathcal{Q}\to 0
\]
such that \(\mathcal{E}\) is a pseudo-effective generically ample reflexive sheaf and \(\mathcal{Q}^{\vee\vee}\) is a numerically flat locally free sheaf (in the sense that both \(\mathcal{Q}^\vee\) and \(\mathcal{Q}^{\vee\vee}\) are nef). 
Suppose that \(T_{X'}\) is not generically ample, which is equivalent to saying that \(\mathcal{Q}\) is non-trivial. 
Since \(X'\) is klt and rationally connected, and hence simply connected, we obtain that \(\mathcal{Q}^\vee\) is a direct sum of trivial bundles of positive rank. 
Therefore, the dual injection \(0\to\mathcal{Q}^\vee\to\Omega_{X'}^{[1]}\) implies that \(H^0(X',\Omega_{X'}^{[1]})\neq 0\).
For any resolution \(\sigma\colon\widetilde{X}\to X'\), we have \(\Omega_{X'}^{[1]}=\sigma_*\Omega_{\widetilde{X}}^1\); hence \(H^0(\widetilde{X},\Omega_{\widetilde{X}}^1)\neq 0\), which contradicts the rational connectedness of \(X'\).

\((2)\Rightarrow (3)\). 
Assume that \(X\) is projective and \(T_X\) is generically ample. 
Since \(T_X\) is pseudo-effective, by Proposition \ref{prop-irreg}, there exists 
a maximally quasi-\'etale cover \(X'\to X\) such that the Albanese morphism \(\pi\colon X'\to A\coloneqq\textup{Alb}(X')\) is surjective and \(q(X')=\widehat{q}(X)\).
Applying \cite[Proposition 3.7]{IMZ23} and \cite[Proposition 2.6]{Mat23}, we obtain that \(T_{X'}\) is also pseudo-effective and generically ample. 
Suppose, to the contrary, that \(q(X')>0\).
Then \(\pi\) is non-trivial and \(\dim A>0\). 
Dualizing the injection \(0\to \pi^*\Omega_A\to \Omega_{X'}\), we obtain a natural generically surjective morphism
\(T_{X'}\to \pi^{*}T_A\). 
Since \(T_{X'}\) is generically ample, it follows that
\(\pi^*T_A=\mathcal{O}_{X'}^{\oplus\dim A}\) is generically ample as well, which is absurd.

\((3)\Rightarrow (1)\). 
This follows immediately from Proposition \ref{prop-irreg}, applied also to finite quasi-\'etale covers of \(X\).

Finally, assume further that \(X\) is maximally quasi-\'etale.
Since \(X\) is klt and rationally connected under condition \((1)\), it is simply connected; hence \((1)\Rightarrow (4)\) holds.
Conversely, if \(X\) is simply connected, then it has no non-trivial \'etale cover and hence, by the maximally quasi-\'etale assumption, no non-trivial quasi-\'etale cover. 
By taking a resolution and applying \cite[Theorem 1.1]{Tak03}, we obtain 
\(q(X)=q(\textup{Alb}(X))=0\) (see, e.g., \cite[Theorem 5.2]{Kol93}), which implies (3).
\end{proof}

\section{Equivariant maximally quasi-\'etale covers, Proof of Theorem \ref{thm-lift}}\label{s:rmk}
In this section, we  prove Theorem 1.4 and then slightly extend \cite{Yos21} by taking an equivariant \(\mathbb{Q}\)-factorial model and an equivariant maximally quasi-\'etale cover.  
We also give more concrete descriptions of the fundamental group of the regular locus.
See Subsection \ref{sec-qe} for the notation and terminology used here.
As a consequence, we also prove Corollary \ref{c:simply}.

We begin with Theorem \ref{thm-lift}, which guarantees the existence of lifts of surjective endomorphisms to suitable maximally quasi-\'etale covers. As discussed in Section \ref{s:intro}, this lifting problem is subtle in general, especially without passing to iterates.

\begin{proof}[Proof of Theorem \ref{thm-lift}]
First, note that \(f\) is finite, since \(f\) is surjective (see \cite[Lemma 2.10]{Zho21-Asian}). 
Take a maximally quasi-\'etale cover \(p\colon \widehat X\to X\). 
We shall construct a finite \'etale cover \(\tau\colon Y \to \widehat X\) such that
\(f\) lifts to the composite cover \(q\coloneqq p\circ \tau\colon Y\to X\).

For this purpose, consider the restriction
\[
f \colon U:=X_{\reg}\setminus f^{-1}(X_{\sing}) \to X_{\reg},
\]
noting that  \(f(U)\subset X_{\reg}\). 
Fix a point \(x_{0} \in U\), and set
\(x_{1}:=f(x_{0}) \in X_{\reg}\). 
Since \(f\) is finite, we have
\(\codim(X_{\reg}\setminus U)\geq 2\). 
Thus the homomorphism
\[
j_*\colon \pi_1(U, x_0) \to \pi_1(X_{\reg}, x_0)
\]
induced by the inclusion \(j \colon U \hookrightarrow X_{\reg}\) is an
isomorphism.

Choose a path \(\ell\) in \(X_{\reg}\) connecting \(x_{0}\) and \(x_{1}\),
and let
\[
\alpha_\ell\colon
\pi_1(X_{\reg},x_1)\xrightarrow{\cong}
\pi_1(X_{\reg},x_0)
\]
be the change-of-base-point isomorphism defined by
\(\alpha_\ell([\gamma])=[\ell *\gamma *\ell^{-1}]\). 
Then the map \(f\colon U\to X_{\reg}\) induces a group homomorphism
\[
\varphi\coloneqq
\alpha_\ell\circ f_*\circ j_*^{-1}
\colon
G\coloneqq \pi_1(X_{\reg},x_0) \to G.
\]
Choose a point
\(\widehat x_0\in p^{-1}(x_{0}) \subset \widehat X^\circ:=p^{-1}(X_{\reg})\).

Since \(p\) is quasi-\'etale, the restriction
\(p\colon \widehat X^\circ\to X_{\reg}\) is a finite \'etale cover. 
Let
\[
H\coloneqq
p_*\pi_1(\widehat X^\circ,\widehat x_0)
\subset G
\]
be the  finite-index subgroup corresponding to $p\colon \widehat X^\circ\to X_{\reg}$. 
Consider the core \(H_0\) of
\(H\), defined by
\[
H_0\coloneqq
\bigcap_{\gamma\in G}\gamma H\gamma^{-1}.
\]
Then \(H_0\) is a finite-index normal subgroup of \(G\), and \(H_0\subset H\).
Put \(m\coloneqq [G:H_0]\).
Since \(G=\pi_1(X_{\reg},x_0)\) is finitely generated, it has only finitely
many subgroups of index at most \(m\). 
Moreover, for every \(i\geq 0\), the
subgroup \(\varphi^{-i}(H_0)\) has index at most \(m\). 
Hence the family
$
\{\varphi^{-i}(H_0)\}_{i\geq 0}
$
is finite. 
Therefore the subgroup
\[
K\coloneqq \bigcap_{i\geq 0}\varphi^{-i}(H_0)
\]
is of finite index in \(G\). 
Moreover, we see that \(K\subset H_0\subset H\), and
\(\varphi(K)\subset K\) by construction.

Consider the finite \'etale cover
\[
q^\circ \colon (Y^\circ, y_0)\to (X_{\reg}, x_0)
\]
corresponding to \(K\), namely
\[
(q^\circ)_*\pi_1(Y^\circ,y_0)=K.
\]
Since \(K\subset H\), this cover factors through
\[
q^\circ=p\circ \tau^\circ \colon
Y^\circ \xrightarrow{\tau^\circ} \widehat X^\circ
\xrightarrow{p} X_{\reg}.
\]
Since
$
\codim_{\widehat X_{\reg}}
(\widehat X_{\reg}\setminus \widehat X^\circ)\geq 2,
$
the finite \'etale cover \(\tau^\circ\colon Y^\circ\to \widehat X^\circ\)
corresponds to a finite \'etale cover of \(\widehat X_{\reg}\). 
Since
\(\widehat X\) is maximally quasi-\'etale, this finite \'etale cover extends
to a finite \'etale cover \(\tau\colon Y\to \widehat X\). 
We use the notation
\(q\coloneqq p\circ\tau\colon Y\to X\). 
Then \(Y^\circ=q^{-1}(X_{\reg})\).

We now prove that \(f\) lifts to \(Y\). 
We regard the base point
\(y_0\in Y^\circ\) as a point of \(Y\). 
Let
$
\widetilde \ell
$
be the lift of \(\ell\) to $Y^\circ$ with initial point \(y_0\), and set
\(y_1\coloneqq \widetilde \ell(1)\). 
Then \(q(y_1)=x_1=f(x_0)\).
Let
\[
\beta_{\widetilde \ell}\colon
\pi_1(Y^\circ,y_1)\xrightarrow{\cong}
\pi_1(Y^\circ,y_0)
\]
be the change-of-base-point isomorphism induced by \(\widetilde \ell\).
Then the diagram
\[
\xymatrix{
\pi_1(Y^\circ,y_1) \ar[r]^-{\beta_{\widetilde \ell}} \ar[d]_{q_*}
&
\pi_1(Y^\circ,y_0) \ar[d]^{q_*}
\\
\pi_1(X_{\reg},x_1) \ar[r]^-{\alpha_\ell}
&
\pi_1(X_{\reg},x_0)
}
\]
commutes. 
Since \(q_*\pi_1(Y^\circ,y_0)=K\), it follows that
\[
q_*\pi_1(Y^\circ,y_1)=\alpha_\ell^{-1}(K).
\]
Thus, with respect to the base point \(y_1\), the cover
\(q^\circ\colon Y^\circ\to X_{\reg}\) corresponds to the subgroup
\(\alpha_\ell^{-1}(K)\subset \pi_1(X_{\reg},x_1)\).

Since \(\codim_{Y^\circ}(Y^\circ \setminus q^{-1}(U))\geq 2\), the inclusion
\(q^{-1}(U)\hookrightarrow Y^\circ\) induces an isomorphism
\[
\pi_1(q^{-1}(U),y_0)\xrightarrow{\cong}\pi_1(Y^\circ,y_0).
\]
Under this isomorphism, we have
$
j_*\bigl(q_*\pi_1(q^{-1}(U),y_0)\bigr)=K.
$
Hence, we have 
\[
\alpha_\ell\circ f_*
\bigl(q_*\pi_1(q^{-1}(U),y_0)\bigr)
=
\alpha_\ell\circ f_*\circ j_*^{-1}(K)
=
\varphi(K)
\subset K,
\]
equivalently,
\[
(f\circ q)_*\pi_1(q^{-1}(U),y_0)
\subset
\alpha_\ell^{-1}(K).
\]
Therefore, by the lifting criterion applied to the map
\[
f\circ q\colon (q^{-1}(U), y_0) \to (X_{\reg}, x_1)
\]
and to the finite \'etale cover
\[
q^\circ\colon (Y^\circ, y_1) \to (X_{\reg}, x_1),
\]
we obtain a lift
\[
F^\circ\colon (q^{-1}(U), y_0)\to (Y^\circ, y_1)
\]
such that \(q\circ F^\circ=f\circ q\) on \(q^{-1}(U)\).

It remains to extend \(F^\circ\) to an endomorphism of \(Y\). 
Consider the
fibre product \(P\coloneqq Y\times_X Y\), where the first morphism
\(Y\to X\) is \(f\circ q\), and the second one is \(q\). 
Thus we have the
Cartesian diagram
\[
\xymatrix{
P \ar[r]^-{\pr_2} \ar[d]_{\pr_1} & Y \ar[d]^q \\
Y \ar[r]_-{f\circ q} & X .
}
\]
The morphism \(\pr_1\colon P\to Y\) is finite, since it is obtained by base
change of the finite morphism \(q\colon Y\to X\).

On the open subset \(q^{-1}(U)\subset Y\), the map \(F^\circ\) satisfies
\(q\circ F^\circ=f\circ q\). 
Hence \(F^\circ\) defines a section
\(s^\circ\) of \(\pr_1\) over \(q^{-1}(U)\), namely
\(s^\circ(y)\coloneqq (y,F^\circ(y))\). 
Let \(\Gamma\subset P\) be the
closure of \(s^\circ(q^{-1}(U))\). 
Then the restriction
\(\pr_1|_\Gamma\colon \Gamma\to Y\) is finite and bimeromorphic. 
Indeed, it
is finite because \(\pr_1\) is finite, and it is an isomorphism over the
dense open subset \(q^{-1}(U)\).

Since \(Y\) is normal, it follows that
\(\pr_1|_\Gamma\colon \Gamma\to Y\) is an isomorphism. 
Therefore
\[
F\coloneqq \pr_2|_\Gamma\circ (\pr_1|_\Gamma)^{-1}\colon Y\to Y
\]
is an extension of \(F^\circ\). 
Since \(\Gamma\subset P\), we have
\(q\circ F=f\circ q\). 
Thus \(f\) lifts to the finite cover \(q\colon Y\to X\).

Finally, since \(\tau\colon Y\to \widehat X\) is finite \'etale and
\(\widehat X\to X\) is maximally quasi-\'etale, the composite
\(q\colon Y\to X\) is again a maximally quasi-\'etale cover. 
Replacing
\(\widehat X\) by \(Y\), we obtain the desired cover.
\end{proof}

As an application of Theorem \ref{thm-lift}, together with \cite[Theorem 1.3]{Yos21}, we obtain Theorem \ref{t:qe-mrc} below as a remark on Yoshikawa's cover.

\begin{theorem}\label{t:qe-mrc}
Let \(X\) be a projective klt variety admitting an int-amplified endomorphism \(f\). 
Then, after replacing \(f\) by an iterate, there exists an \(f\)-equivariant maximally quasi-\'etale cover \(\widehat{X}\to X\) such that the following statements hold.
\begin{enumerate}
\item The irregularity attains its maximum: \(q(\widehat{X})=\widehat{q}(X)\).
\item The Albanese morphism \(\pi\colon\widehat{X}\to A\) is a fibration of Fano type with \(\dim A=\widehat{q}(X)\). 
\item A general fibre \(\widehat{F}\) is also maximally quasi-\'etale and the fundamental groups of the regular loci satisfy \(\pi_1(\widehat{X}_\textup{reg})\cong \pi_1(A)\), and \(\pi_1(\widehat{F}_{\textup{reg}})=\{1\}\). 
\end{enumerate} 
\end{theorem}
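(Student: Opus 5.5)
We start from Yoshikawa's theorem \cite[Theorem 1.3]{Yos21}. After replacing \(f\) by an iterate, it provides an \(f\)-equivariant quasi-\'etale cover \(X_1\to X\) with three properties: \(q(X_1)=\widehat{q}(X)\); the Albanese morphism \(X_1\to \textup{Alb}(X_1)\) is a fibration of Fano type; and the lifted endomorphism \(f_1\) is again int-amplified, since int-amplifiedness passes to finite covers through pullback of K\"ahler classes. We then apply Theorem \ref{thm-lift} to \(f_1\colon X_1\to X_1\). This gives a maximally quasi-\'etale cover \(\widehat X\to X_1\) to which \(f_1\) lifts, and the composite \(\widehat X\to X\) is an \(f\)-equivariant maximally quasi-\'etale cover.

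Assertion (1) follows from the chain \(q(X_1)\le q(\widehat X)\le \widehat q(X)=q(X_1)\). For (2), the Albanese map of \(\widehat X\) dominates that of \(X_1\), and both targets have dimension \(\widehat q(X)\), so \(\textup{Alb}(\widehat X)\to \textup{Alb}(X_1)\) is an isogeny. Because \(\widehat X\to X_1\) is quasi-\'etale and is compatible with the Albanese maps, the Albanese fibration \(\pi\colon\widehat X\to A\) is the base change of the Albanese fibration of \(X_1\) followed by normalization and Stein factorization. Being of Fano type over the base is preserved under such quasi-\'etale base change: we pull back the boundary, and the log discrepancies are preserved by quasi-\'etaleness, so \(-(K+\Delta)\) stays relatively big and nef. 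Alternatively, we can rerun Yoshikawa's argument directly on \(\widehat X\) with the lifted int-amplified endomorphism, which yields (2) immediately.

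For (3), we first use that a general fibre \(\widehat F\) of a Fano type fibration is klt and rationally connected, so \(\pi_1(\widehat F)=1\). The real content is \(\pi_1(\widehat F_{\reg})=1\), or at least that its profinite completion is trivial. The plan is to argue by contradiction. Suppose \(\widehat F\) has a nontrivial quasi-\'etale cover. By the finiteness of \(\widehat\pi_1\) of regular loci of Fano type varieties (Braun), \(\pi_1(\widehat F_{\reg})\) is finite, so the universal quasi-\'etale cover exists fibrewise. We spread it out over a dense open subset of \(A\) and take the normalization of \(\widehat X\) in it. This produces a quasi-\'etale cover of \(\widehat X\), at least after checking purity in codimension one over the complement of the open set; that check uses that the branch divisor cannot be vertical, since \(\pi\) has irreducible general fibres and equidimensionality holds in codimension one. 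Maximal quasi-\'etaleness of \(\widehat X\) makes this cover \'etale. An \'etale cover restricts to \'etale covers of fibres, and \(\pi_1(\widehat F)=1\), so the cover must be trivial on \(\widehat F\). This contradicts its nontriviality, hence \(\widehat F\) is maximally quasi-\'etale.

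Next we use the homotopy sequence for \(\widehat X_{\reg}\to A\) restricted over an open set where \(\pi\) is smooth on regular loci, together with the fact that removing codimension-two subsets does not change \(\pi_1\). Combined with \(\pi_1(\widehat F_{\reg})=1\), this gives \(\pi_1(\widehat X_{\reg})\to\pi_1(A)\) injective. Surjectivity holds because the fibres are connected, and because multiple fibres in codimension one are excluded by the Fano type and equivariance structure. We expect this last step to be the main obstacle: handling multiple fibres and the discriminant so that the fibration-exact sequence descends to \(\pi_1(A)\). For it we would first try the int-amplified dynamics on \(A\): the discriminant divisor on \(A\) is totally invariant under an int-amplified endomorphism of the torus, and such a divisor must be empty.
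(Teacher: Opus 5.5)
\paragraph{Main gap: part (3).} Your fibre-first contradiction argument rests on two steps that do not hold as stated.

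First, a nontrivial quasi-\'etale cover of $\widehat F$ spreads out over an open $U\subset A$ only if two conditions hold. The image of $\pi_1(\widehat F_{\textup{reg}})$ in $\pi_1(\pi^{-1}(U)\cap\widehat X_{\textup{reg}})$ must be nontrivial, and it must be detected by a finite quotient of that group. The homotopy of a Zariski open $U\subset A$ is uncontrolled, so neither condition is automatic.

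Second, even granting such a cover, the normalization of $\widehat X$ in it can ramify along vertical divisors lying over $A\setminus U$. Irreducibility of general fibres gives no control there, so maximal quasi-\'etaleness of $\widehat X$ cannot be invoked.

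Your last paragraph correctly locates the difficulty in the discriminant and multiple fibres, but it leaves the difficulty open. You have also swapped the roles of injectivity and surjectivity. Surjectivity of $\pi_1(\widehat X_{\textup{reg}})\to\pi_1(A)$ is automatic from connectedness of fibres. Multiple fibres obstruct injectivity, and $\pi_1(\widehat F_{\textup{reg}})=\{1\}$ alone does not give it.

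\paragraph{How the paper closes the gap.} The paper supplies exactly the two missing ingredients and runs the argument in the opposite direction, total space first.
\begin{itemize}
\item By Theorem \ref{t:pos}, $T_{\widehat X}$ is pseudo-effective. Hence nonzero reflexive $1$-forms are nowhere vanishing on $\widehat X_{\textup{reg}}$, and, as in Proposition \ref{prop-irreg}, $\pi|_{\widehat X_{\textup{reg}}}$ is a submersion over all of $A$. This yields $1\to\pi_1(\widehat F_{\textup{reg}})\to\pi_1(\widehat X_{\textup{reg}})\to\pi_1(A)\to1$ with no discriminant at all.
\item $\pi_1(\widehat F_{\textup{reg}})$ is finite by \cite{Bra21}, and $\pi_1(A)\cong\mathbb{Z}^{2\dim A}$ is good and residually finite. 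So $\pi_1(\widehat X_{\textup{reg}})$ is residually finite by \cite[Proposition 6.1]{GJZZ08}.
\item Maximal quasi-\'etaleness of $\widehat X$ and $\pi_1(\widehat X)\cong\pi_1(A)$ then give an injection $\pi_1(\widehat X_{\textup{reg}})\hookrightarrow\widehat\pi_1(\widehat X_{\textup{reg}})\cong\widehat\pi_1(A)$ that factors through $\pi_1(A)$.
\item Hence $\pi_1(\widehat X_{\textup{reg}})\cong\pi_1(A)$ and $\pi_1(\widehat F_{\textup{reg}})=\{1\}$, which is what makes $\widehat F$ maximally quasi-\'etale.
\end{itemize}
Your dynamical idea (an invariant discriminant must be empty, as in the proof of Theorem \ref{t:structure}) could stand in for the submersion step. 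You would still need the residual-finiteness argument to run your contradiction.

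\paragraph{Two smaller points in your setup.}
\begin{itemize}
\item The paper does not apply \cite[Theorem 1.3]{Yos21} to $X$ directly, since that result rests on the equivariant MMP for $\mathbb{Q}$-factorial klt varieties. Instead it:
  \begin{itemize}
  \item passes to a small $\mathbb{Q}$-factorialization $Y\to X$;
  \item lifts an iterate of $f$ to $Y$ by \cite[Theorem 1.2]{MYY24-lcy};
  \item applies Yoshikawa on $Y$;
  \item Stein-factorizes back to a quasi-\'etale cover of $X$;
  \item descends the Fano type property along the small map via \cite[Proposition 2.4]{Yos21}.
  \end{itemize}
  Your fallback of rerunning Yoshikawa's argument on $\widehat X$ faces the same $\mathbb{Q}$-factoriality issue.
\item The equality $q(X_1)=\widehat q(X)$ is proved in the paper rather than quoted. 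Take a further quasi-\'etale cover; a component of the preimage of a general fibre is of Fano type, hence rationally connected. So it cannot map onto a positive-dimensional subvariety of a larger Albanese torus.
\end{itemize}
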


\begin{proof}
Let \(\phi\colon Y\to X\) be a small \(\mathbb{Q}\)-factorialization of \(X\).
Then it follows from \cite[Theorem 1.2]{MYY24-lcy} (cf.~\cite[Theorem A1.3]{CZ25}) that, after replacing \(f\) by an iterate, \(f\) lifts to a surjective endomorphism \(g\) of \(Y\), which is also int-amplified (see Proposition \ref{p:int}). 
By \cite[Theorem 1.3]{Yos21}, we have the following \(f\)-equivariant commutative diagram:
\[
\xymatrix{
&A\save[]+<1.4pc,0.15pc>*{\racts\widehat{f}_A} \restore \\
\save[]+<-1.4pc,0.15pc>*{\widehat{g} \acts} \restore \widehat{Y} \ar[r]_{\widehat{\phi}}\ar[d]_{p_Y} \ar[ur]^{\sigma}&\widehat{X}\ar[u]_{\pi}\ar[d]^{p_X}\save[]+<1.4pc,0.15pc>*{\racts\widehat{f}} \restore \\
\save[]+<-1.4pc,0.15pc>*{g \acts} \restore  Y \ar[r]_{\phi} &X\save[]+<1.4pc,0.15pc>*{\racts f} \restore 
}
\]
Here, \(p_Y\) is the \(g\)-equivariant quasi-\'etale cover and \(\sigma\) is the Albanese morphism as in \cite[Theorem 1.3]{Yos21}; in particular, \(\sigma\) is a fibration of Fano type.
We take the Stein factorization of \(\phi\circ p_Y\) to obtain the small birational morphism \(\widehat{\phi}\colon \widehat{Y}\to\widehat{X}\) together with a quasi-\'etale cover \(p_X\colon\widehat{X}\to X\).
Then \(\widehat{X}\) is also klt. 
The existence of \(\widehat{f}\) follows from the universality of the Stein factorization, and \(\pi\) is the Albanese morphism of \(\widehat{X}\).
By Theorem \ref{thm-lift}, we may further replace \(\widehat{Y}\) by a \(\widehat{g}\)-equivariant maximally quasi-\'etale cover and  assume that \(\widehat{X}\) is maximally quasi-\'etale, noting that \(\widehat{\phi}\) is small.
By \cite[Theorem 1.3]{Yos21}, there exists an effective \(\mathbb{Q}\)-divisor \(\Delta\) such that \((\widehat{Y},\Delta)\) is klt and \(-(K_{\widehat{Y}}+\Delta)\) is ample over \(A\).
Since \(\widehat{\phi}\) is birational, it follows from \cite[Proposition 2.4]{Yos21} that \((\widehat{X},\widehat{\phi}_*\Delta)\) is also of Fano type over \(A\).
This proves (2).

We next prove that \(q(\widehat{X})=\widehat{q}(X)\).
Suppose, to the contrary, that this equality does not hold.
Then there exists a further quasi-\'etale cover \(X'\to\widehat{X}\) such that we have the following commutative diagram:
\[
\xymatrix{
X'\ar[d]_{p'}\ar[r]^{\pi'}&A'\ar[d]\\
\widehat{X}\ar[r]_\pi&A
}
\]
where the horizontal maps are Albanese morphisms and \(\dim A'>\dim A\).
Pick a general fibre \(F\) of \(\widehat{X}\to A\).
Take the normalization \(F'\) of an irreducible component of \(p'^{-1}(F)\).
Then the restriction \(F'\to F\) is quasi-\'etale as well. 
This implies that \(F'\) is also of Fano type, and hence rationally connected (see \cite{HM07}).
Since \(\dim A'>\dim A\), the image \(\pi'(F')\) is a positive-dimensional subvariety of \(A'\). 
This contradicts the fact that \(A'\) contains no rational curves.
This proves (1). 

Finally, we prove (3). 
By Theorem \ref{t:pos}, the tangent sheaf \(T_{\widehat{X}}\) is pseudo-effective.
Thus, by the same argument as at the beginning of the proof of Proposition \ref{prop-irreg}, we obtain that 
\[
\pi|_{\widehat{X}_{\textup{reg}}}\colon \widehat{X}_{\textup{reg}}\to A=\textup{Alb}(\widehat{X})
\]
is a submersion, and it induces the following short exact sequence:
\[
1\to \pi_1(\widehat{F}_{\textup{reg}})\to\pi_1(\widehat{X}_{\textup{reg}})\to \pi_1(A)\to 1.
\]
Since \(\widehat{F}\) is of Fano type, it follows that \(\pi_1(\widehat{F}_{\textup{reg}})\) is finite (see \cite[Theorem 2]{Bra21}; cf.~\cite[Theorem 2]{Xu14} and \cite[Theorem 1.13]{GKP16-duke}). 
Recall that a group \(G\) is called residually finite if the natural homomorphism \(G\to\widehat{G}\) to its profinite completion is injective.
Since \(\pi_1(A)\cong\mathbb{Z}^{2\dim(A)}\) is residually finite and good in the sense of \cite[Chapter I, Section 2.6]{Ser97} (see \cite[p.16, Exercise 2)(d)]{Ser97} and \cite[Proposition 3.4]{GJZZ08}), and since \(\pi_1(\widehat{F}_{\textup{reg}})\) is finite, it follows from \cite[Proposition 6.1]{GJZZ08} that \(\pi_1(\widehat{X}_{\textup{reg}})\) is also residually finite. 
In particular, we obtain an injection
\[
\pi_1(\widehat{X}_{\textup{reg}})\to \widehat{\pi}_1(\widehat{X}_{\textup{reg}}).
\]
On the other hand, since \(\pi_1(\widehat{X})\cong\pi_1(A)\) (see, e.g., \cite[Theorem 5.2]{Kol93}), we have \(\widehat{\pi}_1(\widehat{X})\cong\widehat{\pi}_1(A)\).
Together with the assumption \(\widehat{\pi}_1(\widehat{X})\cong\widehat{\pi}_1(\widehat{X}_{\textup{reg}})\), this yields an injection
\[
\pi_1(\widehat{X}_{\textup{reg}})\to \widehat{\pi}_1(A).
\]
This injection factors through \(\pi_1(A)\) by the short exact sequence induced by the submersion \(\pi|_{\widehat{X}_{\textup{reg}}}\). 
Thus we obtain an isomorphism \(\pi_1(\widehat{X}_{\textup{reg}})\cong\pi_1(A)\), which further implies that \(\pi_1(\widehat{F}_{\textup{reg}})=\{1\}\).
In particular, the regular locus of a general fibre \(\widehat{F}\) is simply connected; hence \(\widehat{F}\) has no non-trivial quasi-\'etale cover and is maximally quasi-\'etale.  
This completes the proof of Theorem \ref{t:qe-mrc} (3).
\end{proof}

\begin{remark}\label{r:cover}
Note that Yoshikawa's cover need not be maximally quasi-\'etale. 
For instance, there exists a Gorenstein del Pezzo surface of Picard number one that admits a non-isomorphic surjective endomorphism and also arises as a non-trivial quasi-\'etale quotient of the projective plane; see, e.g., \cite[Remark 1.7]{MZ19}.
\end{remark}

\section{Numerical dimension zero and the structure theorem}\label{s:num0}

In this section, we prove the existence of minimal models for compact K\"ahler klt varieties with numerical dimension zero log canonical divisor (see Theorem \ref{t:nd0}).
Then we estabilish the structure theorem for compact K\"ahler klt varieties with pseudo-effective tangent sheaf unconditionally (see Theorem \ref{t:psef-strc}).
Finally, we prove Theorem \ref{t:structure} and Corollary \ref{c:simply}.
\subsection{Termination of the MMP for the numerical dimension zero canonical divisor}
Following the spirit of \cite{Gon11}, this  subsection is devoted to the proof of the following theorem, which is a key ingredient of the proof of Theorem \ref{t:nd0}.
\begin{theorem}[{cf.~\cite[Theorem 5.2]{Gon11}}]\label{t:ter}
Let \((X,\Delta)\) be a strongly \(\mathbb{Q}\)-factorial compact K\"ahler klt pair. If \(K_X+\Delta\) is pseudo-effective and
\(\textup{nd}_X(K_X+\Delta)=0\), then every \((K_X+\Delta)\)-MMP with scaling of a K\"ahler class terminates with a log terminal model.
\end{theorem}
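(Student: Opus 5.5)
We follow Gongyo's strategy \cite[Theorem 5.2]{Gon11}, replacing the projective inputs by their K\"ahler counterparts. Since \(\textup{nd}_X(K_X+\Delta)=0\), Proposition-Definition \ref{prop-defn-num} gives, on a resolution \(\mu\colon Y\to X\), the equality \(\mu^*(K_X+\Delta)=\{N_Y(\mu^*(K_X+\Delta))\}\). Pushing forward, \(K_X+\Delta\equiv N\coloneqq N_X(K_X+\Delta)\), an effective \(\mathbb{R}\)-divisor whose class is rigid; there is no positive part. Our aim is to show that a \((K_X+\Delta)\)-MMP with scaling of a K\"ahler class (whose individual steps exist by \cite{HX26}) can perform only finitely many steps, and that when it stops the output divisor is nef, hence numerically trivial.

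\textbf{Step 1: control along the MMP.} Let \((X_i,\Delta_i)\) denote the outputs, with \(N_i\) the pushforward of \(N\). By Lemma \ref{l:nd}, \(\textup{nd}_{X_i}(K_{X_i}+\Delta_i)=0\) for all \(i\). The negativity lemma gives \(p^*(K_X+\Delta)=q^*(K_{X_i}+\Delta_i)+F\), so each \(N_i\) is the negative part \(N_{X_i}(K_{X_i}+\Delta_i)\) and \(K_{X_i}+\Delta_i\equiv N_i\). The supports \(\textup{Supp}\,N_i\) are the strict transforms of the components of \(N\) that have not been contracted.

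\textbf{Step 2: finiteness of divisorial contractions.} Each divisorial contraction is \((K_X+\Delta)\)-negative. The contracted divisor \(D\) satisfies \(N\cdot C<0\) for the curves \(C\) in the contracted ray, and those curves cover \(D\), so \(D\subseteq\textup{Supp}\,N_i\). Thus each divisorial contraction removes a component of \(N\), and only finitely many can occur. After finitely many steps, only flips remain.

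\textbf{Step 3: termination of flips.} This is the main obstacle. Gongyo reduces to the case where \(K_X+\Delta\equiv N\) with \(\textup{Supp}\,N\) among the components that can be absorbed into the boundary. We would first check whether \(N=0\), in which case \(K_X+\Delta\equiv0\) and the MMP stops immediately. Otherwise, we would try to use that a flip with scaling of a K\"ahler class \(\omega\) has scaling thresholds \(\lambda_i\) that are nonincreasing. Because \(K_{X_i}+\Delta_i+\lambda_i\omega_i\) is nef while \(K_{X_i}+\Delta_i\equiv N_i\) has numerical dimension zero, this forces \(\lambda_i\to0\). We would then run the argument as an MMP for the pair \((X,\Delta+tN)\), or equivalently \((X,\Delta-sN)\) for small \(s\), which is a klt pair with \(K_X+\Delta-sN\equiv(1-s)N\). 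This identifies the flips with flips of an MMP whose "log canonical divisor" is numerically \(cN\) with \(N\) effective and supported on finitely many divisors. We would then invoke special termination, or the termination of MMP with scaling for klt pairs with big boundary from \cite{HX26}, after adding a small K\"ahler perturbation supported in the class: since \(N\equiv K_X+\Delta\), we can write \(K_X+\Delta\equiv K_X+\Delta+\epsilon(N-N)\) and convert to a pair whose boundary is big (as with \(\Delta+\epsilon\omega\)) while the MMP remains \(N\)-negative. If the big-boundary termination of \cite{HX26} applies, the flips terminate. This conversion is the step I expect to need the most care.

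\textbf{Step 4: conclusion.} When the MMP stops, \(K_{X_k}+\Delta_k\) is nef. By Proposition-Definition \ref{prop-defn-num} (3)--(4), a nef class of numerical dimension zero on a klt (hence rational) space is zero, so \(N_k=0\) and \(K_{X_k}+\Delta_k\equiv0\). Consequently \((X_k,\Delta_k)\) is a log terminal model.
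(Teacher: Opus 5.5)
Your Steps 1, 2 and 4 are fine; in Step 2 the paper instead uses that each divisorial contraction lowers \(\dim H^{1,1}_{\mathrm{BC}}\). Step 3, however, is the whole content of the theorem, and it has a genuine gap.

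\textbf{The dichotomy in Step 3.} Your claim that nefness of \(K_{X_i}+\Delta_i+\lambda_i\omega_i\) together with \(\textup{nd}(K_{X_i}+\Delta_i)=0\) ``forces'' \(\lambda_i\to0\) is unjustified. The correct case split is on \(\lambda\coloneqq\lim_i\lambda_i\).

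If \(\lambda>0\), pick \(0<c<\lambda\). Then every ray \(R_i\) is still \((K_{X_i}+\Delta_i+c\omega_i)\)-negative. So the sequence is a \((K_X+\Delta+c\omega)\)-MMP with scaling of \((1-c)\omega\) for a generalized klt pair with big boundary \(\Delta+c\omega\), and it terminates by \cite[Theorem 5.16]{HX26}; this is Lemma \ref{l:lim}. This is the only legitimate form of your ``big boundary conversion.''

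Your manipulations with \((X,\Delta\pm sN)\) or \(K_X+\Delta+\epsilon(N-N)\) achieve nothing. The divisor \(\Delta-sN\) need not be effective. The divisor \(\Delta+tN\) is in general not big, and its MMP is literally the same MMP, since \(K_X+\Delta+tN\equiv(1+t)(K_X+\Delta)\). Adding \(c\omega\) keeps all steps negative only when \(c\) lies below every \(\lambda_i\). Hence, in the case you say is forced, \(\lambda_i\to0\), your argument gives no termination at all.

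\textbf{The missing idea for \(\lambda=0\).} The point is to combine \(\lambda=0\) with \(\textup{nd}=0\).
\begin{enumerate}
\item After the finitely many divisorial contractions, \(X\dashrightarrow X_i\) is a composite of flips, hence an isomorphism in codimension one.
\item For each flip, combine the negativity lemma with the fact that the threshold class \(K_{X_j}+\Delta_j+\lambda_j\omega_j\) descends to the flipping base. On a common resolution this gives \(p_i^*(K_X+\Delta+\lambda_i\omega)=q_i^*(K_{X_i}+\Delta_i+\lambda_i\omega_i)+E_i\), with \(E_i\geq0\) exceptional over both \(X\) and \(X_i\).
\item Since the first term on the right is nef, \(N_X(K_X+\Delta+\lambda_i\omega)=0\). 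That is, \(K_X+\Delta+\lambda_i\omega\) is modified nef on the \emph{fixed} model \(X\), for every \(i\).
\item Let \(i\to\infty\). Lower semicontinuity of Boucksom's minimal multiplicities shows that \(K_X+\Delta\) is modified nef.
\item Proposition-Definition \ref{prop-defn-num}(3), with \(\textup{nd}_X(K_X+\Delta)=0\), then gives \([K_X+\Delta]=0\). So your \(N\) is zero, contradicting the \((K_X+\Delta)\)-negativity of the first flipping ray.
\end{enumerate}
Thus \(\textup{nd}=0\) is not used to control the flips directly. It is used to upgrade ``limit of modified nef classes'' to ``numerically trivial.'' Without this argument, or some substitute for special termination in the K\"ahler setting (which you do not supply), your proof of termination of flips is incomplete.
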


We shall use the notation and terminology recalled in Subsection \ref{s:nd-prelim}, together with the MMP with scaling constructed in
\cite[Proposition 4.2]{HX26} (cf.~\cite{BCHM10}).

Let us briefly recall the MMP with scaling. 
Let \((X,\Delta)\) be a strongly
\(\mathbb{Q}\)-factorial compact K\"ahler klt pair, and assume that \(K_X+\Delta\) is pseudo-effective.
Choose a K\"ahler class \(\omega\) such that \(K_X+\Delta+\omega\) is nef. 
If \(K_X+\Delta\)
is not nef, then, after rescaling \(\omega\), we may assume that its
initial nef threshold is \(\lambda_0=1\). 
The construction produces
models \((X_i,\Delta_i)\), the birational transforms \(\omega_i\) of \(\omega\), the 
non-increasing scaling numbers
\[
1=\lambda_0\geq\lambda_1\geq\cdots\geq0
\]
and, whenever \(K_{X_i}+\Delta_i\) is not nef, a
\((K_{X_i}+\Delta_i)\)-negative extremal ray \(R_i\) such that
\[
(K_{X_i}+\Delta_i+\lambda_i\omega_i)\cdot R_i=0,\]
with \(K_{X_i}+\Delta_i+\lambda_i\omega_i\) being nef.  
The pseudo-effectivity of \(K_X+\Delta\) excludes  fibre type contractions. 
Indeed, the corresponding extremal contraction is projective by \cite[Proposition 4.2]{HX26}.
On a resolution of
\(X_i\), the strict transforms of general complete-intersection curves in the fibres would form a movable curve class in \(X_i\).  
Its intersection with the
pullback of the pseudo-effective class \(K_{X_i}+\Delta_i\) is
non-negative by \cite[Proposition 18.4(b)]{Dem12}, contradicting
negativity. 
Thus all steps considered below are birational.

The following lemma plays a key role in the proof of Theorem \ref{t:ter}.

\begin{lemma}\label{l:lim}
Let \((X,\Delta)\) be a strongly \(\mathbb{Q}\)-factorial compact K\"ahler
klt pair, and let \(\omega\) be a K\"ahler class such that
\(K_X+\Delta+\omega\) is nef. Suppose that a
\((K_X+\Delta)\)-MMP with scaling of \(\omega\) has scaling numbers
\[
1=\lambda_0\geq\lambda_1\geq\cdots,
~\lambda\coloneqq\lim_i\lambda_i>0.
\]
Then the MMP terminates.
\end{lemma}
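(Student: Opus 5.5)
The plan is to reduce to an MMP with scaling of a \emph{fixed} klt boundary that is ``big'', which terminates because of finiteness of models. Since $\lambda>0$, for every $i$ the class $K_{X_i}+\Delta_i+\lambda\omega_i$ is pseudo-effective. Each step is $(K_{X_i}+\Delta_i+\lambda\omega_i)$-negative: on $R_i$ we have $(K_{X_i}+\Delta_i+\lambda_i\omega_i)\cdot R_i=0$ and $(K_{X_i}+\Delta_i)\cdot R_i<0$, so $\omega_i\cdot R_i>0$. Hence
\[
(K_{X_i}+\Delta_i+\lambda\omega_i)\cdot R_i=(\lambda-\lambda_i)\,\omega_i\cdot R_i\le 0 .
\]
This is strict unless $\lambda_i=\lambda$. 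In other words, the sequence is an MMP for the pair $(X,\Delta+\lambda\omega)$, with $\lambda\omega$ a K\"ahler class, which plays the role of the ample boundary in \cite{BCHM10}.

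First I treat the steps with $\lambda_i=\lambda$, which may be degenerate. These happen only after the sequence stabilizes. From that point on, $K_{X_i}+\Delta_i+\lambda\omega_i$ is nef, and each step is $(K+\Delta+\lambda\omega)$-trivial. It is therefore an MMP over the contraction defined by this nef class, and that contraction is relative to a K\"ahler perturbation, so the relative situation has a ``big'' boundary. I expect to bound the number of such steps by the same finiteness argument used below, applied relatively.

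For the main part, I use the fact that $\lambda\omega$ is K\"ahler. For $0<\delta\ll\lambda$, the class $K_X+\Delta+\lambda\omega$ equals $(K_X+\Delta+(\lambda-\delta)\omega)+\delta\omega$. The pair $(X,\Delta)$ with the K\"ahler ``boundary'' $t\omega$, $t\in[\lambda,1]$, then lives in a compact polytope-like family of klt adjoint classes with a K\"ahler part. The key input is finiteness of log terminal models / weak lc models for klt pairs with K\"ahler boundary, ranging over a compact set of coefficients. This should be available in \cite{HX26}, or deducible from their existence of minimal models for big adjoint classes together with a Shokurov-polytope argument as in \cite[Corollary 1.1.5]{BCHM10}.

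Each $X_i$ with $\lambda_{i}>\lambda_{i+1}$ is a weak log canonical model of $K_X+\Delta+t\omega$ for every $t\in[\lambda_{i+1},\lambda_i]$. Since these models are $(K+\Delta+t\omega)$-negative, each is moreover a log terminal model. By finiteness, only finitely many distinct models $X_i$ occur. Each MMP step strictly decreases a suitable invariant: a divisorial contraction drops the Picard number, and a flip is not an isomorphism in codimension one and is $K+\Delta$-negative, so by the negativity lemma (as in Lemma \ref{l:nd}) the discrepancies strictly increase. Hence no model can repeat, and the MMP terminates.

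The main obstacle is the finiteness-of-models step in the non-projective K\"ahler setting, where the K\"ahler class $\omega$ is not a divisor. I would first try to quote the finiteness statement from \cite{HX26}. Failing that, I would reprove it along the lines of \cite{BCHM10}: approximate by models of $K+\Delta+t\omega$ via the existence of minimal models for big adjoint K\"ahler classes, then use the rational-polytope/compactness argument in the finite-dimensional space spanned by $\omega$ and the $\mathbb{Q}$-factorial divisor classes.
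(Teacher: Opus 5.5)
Your central idea, absorbing a multiple of $\omega$ into the boundary so that it becomes big, is the right one. As written, though, the argument has a genuine gap, and it comes from absorbing exactly $\lambda\omega$.

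With that choice the steps are only $(K+\Delta+\lambda\omega)$-non-positive. The case you set aside, $\lambda_i=\lambda$ for all $i\gg0$, is precisely where an infinite sequence could live, since the whole tail is then $(K+\Delta+\lambda\omega)$-trivial. Your treatment of that tail is not a proof. It assumes a contraction theorem for the nef class $K+\Delta+\lambda\omega$ and a relative finiteness statement, and you establish neither. The non-degenerate part also rests on a finiteness-of-log-terminal-models theorem for klt pairs with a K\"ahler ``boundary''. You only hope this is in \cite{HX26} or can be reproved along the lines of \cite{BCHM10}, so the proposal does not close. A smaller point: a flip \emph{is} an isomorphism in codimension one. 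Non-repetition comes only from the strict increase of discrepancies over the flipping locus.

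The missing step is to take $0<c<\lambda$ instead of $\lambda$ itself. Then for every $i$,
$(K_{X_i}+\Delta_i+c\omega_i)\cdot R_i=(c-\lambda_i)\,\omega_i\cdot R_i<0$,
so there are no degenerate steps at all. Put $H=(1-c)\omega$ and $s_i=(\lambda_i-c)/(1-c)$. Then
$K_{X_i}+\Delta_i+c\omega_i+s_iH_i=K_{X_i}+\Delta_i+\lambda_i\omega_i$,
which is nef and trivial on $R_i$, and $s_i$ is exactly the corresponding nef threshold. Hence the given sequence \emph{is} a $(K_X+\Delta+c\omega)$-MMP with scaling of $H$. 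It should be read as an MMP for the generalized klt pair $(X,\Delta+c\overline{\omega})$, with nef part the b-class $\overline{\omega}$. This formalism is needed because $\omega_i$ is no longer K\"ahler, or even nef, on $X_i$ after a flip. The generalized boundary is big because $c\omega$ is K\"ahler on $X$, so termination follows directly from \cite[Theorem 5.16]{HX26}, with no finiteness-of-models argument. This is the paper's proof.

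If you prefer to stay in your own framework, note that every $X_i$, including those in the stationary tail, is a weak log canonical model of $K+\Delta+\lambda_i\omega$ with $\lambda_i\in[\lambda,1]$. A finiteness statement for weak lc models, rather than log terminal models, would therefore absorb the degenerate case together with your non-repetition argument. That statement, however, would still have to be supplied in the K\"ahler setting.
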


\begin{proof}
Fix a positive number \(C\) with \(0<c<\lambda\). 
Then the generalized pair
\((X,\Delta+c\overline\omega)\) is generalized klt, where
\(\overline\omega\) denotes the b-\((1,1)\)-class whose trace on \(X\)
is \(\omega\). 
Its total generalized boundary \(\Delta+c\omega\) is big
because \(c\omega\) is K\"ahler. Set \(H=(1-c)\omega\) and
\[
s_i=\frac{\lambda_i-c}{1-c}.
\]
The minimality of the nef threshold \(\lambda_i\) for \(K_{X_i}+\Delta_i\) with scaling to \(\omega_i\) shows that \(s_i\) is the nef threshold for \(K_{X_i}+\Delta_i+c\omega_i\) with scaling of \(H_i\). Moreover, on the contracted ray \(R_i\), we have 
\[
(K_{X_i}+\Delta_i+c\omega_i)\cdot R_i
=(c-\lambda_i)\omega_i\cdot R_i<0
\]
and
\[
K_{X_i}+\Delta_i+c\omega_i+s_iH_i
=K_{X_i}+\Delta_i+\lambda_i\omega_i
\]
is a nef class and is trivial on \(R_i\). 
Thus the sequence is the
\((K_X+\Delta+c\omega)\)-MMP with scaling of \(H\), which terminates by
\cite[Theorem 5.16]{HX26} because its generalized boundary is
big.
\end{proof}

\begin{proof}[Proof of Theorem \ref{t:ter}]
Fix an arbitrary \((K_X+\Delta)\)-MMP with scaling of a K\"ahler class.
Rescale the initial class as above so that \(K_X+\Delta+\omega\) is nef
and, unless \(K_X+\Delta\) is already nef, \(\lambda_0=1\).

If this MMP terminates, then pseudo-effectivity rules out a Mori fibre
space, so its end product is a log terminal model. Suppose, to the
contrary, that the sequence
\[
(X,\Delta)=(X_0,\Delta_0)
\dashrightarrow(X_1,\Delta_1)
\dashrightarrow\cdots
\]
is infinite. 
Let \(\lambda=\lim_i\lambda_i\). 
By Lemma
\ref{l:lim}, we have \(\lambda=0\).

Since each divisorial contraction descreases the dimension of the Bott-Chern cohomology, there are only finitely many divisorial contractions.
Also, flips preserve the dimension. 
So after replacing \((X,\Delta)\) with the birational model obtained after the last divisorial contraction and applying Lemma \ref{l:nd},  
we may assume that every step is a flip and that the numerical dimension still satisfies \(\textup{nd}_X(K_X+\Delta)=0\).
The new scaling numbers still tend to zero, and every \(\lambda_i\) is
positive, since \(\lambda_i=0\) would mean that \(K_{X_i}+\Delta_i\) is
nef and the MMP ends. 
By Lemma \ref{l:nd}, we have for every \(i\), 
\(\textup{nd}_{X_i}(K_{X_i}+\Delta_i)=0\).

Fix an integer \(i>0\). 
For every \(j<i\), if \(R_j\) is the ray of the \(j\)-th
flip, then
\[
(K_{X_j}+\Delta_j+\lambda_j\omega_j)\cdot R_j=0,
~
(K_{X_j}+\Delta_j)\cdot R_j<0.
\]
Thus \(\omega_j\cdot R_j>0\). 
Since \(j<i\), we have 
\[
(K_{X_j}+\Delta_j+\lambda_i\omega_j)\cdot R_j\leq0.
\]
Let \(f_j\colon X_j\to Z_j\) and
\(f_j^+\colon X_{j+1}\to Z_j\) be the flipping contraction and its flip.
On a common resolution which we denote by 
\[
r_j\colon V_j\to X_j,
~r_j^+\colon V_j\to X_{j+1}.
\]
By the negativity lemma \cite[Lemma 1.3]{Wang21}, we have 
\[
r_j^*(K_{X_j}+\Delta_j)
={r_j^+}^*(K_{X_{j+1}}+\Delta_{j+1})+F_j,
\]
with \(F_j\geq0\) being effective \(r_j^+\)-exceptional divisor. 
By the cone theorem, the threshold class
\(K_{X_j}+\Delta_j+\lambda_j\omega_j\) descends to \(Z_j\) 
\cite[Lemma 2.39 and Definition 2.17]{HX26}; hence
\[
  r_j^*(K_{X_j}+\Delta_j+\lambda_j\omega_j)
  ={r_{j}^+}^*(K_{X_{j+1}}+\Delta_{j+1}+\lambda_j\omega_{j+1}).
\]
Since \(0<\lambda_{i}\leq\lambda_j\), combining the above two equalities, we have 
\begin{align*}
&r_j^*(K_{X_j}+\Delta_j+\lambda_{i}\omega_j)=r_j^*((1-\frac{\lambda_{i}}{\lambda_j})(K_{X_j}+\Delta_j)+\frac{\lambda_{i}}{\lambda_j}(K_{X_j}+\Delta_j+\lambda_j\omega_j))\\
&=\frac{\lambda_j-\lambda_{i}}{\lambda_j}({r_j^+}^*(K_{X_{j+1}}+\Delta_{j+1})+F_j)+\frac{\lambda_{i}}{\lambda_j}{r_{j}^+}^*(K_{X_{j+1}}+\Delta_{j+1}+\lambda_j\omega_{j+1})\\
&={r_j^+}^*(K_{X_{j+1}}+\Delta_{j+1}+\lambda_{i}\omega_j)+\left(1-\frac{\lambda_{i}}{\lambda_j}\right)F_j.
\end{align*}
The last summand is effective and \(r_j^+\)-exceptional; it is zero
when \(\lambda_{i}=\lambda_j\). 
Consequently, on a common resolution
\[
p_i\colon W_i\to X,
~q_i\colon W_i\to X_i,
\]
we have
\[
p_i^*(K_X+\Delta+\lambda_i\omega)
=q_i^*(K_{X_i}+\Delta_i+\lambda_i\omega_i)+E_i,
\]
where \(E_i\) is an effective \(q_i\)-exceptional divisor given by a
weighted sum of the transforms of the \(F_j\). 
Since the composite of flips  \(X\dashrightarrow X_i\) is
an isomorphism in codimension one, every component of \(E_i\) is also \(p_i\)-exceptional; otherwise its image on
\(X\) would be a divisor contracted by the composite
\(X\dashrightarrow X_i\). 
Hence \((p_i)_*E_i=0\).

Since the threshold class \(K_{X_i}+\Delta_i+\lambda_i\omega_i\) is nef,
its pullback by \(q_i\) has zero negative part. 
Therefore, by 
\cite[Lemma A.5]{DHY23}, we have 
\[
N_{W_i}\bigl(p_i^*(K_{X_i}+\Delta_i+\lambda_i\omega_i)+E_i\bigr)=E_i.
\]
Pushing forward by \(p_i\), we obtain
\(N_X(K_X+\Delta+\lambda_i\omega)=0\). 
In other words, \(K_X+\Delta+\lambda_i\omega\) is modified nef on \(X\)
for every \(i\).

Since \(\lambda_i\to0\), the classes
\(K_X+\Delta+\lambda_i\omega\) converge to \(K_X+\Delta\). Let
\(Q\subset X\) be a prime divisor and \(\widetilde Q\) its strict
transform on a resolution \(\mu\colon\widetilde X\to X\). The coefficient
identity for the negative part gives
\[
\textup{mult}_Q N_X(K_X+\Delta)
=\textup{mult}_{\widetilde Q}
N_{\widetilde X}\bigl(\mu^*(K_X+\Delta)\bigr)
=\nu_{\widetilde X}\bigl(\mu^*(K_X+\Delta),\widetilde Q\bigr).
\]
where the last item denotes the generic minimal multiplicity (see \cite[Definition 3.1]{Bou04}). 
By lower semicontinuity of generic minimal multiplicities
\cite[Proposition 3.5]{Bou04} on the pseudo-effective cone and the nefness of \(K_X+\Delta+\lambda_i\omega\), we obtain that,
\begin{align*}
0
\leq\nu_{\widetilde X}\bigl(\mu^*(K_X+\Delta),\widetilde Q\bigr)\leq\liminf_{i\to\infty}
\nu_{\widetilde X}\bigl(\mu^*(K_X+\Delta+\lambda_i\omega),\widetilde Q\bigr)=0.
\end{align*}
The last equality follows from
\(N_X(K_X+\Delta+\lambda_i\omega)=0\) and the same coefficient identity.
Thus \(K_X+\Delta\) is modified nef. Proposition-Definition
\ref{prop-defn-num} (3) now gives \([K_X+\Delta]=0\).
In particular, \((K_X+\Delta)\cdot R_0=0\), contradicting the defining inequality \((K_X+\Delta)\cdot R_0<0\). 
Hence the MMP terminates.
\end{proof}

\subsection{Minimal models for compact K\"ahler klt varieties}
Consequent from Theorem \ref{t:ter}, we obtain the existence of the minimal model for compact K\"ahler klt varieties with numerical dimension zero log canonical divisor.
\begin{theorem}\label{t:nd0}
Let \((X,\Delta)\) be a compact K\"ahler klt pair, where \(\Delta\geq0\) is an \(\mathbb{R}\)-divisor, \(K_X+\Delta\) is \(\mathbb{R}\)-Cartier, and its Bott--Chern class is pseudo-effective with
\(\textup{nd}_X(K_X+\Delta)=0\). Then there is a small crepant morphism
\[
\nu\colon(X^{\mathrm{q}},\Delta^{\mathrm{q}})\to(X,\Delta)
\]
from a strongly \(\mathbb{Q}\)-factorial compact K\"ahler klt pair.
There exists then a finite \((K_{X^{\mathrm{q}}}+\Delta^{\mathrm{q}})\)-MMP
\[
(X^{\mathrm{q}},\Delta^{\mathrm{q}})\dashrightarrow
(X_{\min},\Delta_{\min})
\]
such that \((X_{\min},\Delta_{\min})\) is strongly \(\mathbb{Q}\)-factorial klt and
\(K_{X_{\min}}+\Delta_{\min}\equiv0\). 
If in addition, \(\Delta\) is a \(\mathbb{Q}\)-divisor and \(K_X+\Delta\) is \(\mathbb{Q}\)-Cartier, then \(K_{X_{\min}}+\Delta_{\min}\sim_{\mathbb{Q}}0\).
\end{theorem}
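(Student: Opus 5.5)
First I would construct the small \(\mathbb{Q}\)-factorialization. I would take a log resolution \(\mu\colon Y\to X\) of \((X,\Delta)\) and write \(K_Y+\Delta_Y+E=\mu^*(K_X+\Delta)+F\). Here \(\Delta_Y\) is the strict transform of \(\Delta\) plus the exceptional divisors with negative discrepancy (with coefficients \(-a_i<1\)), and \(E\) collects small multiples of the remaining exceptional divisors. The resulting pair \((Y,\Delta_Y+E)\) is klt.

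I would then run a \((K_Y+\Delta_Y+E)\)-MMP with scaling over \(X\), using the relative version of \cite[Proposition 4.2]{HX26}. Since \(K_Y+\Delta_Y+E\equiv_X F'\) with \(F'\ge0\) exceptional, it is enough to contract every divisor in \(\operatorname{Supp}F'\). Termination over \(X\) follows from \cite[Theorem 5.16]{HX26}, because the pair is relatively big over \(X\) (the morphism is bimeromorphic), and the negativity lemma \cite[Lemma 1.3]{Wang21} shows that all of \(F'\) is contracted. The output \(\nu\colon X^{\mathrm q}\to X\) is small. It is crepant with \(\Delta^{\mathrm q}=\nu^{-1}_*\Delta\), is strongly \(\mathbb{Q}\)-factorial because MMP steps preserve strong \(\mathbb{Q}\)-factoriality, and is Kähler because it is projective over \(X\). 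This is the step I expect to be the main technical obstacle: the relative Kähler MMP over a non-projective base must be justified, and the choice of coefficients must make every exceptional divisor of discrepancy \(>-1\) lie in the support of \(F'\) with positive coefficient.

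Next, since \(\nu\) is small and crepant, \(K_{X^{\mathrm q}}+\Delta^{\mathrm q}=\nu^*(K_X+\Delta)\). Proposition-Definition \ref{prop-defn-num} (1) then gives \(\textup{nd}(K_{X^{\mathrm q}}+\Delta^{\mathrm q})=0\), and the class is pseudo-effective. Theorem \ref{t:ter} now yields a terminating MMP with scaling ending at a log terminal model \((X_{\min},\Delta_{\min})\) that is strongly \(\mathbb{Q}\)-factorial klt with \(K_{X_{\min}}+\Delta_{\min}\) nef. By Lemma \ref{l:nd}, its numerical dimension is still \(0\). Proposition-Definition \ref{prop-defn-num} (4), applied with rational singularities (klt), then gives \(K_{X_{\min}}+\Delta_{\min}\equiv0\), meaning it is \(0\) in \(H^{1,1}_{\mathrm{BC}}\).

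Finally, for \(\mathbb{Q}\)-coefficients, \(K_{X_{\min}}+\Delta_{\min}\) is \(\mathbb{Q}\)-Cartier and numerically trivial. I would invoke abundance for numerically trivial log canonical divisors on compact Kähler klt pairs. Concretely, I would pass to an index-one cover or a resolution and use the Kähler version of Nakayama/Campana–Höring–Peternell, which says that a torsion-free numerically trivial line bundle whose class is a klt \(K+\Delta\) is torsion, via the vanishing of \(h^1\)-deformations or Simpson/Kawamata arguments. This gives \(m(K_{X_{\min}}+\Delta_{\min})\sim0\).
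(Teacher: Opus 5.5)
Your Steps 2 and 3 are the paper's proof. The crepant pullback keeps $\textup{nd}=0$ by Proposition-Definition \ref{prop-defn-num}(1), Theorem \ref{t:ter} gives termination, Lemma \ref{l:nd} keeps $\textup{nd}=0$ on the output, and a nef class with $\textup{nd}=0$ on a klt space vanishes.

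\textbf{Step 1 differs from the paper.} The paper does not build the small $\mathbb{Q}$-factorialization. It quotes \cite[Theorem 2.27]{HX26}, which supplies $X^{\mathrm q}$ compact K\"ahler and strongly $\mathbb{Q}$-factorial, in exactly the form \cite[Proposition 4.2]{HX26} requires. Your BCHM-style reconstruction can be made to work, but as written it has a concrete error.

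\emph{The coefficient choice.} Suppose an exceptional $E_i$ with $-1<a_i<0$ gets coefficient exactly $-a_i$, and the $\epsilon$-multiples go only to the remaining divisors. Then $E_i$ has coefficient $0$ in $F$. Every divisor contracted by a $(K_Y+\Delta_Y+E)$-negative step over $X$ is covered by curves $C$ with $F\cdot C<0$ (with $F$ replaced by its pushforward at later steps), so it lies in $\mathrm{Supp}\,F$. Hence these $E_i$ survive, and the output is a crepant extraction of them, not a small morphism. Such $E_i$ do occur, e.g.\ for $\Delta=0$ and $X$ klt but not canonical.

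The fix is to give every $\mu$-exceptional divisor coefficient $1-\epsilon$. Then $F=\sum_i(1+a_i-\epsilon)E_i$ has full exceptional support, and your negativity-lemma argument does show the output is small.

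\emph{The relative MMP.} In this paper, \cite[Proposition 4.2]{HX26} and \cite[Theorem 5.16]{HX26} are applied only to absolute MMPs. They do not cover your relative run over a possibly non-projective compact K\"ahler base. You should use that $\mu$ is projective and appeal to the MMP for projective morphisms of complex analytic spaces. You must also check that strong $\mathbb{Q}$-factoriality in the K\"ahler-MMP sense survives the relative steps; this concerns all reflexive rank-one sheaves, not only Weil divisors. Citing \cite[Theorem 2.27]{HX26} is what lets the paper skip all of this.

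\textbf{The $\mathbb{Q}$-linear triviality.} The paper's input is \cite[Corollary 1.18]{CGP23}, which gives an effective $G\in|m(K_{X_{\min}}+\Delta_{\min})|$. Since $G\equiv0$, $G\cdot\omega^{n-1}=0$, so $G=0$. Your index-one-cover route is circular when $\Delta\neq0$: forming the cyclic cover presupposes the linear triviality you want. ``K\"ahler Nakayama/CHP'' is also not a precise statement for pairs in arbitrary dimension. What you need is non-vanishing for numerically trivial klt pairs on compact K\"ahler spaces, and CGP23 is the source for it.
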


\begin{proof}
By \cite[Theorem 2.27]{HX26}, there is a small birational morphism
\[
\nu\colon X'\to X
\]
such that \(X'\) is compact K\"ahler and strongly
\(\mathbb{Q}\)-factorial. Let \(\Delta'=\nu_*^{-1}\Delta\). Since \(\nu\)
is small and \(K_X+\Delta\) is \(\mathbb{R}\)-Cartier, we have
\[
K_{X'}+\Delta'=\nu^*(K_X+\Delta).
\]
Thus \(\nu\) is log crepant, \((X',\Delta')\) is klt, and
\(\Delta'\geq0\). If \(r\colon W\to X'\) is a resolution, then
\[
r^*(K_{X'}+\Delta')=(\nu\circ r)^*(K_X+\Delta),
\]
so the numerical dimension remains zero by Proposition-Definition
\ref{prop-defn-num}.

By \cite[Proposition 4.2]{HX26}, we may run a \((K_{X'}+\Delta')\)-MMP with scaling, and every intermediate model is still a strongly \(\mathbb{Q}\)-factorial compact K\"ahler klt pair.
By Theorem \ref{t:ter}, this MMP terminates after finitely many steps and ends with a strongly
\(\mathbb{Q}\)-factorial compact K\"ahler klt pair
\((X_{\min},\Delta_{\min})\) for which 
\(K_{X_{\min}}+\Delta_{\min}\) is nef. 
By Lemma
\ref{l:nd},
\[
\textup{nd}_{X_{\min}}(K_{X_{\min}}+\Delta_{\min})=0.
\]
Recall that klt varieties having rational singularities.
The end product \(K_{X_{\min}}+\Delta_{\min}\) is nef and thus modified nef, so that Proposition-Definition
\ref{prop-defn-num}(3) gives
\([K_{X_{\min}}+\Delta_{\min}]
=0\) 
in 
\(H^{1,1}_{\mathrm{BC}}(X_{\min},\mathbb{R})\)

Suppose now that \(\Delta\) is a \(\mathbb{Q}\)-divisor with \(K_X+\Delta\) \(\mathbb{Q}\)-Cartier.
Both properties are preserved under crepant pullback and throughout the MMP, so \(\Delta_{\min}\) is rational and \(K_{X_{\min}}+\Delta_{\min}\) is \(\mathbb{Q}\)-Cartier as well.
By \cite[Corollary 1.18]{CGP23}, there exist a sufficiently divisible integer \(m>0\) and an effective Cartier divisor \(G\in |m(K_{X_{\min}}+\Delta_{\min})|\).
Intersecting it with a K\"ahler class \((\dim X-1)\) times, we obtain that \(G\sim 0\).
This proves the asserted \(\mathbb{Q}\)-linear triviality.
\end{proof}

\subsection{MRC fibrations for pseudo-effective tangent sheaves, Proofs of Theorem \ref{t:structure} and Corollary \ref{c:simply}}
The proceeding Theorem \ref{t:nd0} for klt pairs resolves the only conjectural part required for the structure theorem in \cite{MZ-TJM}. 
Therefore, we are now able to obtain the structure theorem below, which generalizes all the previous works \cite{DPS94,HIM22,IMZ23,MQ25}. 
\begin{theorem}\label{t:psef-strc}
Let \(X\) be a compact klt K\"ahler variety with pseudo-effective tangent sheaf.
After a finite quasi-\'etale cover, there is a holomorphic MRC fibration
\(\pi\colon X\to T\)
onto a complex torus such that a very general fibre is a rationally connected
klt projective variety with pseudo-effective tangent sheaf.
\end{theorem}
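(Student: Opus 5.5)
The plan is to import the conditional structure theorem \cite[Theorem 1.6]{MZ-TJM}. Its only missing input, \cite[Conjecture 1.7]{MZ-TJM}, is the existence of minimal models for klt pairs whose log canonical class has numerical dimension zero. That input is now supplied by Theorem \ref{t:nd0}. The work therefore consists of retracing the argument of \cite{MZ-TJM} and checking that every place the conjecture is invoked falls within the hypotheses of Theorem \ref{t:nd0}.

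First, by Proposition \ref{prop-irreg} we pass to a finite quasi-\'etale cover, and then, using \cite[Theorem 2.5]{FO25}, to a maximally quasi-\'etale one. On this cover we arrange $q(X)=\widehat{q}(X)\leq \dim X$; pseudo-effectivity of $T_X$ survives reflexive pullback. By the argument at the start of the proof of Proposition \ref{prop-irreg}, nonvanishing of reflexive $1$-forms on $X_{\reg}$ makes the Albanese map $\pi\colon X\to T\coloneqq \textup{Alb}(X)$ a holomorphic morphism that is submersive on $X_{\reg}$; after Stein factorization, which changes nothing since it is the Albanese map, it is a fibration. If $\dim T=\dim X$, then $X$ is quasi-\'etale over a torus and we are done, with the fibre a point.

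Next we show that a very general fibre $F$ is rationally connected. Since $\pi$ is submersive on $X_{\reg}$, $F$ is klt and its tangent sheaf is the kernel of $T_X|_F\to \pi^*T_T|_F=\mathcal{O}_F^{\oplus \dim T}$. Restricted to a very general fibre, $T_X|_F$ remains pseudo-effective, and the kernel of a map from a pseudo-effective sheaf onto a trivial one is again pseudo-effective, as in \cite{MZ-TJM,HIM22}. This gives pseudo-effectivity of $T_F$. It then remains to show $\widehat{q}(F)=0$, after which Proposition \ref{prop-irreg} applies to $F$ and shows it is projective and rationally connected; $\pi$ is then the MRC fibration, since $T$ carries no rational curves.

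The main obstacle is ruling out positive augmented irregularity of $F$, equivalently showing that the relative MRC quotient is trivial. Following \cite{MZ-TJM}, we take the relative MRC fibration $X\dashrightarrow Y$ over $T$ and suppose $\dim Y>\dim T$. The base $Y$ is non-uniruled, so $K_Y$ is pseudo-effective by \cite{Ou25}. Pseudo-effectivity of $T_X$, through the dual injection $(\sigma_*\widetilde{\pi}^*\Omega_Y)^{\vee\vee}\subseteq\Omega^{[1]}_X$ as in Proposition \ref{prop-irreg}, forces $c_1$ of this subsheaf to vanish. This yields $\textup{nd}(K_Y)=0$ on a suitable klt model of $Y$.

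This is exactly where \cite{MZ-TJM} needed the conjecture. Theorem \ref{t:nd0} now produces a klt minimal model $Y_{\min}$ with $K_{Y_{\min}}\sim_{\mathbb{Q}}0$, and Theorem \ref{t:CDM} or the Beauville--Bogomolov-type decomposition then shows that, up to a quasi-\'etale cover, $Y_{\min}$ is a torus. This contributes extra irregularity to a quasi-\'etale cover of $X$, contradicting $q(X)=\widehat{q}(X)$. The delicate points to verify are that the relevant pairs on $Y$ are klt with $\mathbb{Q}$-Cartier log canonical class, so that Theorem \ref{t:nd0} applies, and that the cover of $Y_{\min}$ induces a quasi-\'etale cover of $X$.
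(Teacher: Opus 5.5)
Your opening paragraph is, in substance, the paper's entire proof. The paper notes that \cite[Conjecture 1.7]{MZ-TJM} asks that every compact K\"ahler \emph{manifold} $Y$ with $\textup{nd}_Y(K_Y)=0$ be bimeromorphic to a klt $Y_{\min}$ with $K_{Y_{\min}}\equiv 0$. That is Theorem \ref{t:nd0} with $\Delta=0$, since a smooth $Y$ is already strongly $\mathbb{Q}$-factorial klt with $K_Y$ Cartier. The paper then invokes \cite[Theorem 1.6]{MZ-TJM} as stated. It retraces nothing, and the ``delicate points'' you list about klt pairs and $\mathbb{Q}$-Cartier log canonical classes on $Y$ do not arise. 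Had you stopped after the first paragraph, your proposal would coincide with the paper's.

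The reconstruction you present as ``the work'', however, does not stand on its own, and its decisive step is a genuine gap.

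\emph{Torus step.} From $K_{Y_{\min}}\sim_{\mathbb{Q}}0$ alone you cannot conclude that $Y_{\min}$ is, up to a quasi-\'etale cover, a torus. The decomposition theorem leaves room for Calabi--Yau and irreducible symplectic factors. Theorem \ref{t:CDM} needs a pseudo-effective reflexive sheaf with $c_1=0$ \emph{on $Y_{\min}$}, i.e.\ pseudo-effectivity of $T_{Y_{\min}}$ or numerical flatness of $\Omega^{[1]}_{Y_{\min}}$. What you actually have is the numerically flat sheaf $(\sigma_*\widetilde{\pi}^*\Omega_Y)^{\vee\vee}$ on $X$, and you give no mechanism to transport it along the meromorphic map $X\dashrightarrow Y_{\min}$. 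Pseudo-effectivity of the tangent sheaf is not a bimeromorphic invariant (cf.\ Example \ref{ex-mqe}), so this transfer is part of the real content of \cite[Theorem 1.6]{MZ-TJM}, not a routine check.

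\emph{Contradiction step.} A quasi-\'etale cover of $Y_{\min}$ pulled back along a meromorphic map need not be quasi-\'etale over $X$. You need control such as the MRC map being an equidimensional morphism before you can contradict $q(X)=\widehat{q}(X)$. The shortcut at the end of Proposition \ref{prop-irreg} is also unavailable: trivializing the monodromy after a finite \'etale cover used $h^1(X,\mathcal{O}_X)=0$, whereas here $H_1$ has positive rank.

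\emph{Fibre step.} ``The kernel of a surjection from a pseudo-effective sheaf onto a trivial one is pseudo-effective'' is not a principle you can simply quote. Singular positive curvature passes to quotients, not to subsheaves, and a trivial quotient does not by itself give a splitting.

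So either keep only the citation, as the paper does, or supply these missing arguments.
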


\begin{proof}
Applying Theorem \ref{t:nd0} with \(\Delta=0\) shows that every compact K\"ahler manifold \(Y\) with \(\textup{nd}_Y(K_Y)=0\) is bimeromorphic to a compact klt K\"ahler \(Y_{\min}\) with \(K_{Y_{\min}}\equiv 0\).
This establishes 
\cite[Conjecture~1.7]{MZ-TJM}. 
Consequently, \cite[Theorem 1.6]{MZ-TJM} applies 
unconditionally, and yields the theorem.
\end{proof}

\begin{proof}[Proof of Theorem \ref{t:structure}]
By Theorem \ref{t:pos}, \(T_X\) is pseudo-effective. 
Then it follows from  Theorem \ref{t:psef-strc} that there is a maximally quasi-\'etale cover \(X'\to X\) such that \(X'\) admits a holomorphic MRC fibration onto a complex torus. 

By Theorem \ref{thm-lift} and its proof, there exists a further quasi-\'etale (and hence \'etale) cover \(\widehat{X}\to X'\) of \(X'\)  such that \(f\) lifts to a surjective endomorphism \(\widehat{f}\colon\widehat{X}\to\widehat{X}\).
Then \(\widehat{f}\) is int-amplified as well (see Proposition \ref{p:int}).
Taking the Stein factorization of \(\widehat{X}\to X\to T\), we obtain a surjective holomorphic map \(\widehat{X}\to\widehat{T}\) followed by a finite surjective morphism \(\widehat{T}\to T\). 
Since \(\widehat{X}\to X\) is \'etale, so it \(\widehat{T}\to T\); in particular, \(\widehat{T}\) is a complex torus as well.
Together with \(q(\widehat{X})=q(X)=\dim T\), and \(T_{\widehat{X}}\) being also pseudo-effective, we obtain that \(\widehat{X}\to\widehat{T}\) is the holomorphic MRC fibration.

In what follows, with \(X\) replaced by \(\widehat{X}\), we shall consider the holomorphic MRC fibration \(\pi\colon X\to T\) onto a complex torus, which is also the Albanese morphism.
Then \(f\) descends to \(g\colon T\to T\) by the universal property, which is also int-amplified.
By \cite[Proof of Lemma 4.3]{Men20}, the closed subset \(S\) of \(T\) over which \(\alpha\) has reducible, or non-reduced, or non-equidimensional fibers is \(g^{-1}\)-invariant.
Therefore, \cite[Lemma 5.4]{Zho21-Asian} implies that 
\(S\) is empty. 
Consequently, \(\pi\) has irreducible and reduced fibres, and is equidimensional and hence flat by the miracle flatness. 
By the flatness of \(\pi\) and the upper semicontinuity of \(h^i(X_t,\mathcal{O}_{X_t})\), 
we obtain that a general fibre satisfies \(h^i(X_t,\mathcal{O}_{X_t})=0\) for every \(i\geq 1\).
Hence, it follows from \cite[Theorem 4.1 and Corollary 4.2]{CH24} that \(\pi\) is projective. 

Finally, if \(X\) is smooth, then by applying \cite{MQ25}, the MRC fibration can be chosen to be smooth so that every fibre of \(\pi\) is smooth rationally connected.
By choosing any periodic point, which is dense in \(S\) (see \cite{DZ23}), we obtain that the associated periodic fibre is of Fano type by \cite[Corollary 1.4]{Yos21}. 
We finish the proof of the theorem.
\end{proof}

\begin{proof}[Proof of Corollary \ref{c:simply}]
First, by Theorem \ref{t:structure}, there exists an \(f\)-equivariant maximally quasi-\'etale cover \( \widehat{X}\to X\) such that \(\widehat{X}\) admits a holomorphic MRC fibration onto a complex torus \(T\).

\((1)\Rightarrow (2)\)  is trivial.
For 
\((2)\Rightarrow (3)\), assuming (2), we have \(q(\widehat{X})=0\) so that \(\dim T=0\) and \(\widehat{X}\) is rationally connected; consequently \(\widehat{X}\) has trivial fundamental group.

For \((3)\Rightarrow (1)\), 
assume that \(\pi_1(\widehat{X})\) is finite. 
Then by taking the resolution and applying \cite[Theorem]{Tak03} and \cite[Theorem 5.2]{Kol93}, we have \(\pi_1(\widehat{X})\cong\pi_1(T)\) is trivial.
Therefore, \(\widehat{X}\) is maximally quasi-\'etale and rationally connected.
Hence, by \cite{Yos21}, we see that it is of Fano type.
Then \(X\), as the finite quasi-\'etale quotient of \(\widehat{X}\), is of Fano type as well.

We finish the proof of the corollary.
\end{proof}

\section{Examples, Proof of Theorem \ref{thm-example}}\label{s:ex}
In this section, we collect several examples illustrating our main results and prove Theorem \ref{thm-example}. 
We first consider certain   manifolds \(X\) for which \(\mathcal{O}_{\mathbb{P}(T_X)}(1)\) is pseudo-effective, while the tangent bundle itself is not pseudo-effective in the sense of Definition \ref{df-pos}. 

\begin{example}\label{ex-ruled} 
Let \(X=\mathbb{P}_C(\mathcal{E})\) be a ruled surface over a curve \(C\) of genus \(g(C)\). Then \(\mathcal{O}_{\mathbb{P}(T_X)}(1)\) is always pseudo-effective (see, e.g., \cite[Proposition 6.1]{JLZ25}). However, the tangent bundle \(T_X\) is pseudo-effective if and only if \(g(C)\leq 1\) (cf.~\cite[Proposition 4.2]{HIM22}). Moreover \(X\) admits an int-amplified endomorphism if and only if \(g(C)\leq 1\) and, after a possible \'etale base change, the vector bundle \(\mathcal{E}\) splits into a direct sum of line bundles; see \cite[Proposition 2.3.1]{ZhaSW06} and \cite[Section 7]{MY21}. 
\end{example}

To provide more interesting examples—like rationally connected manifolds—we construct the following one, which is based on a discussion with Masataka Iwai.
\begin{example}\label{ex-psef}
Fix any \(n\in\mathbb{N}\).
Let \(X\) be the blow-up of \(Y\coloneqq Y_1\times Y_2=\mathbb{P}^1\times\mathbb{P}^1\) at \(n\) distinct points lying in the same fibre of the first projection.
It follows from \cite[Proposition 4.5]{HIM22} that such a surface has a non-pseudo-effective tangent bundle whenever \(n\geq 5\).
Note that the ``general position'' assumption in \cite[Proposition 4.5]{HIM22} is used only to ensure that the projection of the blown-up points in a certain direction contains more than \(4\) points.  
In the present example, the blown-up points project to \(n\) distinct points via the second projection \(p_2\).
Thus we can apply \cite[Proposition 4.5]{HIM22} to conclude that \(T_X\) is not pseudo-effective. 
Moreover, such a surface admits an int-amplified endomorphism if and only if it is toric (see \cite{Nak02}), which is equivalent to \(n\leq 2\).

However, we can show that \(H^0(X,T_X)\neq 0\) and thus \(\mathcal{O}_{\mathbb{P}(T_X)}(1)\) is pseudo-effective.
We consider the following diagram:
\[
\xymatrix{ 
X\ar[r]^{\pi}&Y\ar[r]^{p_2}\ar[d]_{p_1}&Y_2\\
&Y_1&
}
\]
Suppose that the \(n\) distinct points lie in the same fibre \(\ell_1\) of the first projection \(p_1\).
Let \(\ell_2\) be any fibre of \(p_2\) which does not contain any of the \(n\) blown-up points.
Let \(E_1,\cdots,E_n\) be the \(\pi\)-exceptional divisors.
Then we have 
\[
K_X=\pi^*K_Y+\sum_{i=1}^n E_i.
\]
We note that \(T_Y\cong p_1^*\mathcal{O}_{Y_1}(2)\oplus p_2^*\mathcal{O}_{Y_2}(2)\cong \mathcal{O}_Y(2\ell_1)\oplus\mathcal{O}_Y(2\ell_2)\).
We also have the natural injection
\[
0\to \pi^*\Omega_Y\to\Omega_X.
\]
Hence we obtain
\[
0\to \pi^*T_Y(K_Y)\to T_X(K_X).
\]
This implies that
\[
0\to (\pi^*\mathcal{O}_{Y}(2\ell_1)\oplus \pi^*\mathcal{O}_Y(2\ell_2))\otimes \mathcal{O}_X(-\sum E_i)\to T_X.
\]
Since we have 
$$
\pi^*\mathcal{O}_Y(2\ell_1)\otimes \mathcal{O}_X(-\sum E_i)=\mathcal{O}_X(2\ell_1'+\sum E_i),$$
where \(\ell_1'\) is the proper transform of \(\ell_1\), we obtain \(H^0(X,T_X)\neq 0\).
\end{example}

In what follows, we give an example showing that the maximally quasi-\'etale assumption in Corollary \ref{c:simply} (3)  cannot be removed (cf.~Theorem \ref{t:psef-rc}).  
\begin{example}\label{ex-mqe}
Let \(A\) be an abelian surface and let \(X\) be the quotient of \(A\) by the involution. 
Then \(X\) has du Val singularities (indeed, 16 singularities of type \(A_1\)), and its minimal resolution \(\widetilde{X}\) is a K3 surface whose tangent bundle is not pseudo-effective (see, e.g., \cite[Theorem 1.6]{HP19}). 
By \cite[Theorem 1.1]{Tak03}, \(X\) is simply connected, and by \cite[Theorem 1.1]{Shi25}, \(X\) always admits an int-amplified endomorphism. 
However, \(X\) is not of Fano type, and such an int-amplified endomorphism cannot be lifted to its minimal resolution.
\end{example}

The Campana-Peternell conjecture \cite{CP91} asserts that  Fano manifolds with nef tangent bundles are rational homogeneous, in other words, they are quotients of semi-simple linear algebraic
groups by parabolic subgroups.
We may ask whether a rationally connected projective manifold with pseudo-effective tangent bundle is almost homogeneous, i.e., there exists an open dense \(\textup{Aut}^0(X)\)-orbit. 
Nevertheless, the following example shows that this is not the case.

\begin{example}\label{e:dp}
Let \(X\) be a del Pezzo manifold of degree four, that is, a complete intersection of two smooth quadric hypersurfaces.
When \(\dim(X)\geq3\), \(X\) has Picard number one. 
By \cite[Theorem 1.1 (a)]{BEHLV24},
we have \(H^0(X,\textup{Sym}^2T_X)\neq 0\) and hence the tautological line bundle \(\mathcal{O}_{\mathbb{P}(T_X)}(1)\) is pseudo-effective; further, its non-nef locus, which is contained in the stable base locus, does not dominate the base (see \cite[Proposition 3.1]{BEHLV24}).
Therefore, \(X\) has pseudo-effective tangent bundle. 
However, since \(H^0(X,T_X)=0\) and hence \(X\) has trivial connected component \(\textup{Aut}^0(X)\) of the automorphism group, 
\(X\) is not almost homogeneous.
\end{example}

Motivated by the dynamical perspective of \cite[Corollary 1.4]{Yos21}, 
we may ask whether every rationally connected projective manifold with pseudo-effective tangent bundle is of Fano type. However, the following example shows that the answer is negative, either.

\begin{example}\label{e:non-ft}
Let \(X\cong\mathbb{P}^3\) be the projective 3-space with the homogeneous  coordinates \([x_0:x_1:x_2:x_3]\).
Let \(H\) be the hyperplane section of \(X\) defined by \(\{x_0=0\}\). 
Then \(X\) admits a \(\mathbb{C}^*\)-action on \(X\) given by 
\[
t\cdot[x_0\colon x_1\colon x_2\colon x_3]=[x_0\colon tx_1\colon tx_2\colon tx_3]
\]
which fixes \(H\) pointwise.
Let \(S\) be any finite set of isolated points on \(H\) and denote by \(W_X\) (resp.~\(W_H\)) the blow-up of \(X\) (resp.~\(H\)) along \(S\).
Then it follows from \cite[Example 2.2 and Proposition 2.3]{FH20} that \(W_X\) is an almost homogeneous variety, and hence \(T_{W_X}\) is pseudo-effective, noting that there is a generically surjective morphism \(\mathcal{O}_{W_X}^{\oplus r}\to T_{W_X}\) for some \(r=h^0(W_X,T_{W_X})\).
More precisely, in this case, \(W_X\) admits a natural \(\mathbb{G}_a^3\)-action induced by the action on \(X\) as follows:
for each \((s_1,s_2,s_3)\in\mathbb{G}_a^3\), the action of \((s_1,s_2,s_3)\) on \(X\) is defined by
\[
(s_1,s_2,s_3).[x_0:x_1:x_2:x_3]=[x_0:x_1+s_1x_0:x_2+s_2x_0:x_3+s_3x_0].
\]
These actions fix \(H\) pointwise, and there is a unique open orbit of \(\mathbb{G}_a^3\colon W_X\to W_X\)  isomorphic to \(X\backslash H\).

On the other hand, if we choose a general finite set \(S\subseteq H\)  not contained in a single line, it follows from \cite[Proposition 6.1]{CLO16} that the Mori cone \(\overline{\textup{NE}}(W_X)\) of \(W_X\) coincides with the Mori cone \(\overline{\textup{NE}}(W_H)\) of \(W_H\).  
Since the latter contains infinitely many \((-1)\)-curves when \(S\) is chosen to contain more than 9 general points in \(H\), it follows that \(\overline{\textup{NE}}(W_X)\) is not finitely generated, either. 
In particular, by \cite[Corollary 1.3.2]{BCHM10}, such \(W_X\) is not of Fano type.
\end{example}

Let us conclude the paper with the following remark. 

\begin{remark}\label{r:pos-cur}
Besides the pseudo-effectivity as verified in Theorems \ref{t:pos} and \ref{t:pos-refine}, we may ask whether the tangent sheaf $T_{X}$ satisfies other  positivity properties when the variety \(X\) admits an int-amplified endomorphism, for example, whether $T_{X}$ is positively curved or almost nef (see, e.g., \cite[Definition 19.1]{HPS18}, \cite[Definition 2.2.2]{PT18}, and \cite[Definition 2.1]{IMZ23}); however, neither property holds in general.
\begin{enumerate}
\item By \cite[Proposition 2.3.1]{ZhaSW06}, the ruled surface \(\mathbb{P}(\mathcal{O}\oplus\mathcal{O}(np))\) over an elliptic curve admits an int-amplified endomorphism for any \(n\in\mathbb{N}\), but unless \(n=0\), its tangent bundle is not positively curved (see \cite[Proposition 4.2]{HIM22}).
In particular, \(\mathbb{P}(\mathcal{O}\oplus\mathcal{O}(np))\) is weakly positively curved but not positively curved when \(n\geq 1\).
\item By \cite[Theorem 1.1]{Shi25}, any quasi-\'etale quotient of an abelian variety admits an int-amplified endomorphism, but unless the quotient is smooth, its tangent sheaf is never almost nef (see \cite[Corollary 1.3]{IMZ23}; cf.~\cite[Example 6.5]{IMZ23}). 
\item A rationally connected projective manifold admitting an int-amplified endomorphism is conjectured to be toric (see \cite[Conjecture 4.4]{Fak03}).
Hence, in addition to Conjecture \ref{Conj-big}, it is still  reasonable to study these positivity properties for rationally connected projective manifolds. 
\end{enumerate}
\end{remark}

\bibliographystyle{amsalpha}

\bibliography{bib-ref}

@misc {Mat25,
    AUTHOR = {Matsumura, Shin-ichi},
     TITLE = {Fundamental groups of compact {K}\"ahler manifolds with
              semi-positive holomorphic sectional curvature},
      YEAR = {2025},
      NOTE = {Preprint, arXiv:2502.00367},
    EPRINT = {2502.00367},
ARCHIVEPREFIX = {arXiv},
PRIMARYCLASS = {math.DG},
       URL = {https://arxiv.org/abs/2502.00367},
}

@article {Bea01,
    AUTHOR = {Beauville, Arnaud},
     TITLE = {Endomorphisms of hypersurfaces and other manifolds},
   JOURNAL = {Internat. Math. Res. Notices},
  FJOURNAL = {International Mathematics Research Notices},
      YEAR = {2001},
    NUMBER = {1},
     PAGES = {53--58},
      ISSN = {1073-7928,1687-0247},
   MRCLASS = {14J70},
  MRNUMBER = {1809497},
MRREVIEWER = {S\'andor\ J.\ Kov\'acs},
       DOI = {10.1155/S1073792801000034},
       URL = {https://doi.org/10.1155/S1073792801000034},
}

@article {BEHLV24,
    AUTHOR = {Beauville, Arnaud and Etesse, Antoine and H\"oring, Andreas
              and Liu, Jie and Voisin, Claire},
     TITLE = {Symmetric tensors on the intersection of two quadrics and
              {L}agrangian fibration},
   JOURNAL = {Moduli},
  FJOURNAL = {Moduli},
    VOLUME = {1},
      YEAR = {2024},
     PAGES = {Paper No. e4, 19},
      ISSN = {2949-7647,2977-1382},
  MRCLASS = {70H06 (14J45)},
  MRNUMBER = {4892590},
MRREVIEWER = {Tetsuya\ Taniguchi},
       DOI = {10.1112/mod.2024.3},
       URL = {https://doi.org/10.1112/mod.2024.3},
}

@article {BCHM10,
    AUTHOR = {Birkar, Caucher and Cascini, Paolo and Hacon, Christopher D.
              and McKernan, James},
     TITLE = {Existence of minimal models for varieties of log general type},
   JOURNAL = {J. Amer. Math. Soc.},
  FJOURNAL = {Journal of the American Mathematical Society},
    VOLUME = {23},
      YEAR = {2010},
    NUMBER = {2},
     PAGES = {405--468},
      ISSN = {0894-0347,1088-6834},
   MRCLASS = {14E30 (14E05)},
  MRNUMBER = {2601039},
MRREVIEWER = {Mark\ Gross},
       DOI = {10.1090/S0894-0347-09-00649-3},
       URL = {https://doi.org/10.1090/S0894-0347-09-00649-3},
}

@phdthesis {Bou02,
    AUTHOR = {Boucksom, S{\'e}bastien},
     TITLE = {C{\^o}nes positifs des vari{\'e}t{\'e}s complexes compactes},
    SCHOOL = {Universit{\'e} Joseph Fourier, Grenoble I},
      YEAR = {2002},
       URL = {https://theses.hal.science/tel-00002268},
}

@article {Bou04,
    AUTHOR = {Boucksom, S\'{e}bastien},
     TITLE = {Divisorial {Z}ariski decompositions on compact complex
              manifolds},
   JOURNAL = {Ann. Sci. \'{E}cole Norm. Sup. (4)},
  FJOURNAL = {Annales Scientifiques de l'\'{E}cole Normale Sup\'{e}rieure. Quatri\`eme
              S\'{e}rie},
    VOLUME = {37},
      YEAR = {2004},
    NUMBER = {1},
     PAGES = {45--76},
      ISSN = {0012-9593},
   MRCLASS = {32J18 (32C30)},
  MRNUMBER = {2050205},
MRREVIEWER = {Adam Gregory Harris},
       DOI = {10.1016/j.ansens.2003.04.002},
       URL = {https://doi.org/10.1016/j.ansens.2003.04.002},
}

@article {BEGZ10,
    AUTHOR = {Boucksom, S{\'e}bastien and Eyssidieux, Philippe and Guedj,
              Vincent and Zeriahi, Ahmed},
     TITLE = {Monge--Amp{\`e}re equations in big cohomology classes},
   JOURNAL = {Acta Math.},
  FJOURNAL = {Acta Mathematica},
    VOLUME = {205},
      YEAR = {2010},
    NUMBER = {2},
     PAGES = {199--262},
      ISSN = {0001-5962,1871-2509},
   MRCLASS = {32U40 (32Q20 32U15 32W20)},
  MRNUMBER = {2746347},
MRREVIEWER = {S{\l}awomir\ Dinew},
       DOI = {10.1007/s11511-010-0054-7},
       URL = {https://doi.org/10.1007/s11511-010-0054-7},
}

@incollection {BG13,
    AUTHOR = {Boucksom, S{\'e}bastien and Guedj, Vincent},
     TITLE = {Regularizing properties of the {K}\"ahler--{R}icci flow},
 BOOKTITLE = {An introduction to the {K}\"ahler--{R}icci flow},
    SERIES = {Lecture Notes in Math.},
    VOLUME = {2086},
     PAGES = {189--237},
 PUBLISHER = {Springer, Cham},
      YEAR = {2013},
      ISBN = {978-3-319-00818-9,978-3-319-00819-6},
   MRCLASS = {32W20 (14E99 53C44)},
  MRNUMBER = {3185334},
MRREVIEWER = {S{\l}awomir\ Ko{\l}odziej},
       DOI = {10.1007/978-3-319-00819-6_4},
       URL = {https://doi.org/10.1007/978-3-319-00819-6_4},
}

@article {BdFF12,
    AUTHOR = {Boucksom, Sebastien and de Fernex, Tommaso and Favre, Charles},
     TITLE = {The volume of an isolated singularity},
   JOURNAL = {Duke Math. J.},
  FJOURNAL = {Duke Mathematical Journal},
    VOLUME = {161},
      YEAR = {2012},
    NUMBER = {8},
     PAGES = {1455--1520},
      ISSN = {0012-7094,1547-7398},
   MRCLASS = {14J17 (14B05 14C20 14F18)},
  MRNUMBER = {2931273},
MRREVIEWER = {Cihan\ \"Ozg\"ur},
       DOI = {10.1215/00127094-1593317},
       URL = {https://doi.org/10.1215/00127094-1593317},
}

@article {DP04,
    AUTHOR = {Demailly, Jean-Pierre and Paun, Mihai},
     TITLE = {Numerical characterization of the {K}\"ahler cone of a
              compact {K}\"ahler manifold},
   JOURNAL = {Ann. of Math. (2)},
  FJOURNAL = {Annals of Mathematics. Second Series},
    VOLUME = {159},
      YEAR = {2004},
    NUMBER = {3},
     PAGES = {1247--1274},
      ISSN = {0003-486X,1939-8980},
   MRCLASS = {32J27 (32Q15)},
  MRNUMBER = {2113021},
MRREVIEWER = {Philippe\ P.\ Eyssidieux},
       DOI = {10.4007/annals.2004.159.1247},
       URL = {https://doi.org/10.4007/annals.2004.159.1247},
}

@article {Bra21,
    AUTHOR = {Braun, Lukas},
     TITLE = {The local fundamental group of a {K}awamata log terminal
              singularity is finite},
   JOURNAL = {Invent. Math.},
  FJOURNAL = {Inventiones Mathematicae},
    VOLUME = {226},
      YEAR = {2021},
    NUMBER = {3},
     PAGES = {845--896},
      ISSN = {0020-9910,1432-1297},
   MRCLASS = {14F35 (14B05 14J45 32S50)},
  MRNUMBER = {4337973},
       DOI = {10.1007/s00222-021-01062-0},
       URL = {https://doi.org/10.1007/s00222-021-01062-0},
}

@article {Cam04,
    AUTHOR = {Campana, Fr\'ed\'eric},
     TITLE = {Orbifolds, special varieties and classification theory},
   JOURNAL = {Ann. Inst. Fourier (Grenoble)},
  FJOURNAL = {Universit\'e{} de Grenoble. Annales de l'Institut Fourier},
    VOLUME = {54},
      YEAR = {2004},
    NUMBER = {3},
     PAGES = {499--630},
      ISSN = {0373-0956,1777-5310},
   MRCLASS = {14E05 (14D06 14J40 32Q57 35Q15)},
  MRNUMBER = {2097416},
MRREVIEWER = {Dan\ Abramovich},
       DOI = {10.5802/aif.2027},
       URL = {https://doi.org/10.5802/aif.2027},
}

@article {CP91,
    AUTHOR = {Campana, Fr\'{e}d\'{e}ric and Peternell, Thomas},
     TITLE = {Projective manifolds whose tangent bundles are numerically
              effective},
   JOURNAL = {Math. Ann.},
  FJOURNAL = {Mathematische Annalen},
    VOLUME = {289},
      YEAR = {1991},
    NUMBER = {1},
     PAGES = {169--187},
      ISSN = {0025-5831},
   MRCLASS = {14J30 (14E20)},
  MRNUMBER = {1087244},
MRREVIEWER = {I. Dolgachev},
       DOI = {10.1007/BF01446566},
       URL = {https://doi.org/10.1007/BF01446566},
}

@misc {CDM26,
    AUTHOR = {Cao, Junyan and Deng, Ya and Matsumura, Shin-ichi},
     TITLE = {A flatness criterion for pseudo-effective sheaves on compact
              {K}\"ahler spaces},
      YEAR = {2026},
       NOTE = {Preprint, arXiv:2609.05154},
    EPRINT = {2609.05154},
ARCHIVEPREFIX = {arXiv},
PRIMARYCLASS = {math.AG},
       URL = {https://arxiv.org/abs/2609.05154},
}

@article {CGP23,
    AUTHOR = {Cao, Junyan and Guenancia, Henri and P\u aun, Mihai},
     TITLE = {Variation of singular {K}\"ahler-{E}instein metrics: {K}odaira
              dimension zero},
      NOTE = {With an appendix by Valentino Tosatti},
   JOURNAL = {J. Eur. Math. Soc. (JEMS)},
  FJOURNAL = {Journal of the European Mathematical Society (JEMS)},
    VOLUME = {25},
      YEAR = {2023},
    NUMBER = {2},
     PAGES = {633--679},
      ISSN = {1435-9855,1435-9863},
   MRCLASS = {14J10 (14E30 14J32 32Q20)},
  MRNUMBER = {4556792},
MRREVIEWER = {Ruadha\'i\ Dervan},
       DOI = {10.4171/jems/1184},
       URL = {https://doi.org/10.4171/jems/1184},
}

@misc {CZ25,
    AUTHOR = {Chang, Wentao and Zhang, De-Qi},
     TITLE = {Log {C}alabi--{Y}au structure of algebaic varieties admitting
              a polarized endomorphism},
      YEAR = {2025},
      NOTE = {Preprint, arXiv:2509.17927},
    EPRINT = {2509.17927},
ARCHIVEPREFIX = {arXiv},
PRIMARYCLASS = {math.AG},
       URL = {https://arxiv.org/abs/2509.17927},
}

@misc {CH24,
    AUTHOR = {Claudon, Beno{\^\i}t and H{\"o}ring, Andreas},
     TITLE = {Projectivity criteria for {K}\"ahler morphisms},
      YEAR = {2024},
      NOTE = {Preprint, arXiv:2404.13927},
    EPRINT = {2404.13927},
ARCHIVEPREFIX = {arXiv},
PRIMARYCLASS = {math.AG},
       URL = {https://arxiv.org/abs/2404.13927},
}

@article {CLO16,
    AUTHOR = {Coskun, Izzet and Lesieutre, John and Ottem, John Christian},
     TITLE = {Effective cones of cycles on blowups of projective space},
   JOURNAL = {Algebra Number Theory},
  FJOURNAL = {Algebra \& Number Theory},
    VOLUME = {10},
      YEAR = {2016},
    NUMBER = {9},
     PAGES = {1983--2014},
      ISSN = {1937-0652,1944-7833},
   MRCLASS = {14C25 (14E07 14E30 14M07)},
  MRNUMBER = {3576118},
MRREVIEWER = {Fumio\ Hazama},
       DOI = {10.2140/ant.2016.10.1983},
       URL = {https://doi.org/10.2140/ant.2016.10.1983},
}

@misc {DHY23,
    AUTHOR = {Das, Omprokash and Hacon, Christopher and Y{\'a}{\~n}ez,
              Jos{\'e} Ignacio},
     TITLE = {{MMP} for generalized pairs on {K}\"ahler 3-folds},
      YEAR = {2023},
      NOTE = {To appear, arXiv:2305.00524},
    EPRINT = {2305.00524},
ARCHIVEPREFIX = {arXiv},
PRIMARYCLASS = {math.AG},
       URL = {https://arxiv.org/abs/2305.00524},
}

@article {DPS94,
    AUTHOR = {Demailly, Jean-Pierre and Peternell, Thomas and Schneider,
              Michael},
     TITLE = {Compact complex manifolds with numerically effective tangent
              bundles},
   JOURNAL = {J. Algebraic Geom.},
  FJOURNAL = {Journal of Algebraic Geometry},
    VOLUME = {3},
      YEAR = {1994},
    NUMBER = {2},
     PAGES = {295--345},
      ISSN = {1056-3911},
   MRCLASS = {32J27 (14J45 32L07)},
  MRNUMBER = {1257325},
MRREVIEWER = {Yoichi Miyaoka},
}

@article {Dem85,
    AUTHOR = {Demailly, Jean-Pierre},
     TITLE = {Mesures de {M}onge-{A}mp\`ere et caract\'erisation
              g\'eom\'etrique des vari\'et\'es alg\'ebriques affines},
   JOURNAL = {M\'em. Soc. Math. France (N.S.)},
  FJOURNAL = {M\'emoires de la Soci\'et\'e{} Math\'ematique de France.
              Nouvelle S\'erie},
    NUMBER = {19},
      YEAR = {1985},
     PAGES = {124},
      ISSN = {0037-9484},
   MRCLASS = {32H35 (32C10 32F05)},
  MRNUMBER = {813252},
MRREVIEWER = {G.\ M.\ Khenkin},
}

@book {Dem12,
    AUTHOR = {Demailly, Jean-Pierre},
     TITLE = {Analytic methods in algebraic geometry},
    SERIES = {Surveys of Modern Mathematics},
    VOLUME = {1},
 PUBLISHER = {International Press, Somerville, MA; Higher Education Press,
              Beijing},
      YEAR = {2012},
     PAGES = {viii+231},
      ISBN = {978-1-57146-234-3},
   MRCLASS = {32-02 (14C30 14F18 32J25 32Q15 32U40)},
  MRNUMBER = {2978333},
MRREVIEWER = {Valentino Tosatti},
}

@article {DS04-Ens,
    AUTHOR = {Dinh, Tien-Cuong and Sibony, Nessim},
     TITLE = {Regularization of currents and entropy},
   JOURNAL = {Ann. Sci. \'Ecole Norm. Sup. (4)},
  FJOURNAL = {Annales Scientifiques de l'\'Ecole Normale Sup\'erieure.
              Quatri\`eme S\'erie},
    VOLUME = {37},
      YEAR = {2004},
    NUMBER = {6},
     PAGES = {959--971},
      ISSN = {0012-9593},
   MRCLASS = {32U40 (32C30 32H04 32Q15 37B40)},
  MRNUMBER = {2119243},
MRREVIEWER = {Ma\l gorzata\ Stawiska},
       DOI = {10.1016/j.ansens.2004.09.002},
       URL = {https://doi.org/10.1016/j.ansens.2004.09.002},
}

@article {DS05,
    AUTHOR = {Dinh, Tien-Cuong and Sibony, Nessim},
     TITLE = {Une borne sup\'erieure pour l'entropie topologique d'une
              application rationnelle},
   JOURNAL = {Ann. of Math. (2)},
  FJOURNAL = {Annals of Mathematics. Second Series},
    VOLUME = {161},
      YEAR = {2005},
    NUMBER = {3},
     PAGES = {1637--1644},
      ISSN = {0003-486X,1939-8980},
   MRCLASS = {32H50 (37B40 37F05)},
  MRNUMBER = {2180409},
MRREVIEWER = {Jean-Yves\ Briend},
       DOI = {10.4007/annals.2005.161.1637},
       URL = {https://doi.org/10.4007/annals.2005.161.1637},
}

@article {DZ23,
    AUTHOR = {Dinh, Tien-Cuong and Zhong, Guolei},
     TITLE = {Periodic points for meromorphic self-maps of {F}ujiki
              varieties},
   JOURNAL = {Indiana Univ. Math. J.},
  FJOURNAL = {Indiana University Mathematics Journal},
    VOLUME = {74},
      YEAR = {2025},
    NUMBER = {6},
     PAGES = {1561--1587},
      ISSN = {0022-2518,1943-5258},
   MRCLASS = {37F80 (32H04 32H50 37C35)},
  MRNUMBER = {5011964},
       DOI = {10.1512/iumj.2025.74.60572},
       URL = {https://doi.org/10.1512/iumj.2025.74.60572},
}

@article {Fak03,
    AUTHOR = {Fakhruddin, Najmuddin},
     TITLE = {Questions on self maps of algebraic varieties},
   JOURNAL = {J. Ramanujan Math. Soc.},
  FJOURNAL = {Journal of the Ramanujan Mathematical Society},
    VOLUME = {18},
      YEAR = {2003},
    NUMBER = {2},
     PAGES = {109--122},
      ISSN = {0970-1249,2320-3110},
   MRCLASS = {14G05 (37A45)},
  MRNUMBER = {1995861},
MRREVIEWER = {Yuri\ Tschinkel},
}

@article {FH20,
    AUTHOR = {Fu, Baohua and Hwang, Jun-Muk},
     TITLE = {Euler-symmetric projective varieties},
   JOURNAL = {Algebr. Geom.},
  FJOURNAL = {Algebraic Geometry},
    VOLUME = {7},
      YEAR = {2020},
    NUMBER = {3},
     PAGES = {377--389},
      ISSN = {2313-1691,2214-2584},
   MRCLASS = {14M27},
  MRNUMBER = {4087864},
MRREVIEWER = {Hossein\ Sabzrou},
       DOI = {10.14231/ag-2020-011},
       URL = {https://doi.org/10.14231/ag-2020-011},
}

@article {Gon11,
    AUTHOR = {Gongyo, Yoshinori},
     TITLE = {On the minimal model theory for dlt pairs of numerical log
              {K}odaira dimension zero},
   JOURNAL = {Math. Res. Lett.},
  FJOURNAL = {Mathematical Research Letters},
    VOLUME = {18},
      YEAR = {2011},
    NUMBER = {5},
     PAGES = {991--1000},
      ISSN = {1073-2780},
   MRCLASS = {14E30},
  MRNUMBER = {2875871},
MRREVIEWER = {Mihnea\ Popa},
       DOI = {10.4310/MRL.2011.v18.n5.a16},
       URL = {https://doi.org/10.4310/MRL.2011.v18.n5.a16},
}

@article {GKP16-duke,
    AUTHOR = {Greb, Daniel and Kebekus, Stefan and Peternell, Thomas},
     TITLE = {\'{E}tale fundamental groups of {K}awamata log terminal spaces,
              flat sheaves, and quotients of abelian varieties},
   JOURNAL = {Duke Math. J.},
  FJOURNAL = {Duke Mathematical Journal},
    VOLUME = {165},
      YEAR = {2016},
    NUMBER = {10},
     PAGES = {1965--2004},
      ISSN = {0012-7094},
   MRCLASS = {14B25 (14B05 14E30 14J17)},
  MRNUMBER = {3522654},
MRREVIEWER = {James McKernan},
       DOI = {10.1215/00127094-3450859},
       URL = {https://doi.org/10.1215/00127094-3450859},
}

@article {GJZZ08,
    AUTHOR = {Grunewald, F. and Jaikin-Zapirain, A. and Zalesskii, P. A.},
     TITLE = {Cohomological goodness and the profinite completion of
              {B}ianchi groups},
   JOURNAL = {Duke Math. J.},
  FJOURNAL = {Duke Mathematical Journal},
    VOLUME = {144},
      YEAR = {2008},
    NUMBER = {1},
     PAGES = {53--72},
      ISSN = {0012-7094,1547-7398},
   MRCLASS = {20E18 (11F06 19B37 20H05 20H10 20J05)},
  MRNUMBER = {2429321},
MRREVIEWER = {John\ G.\ Ratcliffe},
       DOI = {10.1215/00127094-2008-031},
       URL = {https://doi.org/10.1215/00127094-2008-031},
}

@incollection {HPS18,
    AUTHOR = {Hacon, Christopher and Popa, Mihnea and Schnell, Christian},
     TITLE = {Algebraic fiber spaces over abelian varieties: around a recent
              theorem by {C}ao and {P}\u{a}un},
 BOOKTITLE = {Local and global methods in algebraic geometry},
    SERIES = {Contemp. Math.},
    VOLUME = {712},
     PAGES = {143--195},
 PUBLISHER = {Amer. Math. Soc., [Providence], RI},
      YEAR = {2018},
   MRCLASS = {14E30 (14K05)},
  MRNUMBER = {3832403},
MRREVIEWER = {Kenta Hashizume},
       DOI = {10.1090/conm/712/14346},
       URL = {https://doi.org/10.1090/conm/712/14346},
}

@article {HM07,
    AUTHOR = {Hacon, Christopher D. and Mckernan, James},
     TITLE = {On {S}hokurov's rational connectedness conjecture},
   JOURNAL = {Duke Math. J.},
  FJOURNAL = {Duke Mathematical Journal},
    VOLUME = {138},
      YEAR = {2007},
    NUMBER = {1},
     PAGES = {119--136},
      ISSN = {0012-7094},
   MRCLASS = {14E30 (14E05 14J45)},
  MRNUMBER = {2309156},
MRREVIEWER = {Mihnea Popa},
       DOI = {10.1215/S0012-7094-07-13813-4},
       URL = {https://doi.org/10.1215/S0012-7094-07-13813-4},
}

@misc {HX26,
    AUTHOR = {Hacon, Christopher and Xie, Lingyao},
     TITLE = {On the {K}\"ahler {MMP} and the transcendental
              base-point-free theorem},
      YEAR = {2026},
      NOTE = {Preprint, arXiv:2607.24986},
    EPRINT = {2607.24986},
ARCHIVEPREFIX = {arXiv},
PRIMARYCLASS = {math.AG},
       URL = {https://arxiv.org/abs/2607.24986},
}

@article {HIM22,
    AUTHOR = {Hosono, Genki and Iwai, Masataka and Matsumura, Shin-ichi},
     TITLE = {On projective manifolds with pseudo-effective tangent bundle},
   JOURNAL = {J. Inst. Math. Jussieu},
  FJOURNAL = {Journal of the Institute of Mathematics of Jussieu. JIMJ.
              Journal de l'Institut de Math\'{e}matiques de Jussieu},
    VOLUME = {21},
      YEAR = {2022},
    NUMBER = {5},
     PAGES = {1801--1830},
      ISSN = {1474-7480,1475-3030},
   MRCLASS = {32J25 (14J26 32L99 58A30)},
  MRNUMBER = {4476130},
MRREVIEWER = {Ahmed\ Lesfari},
       DOI = {10.1017/S1474748020000754},
       URL = {https://doi.org/10.1017/S1474748020000754},
}

@article {HLS22,
    AUTHOR = {H\"{o}ring, Andreas and Liu, Jie and Shao, Feng},
     TITLE = {Examples of {F}ano manifolds with non-pseudoeffective tangent
              bundle},
   JOURNAL = {J. Lond. Math. Soc. (2)},
  FJOURNAL = {Journal of the London Mathematical Society. Second Series},
    VOLUME = {106},
      YEAR = {2022},
    NUMBER = {1},
     PAGES = {27--59},
      ISSN = {0024-6107},
   MRCLASS = {14J45 (14E30 14J40)},
  MRNUMBER = {4454484},
MRREVIEWER = {Enrica Floris},
       DOI = {10.1112/jlms.12567},
       URL = {https://doi.org/10.1112/jlms.12567},
}

@article {HP19,
    AUTHOR = {H\"{o}ring, Andreas and Peternell, Thomas},
     TITLE = {Algebraic integrability of foliations with numerically trivial
              canonical bundle},
   JOURNAL = {Invent. Math.},
  FJOURNAL = {Inventiones Mathematicae},
    VOLUME = {216},
      YEAR = {2019},
    NUMBER = {2},
     PAGES = {395--419},
      ISSN = {0020-9910},
   MRCLASS = {14J30 (14J32 37F75)},
  MRNUMBER = {3953506},
MRREVIEWER = {James McKernan},
       DOI = {10.1007/s00222-018-00853-2},
       URL = {https://doi.org/10.1007/s00222-018-00853-2},
}

@misc {IMZ23,
    AUTHOR = {Iwai, Masataka and Matsumura, Shin-ichi and Zhong, Guolei},
     TITLE = {Positivity of tangent sheaves of projective varieties---the
              structure of {MRC} fibrations},
      YEAR = {2023},
      NOTE = {To appear in Algebr. Geom., arXiv:2309.09489},
    EPRINT = {2309.09489},
ARCHIVEPREFIX = {arXiv},
PRIMARYCLASS = {math.AG},
       URL = {https://arxiv.org/abs/2309.09489},
}

@misc {IJZ25,
    AUTHOR = {Iwai, Masataka and Jinnouchi, Satoshi and Zhang, Shiyu},
     TITLE = {The {M}iyaoka--{Y}au inequality for singular varieties with
              big canonical or anticanonical divisors},
      YEAR = {2025},
      NOTE = {Preprint, arXiv:2507.08522},
    EPRINT = {2507.08522},
ARCHIVEPREFIX = {arXiv},
PRIMARYCLASS = {math.AG},
       URL = {https://arxiv.org/abs/2507.08522},
}

@article {JLZ25,
    AUTHOR = {Jia, Jia and Lee, Yongnam and Zhong, Guolei},
     TITLE = {Smooth projective surfaces with pseudo-effective tangent
              bundles},
   JOURNAL = {J. Math. Soc. Japan},
  FJOURNAL = {Journal of the Mathematical Society of Japan},
    VOLUME = {77},
      YEAR = {2025},
    NUMBER = {1},
     PAGES = {75--102},
      ISSN = {0025-5645,1881-1167},
   MRCLASS = {14J60 (14E30 14J26 14J27)},
  MRNUMBER = {4854769},
MRREVIEWER = {Damian\ M.\ Maingi},
       DOI = {10.2969/jmsj/91579157},
       URL = {https://doi.org/10.2969/jmsj/91579157},
}

@article {KT24,
    AUTHOR = {Kawakami, Tatsuro and Totaro, Burt},
     TITLE = {Endomorphisms of varieties and {B}ott vanishing},
   JOURNAL = {J. Algebraic Geom.},
  FJOURNAL = {Journal of Algebraic Geometry},
    VOLUME = {34},
      YEAR = {2025},
    NUMBER = {2},
     PAGES = {381--405},
      ISSN = {1056-3911,1534-7486},
  MRCLASS = {14E05 (13A35 14G17 14J45)},
  MRNUMBER = {4876293},
MRREVIEWER = {Haidong\ Liu},
       DOI = {10.1090/jag/838},
       URL = {https://doi.org/10.1090/jag/838},
}

@article {KS21,
    AUTHOR = {Kebekus, Stefan and Schnell, Christian},
     TITLE = {Extending holomorphic forms from the regular locus of a
              complex space to a resolution of singularities},
   JOURNAL = {J. Amer. Math. Soc.},
  FJOURNAL = {Journal of the American Mathematical Society},
    VOLUME = {34},
      YEAR = {2021},
    NUMBER = {2},
     PAGES = {315--368},
      ISSN = {0894-0347,1088-6834},
   MRCLASS = {32S20 (14B05 14B15)},
  MRNUMBER = {4280862},
MRREVIEWER = {Dmitry\ Kerner},
       DOI = {10.1090/jams/962},
       URL = {https://doi.org/10.1090/jams/962},
}

@article {Kol93,
    AUTHOR = {Koll\'{a}r, J\'{a}nos},
     TITLE = {Shafarevich maps and plurigenera of algebraic varieties},
   JOURNAL = {Invent. Math.},
  FJOURNAL = {Inventiones Mathematicae},
    VOLUME = {113},
      YEAR = {1993},
    NUMBER = {1},
     PAGES = {177--215},
      ISSN = {0020-9910},
   MRCLASS = {14E20 (14E30 14J10)},
  MRNUMBER = {1223229},
MRREVIEWER = {Alessio Corti},
       DOI = {10.1007/BF01244307},
       URL = {https://doi.org/10.1007/BF01244307},
}

@article {Mat23,
    AUTHOR = {Matsumura, Shin-ichi},
     TITLE = {On the minimal model program for projective varieties with
              pseudo-effective tangent sheaf},
   JOURNAL = {\'Epijournal G\'eom. Alg\'ebrique},
  FJOURNAL = {\'Epijournal de G\'eom\'etrie Alg\'ebrique. EPIGA},
    VOLUME = {7},
      YEAR = {2023},
     PAGES = {Art. 22, 13},
      ISSN = {2491-6765},
   MRCLASS = {14E30 (32J25 53C25)},
  MRNUMBER = {4671729},
       DOI = {10.46298/epiga.2023.10337},
       URL = {https://doi.org/10.46298/epiga.2023.10337},
}

@article {MQ25,
    AUTHOR = {Matsumura, Shin-ichi and Qing, Chenghao},
     TITLE = {On compact {K}\"ahler manifolds with pseudo-effective tangent
              bundle},
   JOURNAL = {Int. Math. Res. Not. IMRN},
  FJOURNAL = {International Mathematics Research Notices. IMRN},
      YEAR = {2026},
    NUMBER = {10},
     PAGES = {Paper No. rnag096},
      ISSN = {1073-7928,1687-0247},
   MRCLASS = {32Q15},
  MRNUMBER = {5073802},
       DOI = {10.1093/imrn/rnag096},
       URL = {https://doi.org/10.1093/imrn/rnag096},
}

@article {MY21,
    AUTHOR = {Matsuzawa, Yohsuke and Yoshikawa, Shou},
     TITLE = {Int-amplified endomorphisms on normal projective surfaces},
   JOURNAL = {Taiwanese J. Math.},
  FJOURNAL = {Taiwanese Journal of Mathematics},
    VOLUME = {25},
      YEAR = {2021},
    NUMBER = {4},
     PAGES = {681--697},
      ISSN = {1027-5487,2224-6851},
   MRCLASS = {14E30},
  MRNUMBER = {4298918},
MRREVIEWER = {Tatiana\ M.\ Bandman},
       DOI = {10.11650/tjm/210101},
       URL = {https://doi.org/10.11650/tjm/210101},
}

@article {Men20,
    AUTHOR = {Meng, Sheng},
     TITLE = {Building blocks of amplified endomorphisms of normal
              projective varieties},
   JOURNAL = {Math. Z.},
  FJOURNAL = {Mathematische Zeitschrift},
    VOLUME = {294},
      YEAR = {2020},
    NUMBER = {3-4},
     PAGES = {1727--1747},
      ISSN = {0025-5874,1432-1823},
   MRCLASS = {14E30 (08A35 32H50)},
  MRNUMBER = {4074056},
MRREVIEWER = {Haidong\ Liu},
       DOI = {10.1007/s00209-019-02316-7},
       URL = {https://doi.org/10.1007/s00209-019-02316-7},
}

@article {MZ19,
    AUTHOR = {Meng, Sheng and Zhang, De-Qi},
     TITLE = {Characterizations of toric varieties via polarized
              endomorphisms},
   JOURNAL = {Math. Z.},
  FJOURNAL = {Mathematische Zeitschrift},
    VOLUME = {292},
      YEAR = {2019},
    NUMBER = {3-4},
     PAGES = {1223--1231},
      ISSN = {0025-5874,1432-1823},
   MRCLASS = {14M25 (20K30 32H50)},
  MRNUMBER = {3980290},
MRREVIEWER = {Ozhan\ Genc},
       DOI = {10.1007/s00209-018-2160-8},
       URL = {https://doi.org/10.1007/s00209-018-2160-8},
}

@article {MZ22,
    AUTHOR = {Meng, Sheng and Zhang, De-Qi},
     TITLE = {Kawaguchi-{S}ilverman conjecture for certain surjective
              endomorphisms},
   JOURNAL = {Doc. Math.},
  FJOURNAL = {Documenta Mathematica},
    VOLUME = {27},
      YEAR = {2022},
     PAGES = {1605--1642},
      ISSN = {1431-0635,1431-0643},
   MRCLASS = {37P55 (08A35 14E05 14E30)},
  MRNUMBER = {4574221},
       DOI = {10.4171/dm/x13},
       URL = {https://doi.org/10.4171/dm/x13}}

@incollection {MZ23-survey,
    AUTHOR = {Meng, Sheng and Zhang, De-Qi},
     TITLE = {Advances in the {E}quivariant {M}inimal {M}odel {P}rogram and
              {T}heir {A}pplications in {C}omplex and {A}rithmetic
              {D}ynamics},
 BOOKTITLE = {Algebraic, complex, and arithmetic dynamics},
    SERIES = {Simons Symp.},
     PAGES = {99--123},
 PUBLISHER = {Springer, Cham},
      YEAR = {[2026] \copyright 2026},
      ISBN = {978-3-032-04047-3; 978-3-032-04048-0},
   MRCLASS = {99-06},
  MRNUMBER = {5098186},
       DOI = {10.1007/978-3-032-04048-0\_4},
       URL = {https://doi.org/10.1007/978-3-032-04048-0_4},
}

@misc {MZ-TJM,
    AUTHOR = {Matsumura, Shin-ichi and Zhong, Guolei},
     TITLE = {A note on compact K\"ahler varieties with pseudo-effective tangent sheaf},
      YEAR = {2026},
        NOTE = {Preprint, arXiv:2609.07069 },
    EPRINT = {2609.07069 },
ARCHIVEPREFIX = {arXiv},
PRIMARYCLASS = {math.AG},
       URL = {https://arxiv.org/abs/2609.07069 },
}

@misc {MYY24-lcy,
    AUTHOR = {Moraga, Joaqu{\'\i}n and Y{\'a}{\~n}ez, Jos{\'e} Ignacio and
              Yeong, Wern},
     TITLE = {Polarized endomorphisms of log {C}alabi--{Y}au pairs},
      YEAR = {2024},
      NOTE = {Preprint, arXiv:2406.18092},
    EPRINT = {2406.18092},
ARCHIVEPREFIX = {arXiv},
PRIMARYCLASS = {math.AG},
       URL = {https://arxiv.org/abs/2406.18092},
}

@article {Nak02,
    AUTHOR = {Nakayama, Noboru},
     TITLE = {Ruled surfaces with non-trivial surjective endomorphisms},
   JOURNAL = {Kyushu J. Math.},
  FJOURNAL = {Kyushu Journal of Mathematics},
    VOLUME = {56},
      YEAR = {2002},
    NUMBER = {2},
     PAGES = {433--446},
      ISSN = {1340-6116,1883-2032},
   MRCLASS = {14J26},
  MRNUMBER = {1934136},
MRREVIEWER = {Sandra\ Di Rocco},
       DOI = {10.2206/kyushujm.56.433},
       URL = {https://doi.org/10.2206/kyushujm.56.433},
}

@article {NZ10,
    AUTHOR = {Nakayama, Noboru and Zhang, De-Qi},
     TITLE = {Polarized endomorphisms of complex normal varieties},
   JOURNAL = {Math. Ann.},
  FJOURNAL = {Mathematische Annalen},
    VOLUME = {346},
      YEAR = {2010},
    NUMBER = {4},
     PAGES = {991--1018},
      ISSN = {0025-5831,1432-1807},
   MRCLASS = {14J10 (14H20 32H50)},
  MRNUMBER = {2587100},
MRREVIEWER = {Karl\ Schwede},
       DOI = {10.1007/s00208-009-0420-y},
       URL = {https://doi.org/10.1007/s00208-009-0420-y},
}

@misc {Ou22,
    AUTHOR = {Ou, Wenhao},
     TITLE = {Admissible metrics on compact {K}\"ahler varieties},
      YEAR = {2022},
      NOTE = {Preprint, arXiv:2201.04821},
    EPRINT = {2201.04821},
ARCHIVEPREFIX = {arXiv},
PRIMARYCLASS = {math.AG},
       URL = {https://arxiv.org/abs/2201.04821},
}

@misc {Ou25,
    AUTHOR = {Ou, Wenhao},
     TITLE = {A characterization of uniruled compact {K}\"ahler manifolds},
      YEAR = {2025},
      NOTE = {Preprint, arXiv:2501.18088},
    EPRINT = {2501.18088},
ARCHIVEPREFIX = {arXiv},
PRIMARYCLASS = {math.AG},
       URL = {https://arxiv.org/abs/2501.18088},
}

@misc {FO25,
    AUTHOR = {Fu, Xin and Ou, Wenhao},
     TITLE = {Orbifold {B}ogomolov--{G}ieseker inequalities on compact
              {K}\"ahler varieties},
      YEAR = {2025},
      NOTE = {Preprint, arXiv:2511.03530},
    EPRINT = {2511.03530},
ARCHIVEPREFIX = {arXiv},
PRIMARYCLASS = {math.DG},
       URL = {https://arxiv.org/abs/2511.03530},
}

@article {PS89,
    AUTHOR = {Paranjape, K. H. and Srinivas, V.},
     TITLE = {Self-maps of homogeneous spaces},
   JOURNAL = {Invent. Math.},
  FJOURNAL = {Inventiones Mathematicae},
    VOLUME = {98},
      YEAR = {1989},
    NUMBER = {2},
     PAGES = {425--444},
      ISSN = {0020-9910,1432-1297},
   MRCLASS = {14M17 (20G20)},
  MRNUMBER = {1016272},
MRREVIEWER = {Vladimir\ L.\ Popov},
       DOI = {10.1007/BF01388861},
       URL = {https://doi.org/10.1007/BF01388861},
}

@article {PT18,
    AUTHOR = {P\u{a}un, Mihai and Takayama, Shigeharu},
     TITLE = {Positivity of twisted relative pluricanonical bundles and their direct images},
   JOURNAL = {J. Algebraic Geom.},
  FJOURNAL = {Journal of Algebraic Geometry},
    VOLUME = {27},
      YEAR = {2018},
    NUMBER = {2},
     PAGES = {211--272},
      ISSN = {1056-3911},
   MRCLASS = {32L05 (32A25)},
  MRNUMBER = {3764276},
MRREVIEWER = {Severin Barmeier},
       DOI = {10.1090/jag/702},
       URL = {https://doi.org/10.1090/jag/702},
}

@book {Ser97,
    AUTHOR = {Serre, Jean-Pierre},
     TITLE = {Galois cohomology},
      NOTE = {Translated from the French by Patrick Ion and revised by the
              author},
 PUBLISHER = {Springer-Verlag, Berlin},
      YEAR = {1997},
     PAGES = {x+210},
      ISBN = {3-540-61990-9},
   MRCLASS = {12G05 (11R34)},
  MRNUMBER = {1466966},
       DOI = {10.1007/978-3-642-59141-9},
       URL = {https://doi.org/10.1007/978-3-642-59141-9},
}

@article {SZ25,
    AUTHOR = {Shao, Feng and Zhong, Guolei},
     TITLE = {Bigness of tangent bundles and dynamical rigidity of {F}ano
              manifolds of {P}icard number 1 (with an appendix by {J}ie
              {L}iu)},
   JOURNAL = {Math. Ann.},
  FJOURNAL = {Mathematische Annalen},
    VOLUME = {391},
      YEAR = {2025},
    NUMBER = {2},
     PAGES = {1731--1752},
      ISSN = {0025-5831,1432-1807},
   MRCLASS = {14J40 (14J45)},
  MRNUMBER = {4853004},
       DOI = {10.1007/s00208-024-02955-0},
       URL = {https://doi.org/10.1007/s00208-024-02955-0},
}

@article {Shi25,
    AUTHOR = {Shibata, Takahiro},
     TITLE = {Q-abelian and  {$\mathbb{Q}$}-{F}ano finite quotients of abelian
              varieties},
   JOURNAL = {Kyushu J. Math.},
  FJOURNAL = {Kyushu Journal of Mathematics},
    VOLUME = {79},
      YEAR = {2025},
    NUMBER = {2},
     PAGES = {397--412},
      ISSN = {1340-6116,1883-2032},
   MRCLASS = {14},
  MRNUMBER = {5078215},
}

@article {Tak03,
    AUTHOR = {Takayama, Shigeharu},
     TITLE = {Local simple connectedness of resolutions of log-terminal
              singularities},
   JOURNAL = {Internat. J. Math.},
  FJOURNAL = {International Journal of Mathematics},
    VOLUME = {14},
      YEAR = {2003},
    NUMBER = {8},
     PAGES = {825--836},
      ISSN = {0129-167X,1793-6519},
   MRCLASS = {14E30 (14B05 14E20 32J25)},
  MRNUMBER = {2013147},
MRREVIEWER = {Massimiliano\ Mella},
       DOI = {10.1142/S0129167X0300196X},
       URL = {https://doi.org/10.1142/S0129167X0300196X},
}

@article {Wang21,
    AUTHOR = {Wang, Juanyong},
     TITLE = {On the {I}itaka conjecture {$C_{n,m}$} for {K}\"{a}hler fibre
              spaces},
   JOURNAL = {Ann. Fac. Sci. Toulouse Math. (6)},
  FJOURNAL = {Annales de la Facult\'{e} des Sciences de Toulouse. Math\'{e}matiques.
              S\'{e}rie 6},
    VOLUME = {30},
      YEAR = {2021},
    NUMBER = {4},
     PAGES = {813--897},
      ISSN = {0240-2963},
   MRCLASS = {32Q15 (14E30 32C15 53C55)},
  MRNUMBER = {4350100},
MRREVIEWER = {Rare\c{s} R\u{a}sdeaconu},
       DOI = {10.5802/afst.1690},
       URL = {https://doi.org/10.5802/afst.1690},
}

@misc {Xia26,
    AUTHOR = {Xia, Mingchen},
     TITLE = {Transcendental {$b$}-divisors {II}---monotonicity theorem},
      YEAR = {2026},
      NOTE = {Preprint, arXiv:2603.14362},
    EPRINT = {2603.14362},
ARCHIVEPREFIX = {arXiv},
PRIMARYCLASS = {math.AG},
       URL = {https://arxiv.org/abs/2603.14362},
}

@article {Xu14,
    AUTHOR = {Xu, Chenyang},
     TITLE = {Finiteness of algebraic fundamental groups},
   JOURNAL = {Compos. Math.},
  FJOURNAL = {Compositio Mathematica},
    VOLUME = {150},
      YEAR = {2014},
    NUMBER = {3},
     PAGES = {409--414},
      ISSN = {0010-437X,1570-5846},
   MRCLASS = {14J17 (14J45)},
  MRNUMBER = {3187625},
MRREVIEWER = {Andrzej\ Kozlowski},
       DOI = {10.1112/S0010437X13007562},
       URL = {https://doi.org/10.1112/S0010437X13007562},
}

@article {Yos21,
    AUTHOR = {Yoshikawa, Shou},
     TITLE = {Structure of {F}ano fibrations of varieties admitting an
              int-amplified endomorphism},
   JOURNAL = {Adv. Math.},
  FJOURNAL = {Advances in Mathematics},
    VOLUME = {391},
      YEAR = {2021},
     PAGES = {Paper No. 107964, 32},
      ISSN = {0001-8708,1090-2082},
   MRCLASS = {14E30 (14D06 14J45 37F99)},
  MRNUMBER = {4305238},
MRREVIEWER = {Sho\ Tanimoto},
       DOI = {10.1016/j.aim.2021.107964},
       URL = {https://doi.org/10.1016/j.aim.2021.107964},
}

@incollection {ZhaSW06,
    AUTHOR = {Zhang, Shou-Wu},
     TITLE = {Distributions in algebraic dynamics},
 BOOKTITLE = {Surveys in differential geometry. {V}ol. {X}},
    SERIES = {Surv. Differ. Geom.},
    VOLUME = {10},
     PAGES = {381--430},
 PUBLISHER = {Int. Press, Somerville, MA},
      YEAR = {2006},
      ISBN = {978-1-57146-116-2; 1-57146-116-7},
   MRCLASS = {32H50 (11G50 32J27 37F10)},
  MRNUMBER = {2408228},
MRREVIEWER = {Matthew\ H.\ Baker},
       DOI = {10.4310/SDG.2005.v10.n1.a9},
       URL = {https://doi.org/10.4310/SDG.2005.v10.n1.a9},
}

@article {Zha14,
    AUTHOR = {Zhang, De-Qi},
     TITLE = {Invariant hypersurfaces of endomorphisms of projective
              varieties},
   JOURNAL = {Adv. Math.},
  FJOURNAL = {Advances in Mathematics},
    VOLUME = {252},
      YEAR = {2014},
     PAGES = {185--203},
      ISSN = {0001-8708,1090-2082},
   MRCLASS = {14B05 (14C20 14M22 32H50 37F10)},
  MRNUMBER = {3144228},
MRREVIEWER = {Mattias\ Jonsson},
       DOI = {10.1016/j.aim.2013.10.014},
       URL = {https://doi.org/10.1016/j.aim.2013.10.014},
}

@article {Zho21-Asian,
    AUTHOR = {Zhong, Guolei},
     TITLE = {Int-amplified endomorphisms of compact {K}\"ahler spaces},
   JOURNAL = {Asian J. Math.},
  FJOURNAL = {Asian Journal of Mathematics},
    VOLUME = {25},
      YEAR = {2021},
    NUMBER = {3},
     PAGES = {369--392},
      ISSN = {1093-6106,1945-0036},
   MRCLASS = {14E30 (14J50)},
  MRNUMBER = {4395605},
MRREVIEWER = {Chen\ Jiang},
       DOI = {10.4310/AJM.2021.v25.n3.a3},
       URL = {https://doi.org/10.4310/AJM.2021.v25.n3.a3},
}

@article {Zho23,
    AUTHOR = {Zhong, Guolei},
     TITLE = {Existence of the equivariant minimal model program for compact
              {K}\"ahler threefolds with the action of an abelian group of
              maximal rank},
   JOURNAL = {Math. Nachr.},
  FJOURNAL = {Mathematische Nachrichten},
    VOLUME = {296},
      YEAR = {2023},
    NUMBER = {7},
     PAGES = {3128--3135},
      ISSN = {0025-584X,1522-2616},
   MRCLASS = {14E30 (14J30 14J50)},
  MRNUMBER = {4626875},
MRREVIEWER = {G.\ K.\ Sankaran},
       DOI = {10.1002/mana.202200127},
       URL = {https://doi.org/10.1002/mana.202200127},
}

\end{document}